\documentclass[11pt,a4paper,oneside,reqno]{amsart}
\usepackage[utf8]{inputenc}

\usepackage{amsmath,amscd}
\usepackage{amssymb}
\usepackage{amsthm}
\usepackage{comment}
\usepackage{xcolor}

\usepackage{mathrsfs}
\usepackage{hyperref}

\usepackage{graphicx}
\usepackage{psfrag}
\newcommand{\Mi}{M^{\textrm{int}}}

\newcommand{\p}{\partial}

\usepackage{epstopdf}

\newtheorem{Theorem}{Theorem}[section]

\newtheorem{Lemma}[Theorem]{Lemma}

\newtheorem{Definition}[Theorem]{Definition}

\newtheorem{Proposition}[Theorem]{Proposition}

\newcommand{\abs}[1]{\lvert #1 \rvert} 
\newcommand{\norm}[1]{\lVert #1 \rVert}



\newcommand{\R}{\mathbb{R}}                    
\newcommand{\Z}{\mathbb{Z}}                    
\newcommand{\N}{\mathbb{N}}                    

\newcommand{\eps}{\varepsilon}
\newcommand{\tg}{\tilde{g}}
\newcommand{\ol}[1]{\overline{#1}}
\renewcommand{\a}{\alpha}

\newcommand{\mL}{\mathcal{L}}

\title{Conformal harmonic coordinates}

\author{Matti Lassas}
\address{Department of Mathematics and Statistics, University of Helsinki}
\email{matti.lassas@helsinki.fi}

\author[T. Liimatainen]{Tony Liimatainen}
\address{Department of Mathematics and Statistics, University of Jyv\"askyl\"a, \newline \phantom{h\,\,} Department of Mathematics and Statistics, University of Helsinki}
\email{tony.liimatainen@helsinki.fi}
%
%

\begin{document}

\begin{abstract}
We study conformal harmonic coordinates on Riemannian manifolds. These are coordinates constructed as quotients of solutions to the conformal Laplace equation. We show their existence under general conditions. We find that conformal harmonic coordinates are a close conformal analogue of harmonic coordinates.  
We prove up to boundary regularity results for conformal mappings. We show that Weyl, Cotton, Bach, and Fefferman-Graham obstruction tensors become elliptic operators in conformal harmonic coordinates if one normalizes the determinant of the metric. We give a corresponding elliptic regularity result, which includes an analytic case. We prove a unique continuation result for local conformal flatness for Bach and obstruction flat manifolds. We discuss and prove existence of conformal harmonic coordinates on Lorentzian manifolds. We prove unique continuation results for conformal mappings both on Riemannian and Lorentzian manifolds. 

\end{abstract}

\maketitle

\section{Introduction}
In this article we study a coordinate system called \emph{conformal harmonic coordinates} on Riemannian manifolds. 
Conformal harmonic coordinates were invented in a recent joint paper~\cite{LLS} by the authors. These coordinates are conformally invariant and they have similar properties to harmonic coordinates. 
One of the main motivations of this paper is to better understand these coordinates. 
This is done by proving new results in conformal geometry and by 
giving new proofs of some earlier results. The results concern conformal mappings and conformal curvature tensors such as Weyl and  Bach tensors. 
We prove the existence and regularity of conformal harmonic coordinates and characterize them. We also initiate the study of conformal harmonic coordinates on Lorentzian manifolds.

Let $(M,g)$ be a Riemannian manifold. The standard harmonic coordinates on the manifold $M$ are defined as a coordinate system $(x^1,\ldots,x^n)$ with the property that the coordinate functions $x^k$ satisfy
\[
 \Delta_gx^k=0.
\]
Here $\Delta_g$ is the Laplace-Beltrami operator $\Delta_gu=-|g|^{-1/2}\p_a\left(|g|^{1/2}g^{ab}\p_bu\right)$. We use Einstein summation convention where repeated indices are summed over. 
Harmonic coordinates have several uses in Riemannian geometry. 
In harmonic coordinates Ricci and Riemann tensors can be regarded as elliptic operators~\cite{DK}. The DeTurck trick to prove local wellposedness of the Ricci flow by fixing the diffeomorphism invariance of the flow is also related to harmonic coordinates~\cite{ChowKnopf, GrahamLee}. Harmonic coordinates have been used to study Cheeger-Gromov convergence of manifolds~\cite{JK82, Jo84, Pe87, GW88, AKKLT04}, unique continuation of Ricci curvature~\cite{GuSaBar, AH08}, geometric inverse problems~\cite{GuSaBar, LLS18} and regularity of isometries and local flatness of low regularity metrics~\cite{Ta06}.  
The notion of harmonic coordinates goes back to Einstein~\cite{Einstein} and to studies of Lorentzian geometry in general relativity. In Einstein's general relativity harmonic coordinates have for example been used to study gravitational waves and wellposedness of the Einstein equations~\cite{Wa84, Ch09}. A comprehensive list of all the applications harmonic coordinates would be quite long.

The conformal Laplacian on a Riemannian manifold $(M,g)$ of dimension $n \geq 3$ is the operator
\[
 L_g = \Delta_g + \frac{n-2}{4(n-1)}R(g).
\]
Here $\Delta_g$ is as before and $R(g)$ is the scalar curvature of the Riemannian metric $g$. The scalar curvature acts on functions by multiplication. The conformal Laplacian is conformally invariant in the sense that if $c$ is a positive function, then 
\begin{equation}
 L_{cg} u = c^{-\frac{n+2}{4}} L_g (c^{\frac{n-2}{4}} u).
\end{equation}
We refer to~\cite{LeeParker} for more properties of the conformal Laplacian.

We define conformal harmonic coordinates as quotients of solutions to the conformal Laplace equation: We say that coordinates $(Z^1,\ldots,Z^n)$ are conformal harmonic coordinates if the coordinate functions $Z^k$ are of the form
\begin{equation}\label{quotient_form}
 Z^k=\frac{f^k}{f},
\end{equation}
where $f^k$ and $f>0$ are any functions that satisfy the conformal Laplace equation
\[
 L_gf^1=\cdots=L_gf^n=L_gf=0.
\]
Conformal harmonic coordinates are conformally invariant. If $c$ is a positive function then coordinates $(Z^1,\ldots,Z^n)$ of the form~\eqref{quotient_form} satisfy the definition of conformal harmonic coordinates for both $g$ and $cg$. 
See Proposition~\ref{Z_coords_conformally_invariant}. 
Conformal harmonic coordinates were invented in~\cite{LLS} (called $Z$-coordinates in the paper) to serve as a tool to solve a conformal Calder\'on inverse conductivity problem. The beneficial property of conformal harmonic coordinates used in the paper is that these coordinates are real analytic for any manifold that is locally conformal to a real analytic manifold. 

We prove existence and regularity properties of conformal harmonic coordinates on general Riemannian manifolds in Theorem~\ref{Z-coord}, and in Theorem~\ref{Z-coord_boundary} for manifolds with boundary. These theorems state that on a neighborhood of any point on a $C^r$ smooth, $r>2$ and $r\notin \mathbb{Z}$, Riemannian manifold exists $C^{r+1}$ smooth conformal harmonic coordinates. 
If the manifold is real analytic $C^\omega$, then conformal harmonic coordinates are real analytic. The smoothness (we also use the term regularity) of conformal harmonic coordinates is conformally invariant by Proposition~\ref{Z_coords_conformally_invariant}.

We characterize conformal harmonic coordinates in Propositions~\ref{equivalence}$-$~\ref{prop:isothermal}. We notice that conformal harmonic coordinates are harmonic coordinates for a conformal metric whose scalar curvature vanishes. The conformal metric is achieved by solving a Yamabe problem locally for zero Yamabe constant. This is equivalent to finding a local positive solution $f>0$ to the linear equation $L_gf=0$. We however find that the formulation~\eqref{quotient_form} of conformal harmonic coordinates as quotients of solutions leads to new applications. The reason behind this is that harmonic coordinates for a zero scalar curvature metric are not conformally invariant. The quotient formulation also naturally generalizes isothermal coordinates. We show that conformal harmonic coordinates are characterized as a coordinate system where the contracted Christoffel symbols $\Gamma^a=g^{bc}\Gamma_{bc}^a$ and $\Gamma_a=g_{ab}\Gamma^b$ satisfy the coordinate condition
\[
 \Gamma_a=2\p_a\log f,
\]
where $f>0$ solves $L_gf=0$. A feature of this coordinate condition is that the function $f$ in the condition depends non-locally on $g$. This causes some additional considerations in some applications we present.  

 We give applications of conformal harmonic coordinates. 
 We first prove an up to boundary regularity result for conformal mappings.
\begin{Theorem}
Let $(M,g)$ and $(N,h)$ be $n$-dimensional, $n\geq 3$, Riemannian manifolds possibly with boundaries. Assume that the Riemannian metric tensors $g$ and $h$ are $C^{k+\alpha}$, $C^\infty$ or $C^\omega$ smooth, 
with $k\in \N$, $k\geq 2$ and $\alpha\in (0,1)$. 
%
Let $F: M \to N$ be $C^3$ smooth locally diffeomorphic conformal mapping,
\[
F^* h = c\,g \text{ in } M,
\]
where $c$ is a function in $M$. If $\p M\neq \emptyset$, we also assume $\p N\neq \emptyset$, that $g$ and $h$ are $C^{k+\alpha}$, $C^\infty$ or $C^\omega$ up to boundary, $F$ is $C^3$ up to boundary and that $F(\p M)\subset \p N$. Then $F$ and its local inverse are in $C^{k+1+\alpha}$, $C^\infty$ or $C^\omega$ respectively, up to boundary if $\p M \neq \emptyset$.


\end{Theorem}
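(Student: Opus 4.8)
The plan is to exploit the conformal invariance of conformal harmonic coordinates (Proposition~\ref{Z_coords_conformally_invariant}) to recast $F$ as a local isometry between two metrics of vanishing scalar curvature, and then to run the harmonic--coordinate elliptic bootstrap. Fix $p\in M$ and $q=F(p)\in N$. By the existence theory (Theorem~\ref{Z-coord}, and Theorem~\ref{Z-coord_boundary} near $\partial N$) there are, on a neighborhood of $q$, a positive solution $v>0$ and solutions $v^1,\dots,v^n$ of $L_hv=L_hv^j=0$, giving conformal harmonic coordinates $Z=(Z^1,\dots,Z^n)$ with $Z^j=v^j/v$; by Propositions~\ref{equivalence}--\ref{prop:isothermal} these are ordinary harmonic coordinates for the zero scalar curvature metric $\tilde h=v^{4/(n-2)}h$. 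The central observation is that $Z\circ F$ is again of quotient form for $g$. Since $F^*h=c\,g$, naturality gives $L_{cg}(u\circ F)=(L_hu)\circ F$, and combined with the transformation law $L_{cg}w=c^{-\frac{n+2}{4}}L_g\!\left(c^{\frac{n-2}{4}}w\right)$ this shows that
\[
q^j:=c^{\frac{n-2}{4}}\,(v^j\circ F),\qquad q:=c^{\frac{n-2}{4}}\,(v\circ F)
\]
satisfy $L_gq^j=L_gq=0$ and $Z^j\circ F=q^j/q$. Hence $Z\circ F$ are conformal harmonic coordinates for $g$, with denominator $q$, and are therefore harmonic for $\tilde g:=q^{4/(n-2)}g$. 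A direct computation gives $F^*\tilde h=(v\circ F)^{4/(n-2)}c\,g=\tilde g$, so $F\colon(M,\tilde g)\to(N,\tilde h)$ is a local isometry between metrics of vanishing scalar curvature.

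The regularity then improves by an elliptic bootstrap, and it is essential that no circularity occurs. A priori $q$ is only $C^2$, since $F\in C^3$ forces $c\in C^2$ and $v\circ F\in C^3$; but $q$ solves $L_gq=0$, an equation whose coefficients depend only on $g\in C^{k+\alpha}$ and not on $F$. The elliptic regularity underlying Theorem~\ref{Z-coord} therefore upgrades $q$ (and likewise $q^j$) to $C^{k+\alpha}$, which is best possible for $L_g$ since its zeroth order coefficient $R(g)$ is only $C^{k-2+\alpha}$; in particular $\tilde g=q^{4/(n-2)}g\in C^{k+\alpha}$ and $Z\circ F=q^j/q\in C^{k+\alpha}$. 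Working now in a local chart in which $g$, and hence $\tilde g$, has $C^{k+\alpha}$ components, the components of $Z\circ F$ satisfy the harmonic equation $\Delta_{\tilde g}(Z^j\circ F)=0$: this is elliptic with $C^{k+\alpha}$ leading coefficients and, crucially, no zeroth order term, so the Schauder bootstrap gains one full derivative and yields $Z^j\circ F\in C^{k+1+\alpha}$. Since the chart $Z^{-1}$ is $C^{k+1+\alpha}$ by Theorem~\ref{Z-coord}, the map $F=Z^{-1}\circ(Z\circ F)$ is $C^{k+1+\alpha}$, and the local inverse is treated by exchanging the roles of $M$ and $N$. In the $C^\infty$ case one iterates over all $k$, and in the $C^\omega$ case one replaces Schauder estimates by analytic hypoellipticity of $L_g$ and of $\Delta_{\tilde g}$, making $q$, $\tilde g$, and then $Z^j\circ F$ and $F$ real analytic.

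I expect the boundary case to be the main obstacle. The interior scheme carries over using the up-to-boundary conformal harmonic coordinates of Theorem~\ref{Z-coord_boundary}, and the hypotheses $\partial N\neq\emptyset$ and $F(\partial M)\subset\partial N$ are exactly what is needed so that $F$ sends boundary charts to boundary charts and its coordinate representation preserves the boundary hyperplane. The difficulty is that interior harmonicity of $Z^j\circ F$ no longer suffices: one must supply the correct boundary data, namely the restriction of the regular boundary coordinate functions on $\partial N$ pulled back to $\partial M$, and replace interior Schauder estimates by up-to-boundary Schauder estimates for $\Delta_{\tilde g}(Z^j\circ F)=0$. Once this up-to-boundary gain of one derivative is established, the same argument delivers the stated $C^{k+1+\alpha}$, $C^\infty$, or $C^\omega$ regularity of $F$ and its local inverse up to $\partial M$.
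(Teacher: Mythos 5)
Your interior argument is correct and is essentially the paper's proof: the key step in both is that $Z\circ F$ is again of quotient form because $L_g\bigl(c^{\frac{n-2}{4}}F^*u\bigr)=c^{\frac{n-2}{4}}F^*(L_hu)$ (Lemma~\ref{conf_preserves_Z}), so that $F=Z^{-1}\circ(Z\circ F)$ is a composition of conformal harmonic coordinate charts. Where the paper simply invokes part (2) of Theorem~\ref{Z-coord} (\emph{all} conformal harmonic coordinates are $C^{r+1}$), you unpack the same two-step bootstrap — first upgrade the denominator $q$ via $L_gq=0$, then gain one derivative from $\Delta_{\tilde g}(q^j/q)=0$ — which is exactly the content of equation~\eqref{additional_regularity}; your extra observation that $F^*\tilde h=\tilde g$ is a nice reformulation but not needed.

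The boundary case, however, is left with a genuine gap, and it is precisely the step you defer. To run up-to-boundary Schauder for $\Delta_{\tilde g}(Z^j\circ F)=0$ you need the Dirichlet data $(Z^j\circ F)|_{\p M}$ to be $C^{k+1+\alpha}$, but this trace factors through $F|_{\p M}$, which a priori is only $C^3$; choosing $Z$ with prescribed $C^{r+1}$ boundary restriction (Theorem~\ref{Z-coord_boundary}(3)) does not help, since the pullback still composes with $F|_{\p M}$. Saying "supply the restriction of the regular boundary coordinates pulled back to $\p M$" therefore assumes what must be proved. The missing idea in the paper's proof of Theorem~\ref{reg_of_conformal_maps} is to observe that $\overline F:=F|_{\p M}\colon(\p M,g|_{\p M})\to(\p N,h|_{\p N})$ is itself a locally diffeomorphic conformal map between boundaryless Riemannian manifolds with $C^{k+\alpha}$ induced metrics, so the already-established interior case applies to $\overline F$ and yields $\overline F\in C^{k+1+\alpha}$; only then is $W|_{\p M}=Z\circ\overline F$ regular enough for part (2) of Theorem~\ref{Z-coord_boundary} to give $W\in C^{k+1+\alpha}$ up to the boundary. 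Without this (or some substitute), your boundary argument does not close.
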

After that we show a unique continuation result for conformal mappings. 
\begin{Theorem}
 Let $(M,g)$ and $(N,h)$ be connected $n$-dimensional Riemannian manifolds, $n\geq 3$. Assume that $F_1$ and $F_2$ are two $C^3$ locally diffeomorphic conformal mappings $M \to N$, 
 $F_j^*h=c_jg$, $j=1,2$, where $c_j$ are functions in $M$. If $M$ has a boundary, we also assume that $F_1$ and $F_2$ are $C^3$ up to $\p M$ and that $F_1(\p M)\subset \p N$.
 
 Assume either that $F_1=F_2$ on an open subset of $M$ or that $F_1$ and $F_2$ agree to second order on an open subset $\Gamma\subset \p M$, if $\p M \neq \emptyset$. Then 
 \[
  F_1=F_2 \text{ on } M.
 \]
 
\end{Theorem}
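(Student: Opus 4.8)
The plan is to exploit that a conformal map pulls back solutions of the conformal Laplace equation on $N$ to solutions on $M$, thereby reducing the statement to a unique continuation property for the single fixed linear elliptic operator $L_g$ on $M$. Fix a point $p_0$ at which $F_1$ and $F_2$ agree (an interior point in the first case, a point of $\Gamma$ in the second), set $y_0:=F_1(p_0)=F_2(p_0)$, and choose by Theorem~\ref{Z-coord} conformal harmonic coordinates $Z=(Z^1,\dots,Z^n)=(f^1/f,\dots,f^n/f)$ on a chart $V\ni y_0$ in $N$, where $L_hf=L_hf^k=0$ and $f>0$. For a local diffeomorphism $F$ with $F^*h=c\,g$, naturality of the conformal Laplacian under diffeomorphisms gives $L_{cg}(u\circ F)=(L_hu)\circ F$, so combining with the conformal invariance $L_{cg}w=c^{-\frac{n+2}{4}}L_g(c^{\frac{n-2}{4}}w)$ yields $L_g\bigl(c^{\frac{n-2}{4}}(u\circ F)\bigr)=c^{\frac{n+2}{4}}\,(L_hu)\circ F$. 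Hence, writing $v_j:=c_j^{\frac{n-2}{4}}(f\circ F_j)>0$ and $v_j^k:=c_j^{\frac{n-2}{4}}(f^k\circ F_j)$ on $F_j^{-1}(V)$, all of these functions solve $L_g(\,\cdot\,)=0$, and $Z^k\circ F_j=v_j^k/v_j$.

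\emph{Interior case.} Let $S\subset M$ be the set of points near which $F_1=F_2$; it is open and, by hypothesis, nonempty, so it suffices to show it is closed. Let $p\in\overline S$; by continuity $F_1(p)=F_2(p)=:y$. Pick conformal harmonic coordinates on a chart $V\ni y$ and a connected neighborhood $U\ni p$ with $F_1(U),F_2(U)\subset V$. On the nonempty open set $S\cap U$ we have $F_1=F_2$, hence $c_1g=F_1^*h=F_2^*h=c_2g$ forces $c_1=c_2$, so $v_1=v_2$ and $v_1^k=v_2^k$ there. Since $v_1-v_2$ and $v_1^k-v_2^k$ solve $L_g(\,\cdot\,)=0$ on the connected set $U$ and vanish on a nonempty open subset, Aronszajn's unique continuation theorem (applicable as the $C^{k+\alpha}$, $k\ge 2$, metric has Lipschitz principal part) gives $v_1=v_2$ and $v_1^k=v_2^k$ on all of $U$. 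As $v_j>0$, this yields $Z^k\circ F_1=Z^k\circ F_2$ on $U$, and injectivity of the coordinate map $Z$ gives $F_1=F_2$ on $U$, so $p\in S$. Thus $S=M$.

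\emph{Boundary case.} Here I would first produce an interior open set of agreement and then invoke the previous paragraph. On $\Gamma$ the value $v_j|_\Gamma$ is determined by the $1$-jet of $F_j$ (since $c_j$ depends on $dF_j$), while the normal derivative $\p_\nu v_j|_\Gamma$ is determined by the $2$-jet of $F_j$ (since $\p_\nu c_j$ involves the second derivatives of $F_j$). Therefore agreement of $F_1,F_2$ to second order on $\Gamma$ forces $v_1=v_2$, $\p_\nu v_1=\p_\nu v_2$, and likewise for $v_j^k$ on $\Gamma$, i.e.\ the differences $w:=v_1-v_2$ and $w^k:=v_1^k-v_2^k$ have vanishing Cauchy data on $\Gamma$. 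Extending $g$ across $\Gamma$ to a slightly larger manifold and extending $w,w^k$ by zero, the vanishing Cauchy data makes the zero extension a weak, hence by elliptic regularity strong, solution of $L_g(\,\cdot\,)=0$ across $\Gamma$. This solution vanishes on the added open set, so interior unique continuation forces $w=w^k=0$ near $\Gamma$; as before this gives $F_1=F_2$ on an interior open set, and the interior case concludes $F_1=F_2$ on $M$.

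\emph{Main obstacle.} The delicate point is the boundary case: verifying that second-order agreement of the maps translates into matching Cauchy data for the auxiliary solutions, and establishing unique continuation from $\Gamma$. I expect the cleanest route to be the zero-extension reduction above, which converts boundary unique continuation into interior unique continuation, keeping the whole argument within the scope of Aronszajn's theorem and requiring only that the metrics be $C^{k+\alpha}$ with $k\ge 2$ up to the boundary, the regularity already furnished by the preceding theorem.
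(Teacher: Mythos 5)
Your proof is correct and follows essentially the same route as the paper's: pull back the conformal harmonic coordinate functions $f^k/f$ through $F_j$, observe that the rescaled numerators and denominators $c_j^{(n-2)/4}F_j^*f^k$ and $c_j^{(n-2)/4}F_j^*f$ solve $L_g(\,\cdot\,)=0$, and conclude by unique continuation for $L_g$ (from a nonempty open set in the interior case, from matching Cauchy data on $\Gamma$ in the boundary case) together with the open-closed argument on the connected manifold $M$. The only difference is that you spell out the boundary unique continuation step via the zero-extension reduction to interior unique continuation, a step the paper leaves implicit under the phrase ``elliptic unique continuation.''
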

Here we regard that two mappings agree to second order on a set if the zeroth, first and second derivatives of the mappings are the same on the set.

The regularity result extends the results in the interior, see~\cite{LS1} and references therein. The latter result has also been studied in~\cite{Pa95}, but we give a new simple proof and include a unique continuation from the boundary result. Both of these results are based on the fact that conformal harmonic coordinates are preserved by conformal mappings.

We then continue by studying Bach, Fefferman-Graham obstruction, Cotton and Weyl conformal curvature tensors. These tensors are conformally invariant, which consequently fail to be elliptic unless both the coordinate and conformal invariances are properly fixed. 
On an even $n\geq 4$ dimensional manifold, the Fefferman-Graham obstruction tensors $\mathcal{O}=\mathcal{O}_{(n)}$ (or simply obstruction tensors) satisfy 
\begin{equation*}
 \mathcal{O}_{ab}=\frac{1}{n-3}\Delta^{n/2-2}B_{ab}+\text{lower order terms}.
\end{equation*}
If $(M,g)$ is a smooth Riemannian manifold, we show that by choosing conformal harmonic coordinates the conformal curvature tensors mentioned above become elliptic operators for the conformal metric $\abs{g}^{-1/n}g$. We call $\abs{g}^{-1/n}g$ the \emph{determinant normalized metric}. The choice of conformal harmonic coordinates and normalizing the determinant of the metric may be viewed as a local gauge condition that results in elliptic equations for conformal curvature tensors.  
In fact, we calculate in Proposition~\ref{symbol_for_Bach} that in this local gauge the principal symbol of the obstruction tensor $\mathcal{O}_{(n)}$ is just the scalar 
\[
 -\frac{1}{2(n-3)}\abs{\xi}^n.
\]
We prove the corresponding elliptic regularity result.
\begin{Theorem}
Let $(M,g)$ be an $n$-dimensional Riemannian manifold without boundary, and let $g \in C^r$, in some system of local coordinates. 
\begin{enumerate}
\item[(a)]
If $n \geq 4$, $r>2$, and if $W_{abc\phantom{d}}^{\phantom{abc}d}$ is in $C^{s}$, $C^\infty$ or $C^\omega$, for some $s> r-2$, $s\notin \Z$, in conformal harmonic coordinates, then $\abs{g}^{-1/n} g_{ab}$ is in $C^{s+2}$, $C^\infty$ or $C^\omega$ respectively in these coordinates. 

\item[(b)]
If $n = 3$, $r>2$, and if $C_{abc}$ is in $C^{s}$, $C^\infty$ or $C^\omega$, for some $s > r-3$, $s\notin \Z$, in conformal harmonic coordinates, then $\abs{g}^{-1/n} g_{ab}$ is in $C^{s+3}$, $C^\infty$ or $C^\omega$ respectively in these coordinates.

\item[(c)]
If $n \geq 4$ is even, $r > n-1$, and if $\abs{g}^{\frac{n-2}{2n}}\mathcal{O}_{ab}$ is in $C^s$, $C^\infty$ or $C^\omega$, for some $s > r-n$, $s\notin \Z$, in conformal harmonic coordinates, then $\abs{g}^{-1/n} g_{ab}$ is in $C^{s+n}$, $C^\infty$ or $C^\omega$ respectively in these coordinates. Here $\mathcal{O}=\mathcal{O}_{(n)}$.

\end{enumerate}
Especially, if a tensor $W$, $C$ or $\mathcal{O}$ vanishes, then $\abs{g}^{-1/n}g$ is real analytic $C^\omega$ in conformal harmonic coordinates.
\end{Theorem}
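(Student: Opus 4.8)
The plan is to rewrite each of the three tensor equations as a (determined or overdetermined) elliptic system for the determinant-normalized metric $\tilde g:=\abs{g}^{-1/n}g$ and then run a Schauder bootstrap. The starting observation, which underlies Proposition~\ref{symbol_for_Bach} and the corresponding symbol computations for $W$ and $C$, is that $\tilde g$ is a conformal invariant, since $\abs{cg}^{-1/n}(cg)=\abs{g}^{-1/n}g$ for every $c>0$. Hence the conformally invariant quantities---the Weyl tensor, the Cotton tensor $C_{abc}$, and the weight-zero combination $\abs{g}^{(n-2)/(2n)}\mathcal{O}_{ab}$---are local differential expressions in $\tilde g$ alone, of orders $2$, $3$ and $n$ respectively. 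By Propositions~\ref{equivalence}--\ref{prop:isothermal} applied to the representative $\tilde g$, in conformal harmonic coordinates the contracted Christoffel symbols of $\tilde g$ obey $\tilde\Gamma_a=2\p_a\log\psi$, where $\psi>0$ solves $L_{\tilde g}\psi=0$. Substituting this coordinate condition is exactly what collapses the principal part of each tensor to an elliptic symbol---for $\mathcal{O}$ it is $-\tfrac{1}{2(n-3)}\abs{\xi}^n$ times the identity on trace-free symmetric $2$-tensors, and analogously for $C$ and $W$---which is injective on the trace-free variations of $\tilde g$ allowed by $\det\tilde g=1$.

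The first thing I would check carefully is that the auxiliary function $\psi$, which depends non-locally on the metric, does not obstruct the bootstrap. Passing to the determinant-normalized representative is decisive here: $L_{\tilde g}$ has coefficients built only from $\tilde g$ and $R(\tilde g)$, so $\psi$ solves a second-order elliptic equation whose coefficients are controlled by the regularity of $\tilde g$, and the $\psi$-dependence enters the tensor equation only through terms of order $\le m-1$. One therefore treats $(\tilde g,\psi)$ as a coupled elliptic system---the tensor equation elliptic of order $m\in\{2,3,n\}$ in $\tilde g$, and $L_{\tilde g}\psi=0$ elliptic of order $2$ in $\psi$---in which $\psi$ is coupled only through lower-order terms and inherits the regularity of $\tilde g$ at every stage.

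With ellipticity in hand the bootstrap is routine. Writing the tensor equation schematically as
\[
A[\tilde g]\,\p^m\tilde g \;=\; T-Q\big(\tilde g,\p\tilde g,\ldots,\p^{m-1}\tilde g,\psi,\p\psi\big),
\]
with $A[\tilde g]$ algebraic in $\tilde g$ and $T$ the prescribed tensor, one notes that the conformal harmonic coordinates supplied by Theorem~\ref{Z-coord} are $C^{r+1}$, so that $\tilde g$ is initially $C^r$ in them. If $\tilde g\in C^t$ then $A[\tilde g]\in C^t$, the remainder $Q\in C^{t-m+1}$, the right-hand side lies in $C^{\min(s,\,t-m+1)}$, and (overdetermined) elliptic regularity upgrades $\tilde g$ to $C^{\min(s+m,\,t+1)}$. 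Iterating, $t$ grows by one at each step up to the ceiling $s+m$; this is exactly why the hypotheses $s>r-2$, $s>r-3$, $s>r-n$ are imposed, as they force $s+m>r$ and a genuine gain already at the first step. The $C^\infty$ claim is the same iteration with no stopping value, and for $C^\omega$ I would first obtain $C^\infty$ and then invoke Morrey's analyticity theorem for nonlinear elliptic systems with analytic coefficients---here the tensors are polynomial in $\tilde g$, $\tilde g^{-1}$, $\psi$ and their derivatives---to conclude that $\tilde g$ is real-analytic. The final assertion is the special case $T\equiv 0\in C^\omega$.

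The hard part will not be the bootstrap but the preparatory bookkeeping: verifying that after imposing $\tilde\Gamma_a=2\p_a\log\psi$ and $\det\tilde g=1$ the $\psi$-terms really sit at order $\le m-1$ (so that the symbol computation of Proposition~\ref{symbol_for_Bach} governs ellipticity), that the coupled regularity of $(\tilde g,\psi)$ closes at every order, and that $\abs{g}^{(n-2)/(2n)}$ is precisely the weight making $\abs{g}^{(n-2)/(2n)}\mathcal{O}_{ab}$ a weight-zero conformal invariant and hence a local expression in $\tilde g$. These are the additional considerations caused by the non-local dependence of the gauge on $g$.
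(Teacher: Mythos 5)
Your overall strategy coincides with the paper's: pass to the determinant-normalized representative $\hat g=\abs{g}^{-1/n}g$, use conformal invariance to view the given tensor as a tensor of $\hat g$, impose the gauge condition $\Gamma_a(\hat g)=2\p_a\log f_{\hat g}$ with $L_{\hat g}f_{\hat g}=0$ to obtain a quasilinear (overdetermined) elliptic operator, bootstrap with Schauder estimates, and invoke Morrey-type analyticity for the $C^\omega$ case. For parts (b) and (c) your plan is sound and matches Propositions~\ref{symbol_for_Bach} and~\ref{prop:Cotton}: for the Bach/obstruction tensor the contracted Christoffels enter only under a Laplacian, and $\Delta\log f_{\hat g}$ converts back into $-\frac{n-2}{4(n-1)}R(\hat g)$ plus first-order terms via the equation for $f_{\hat g}$, so the auxiliary function is eliminated (one does not even need your coupled $(\hat g,\psi)$ system); for the Cotton tensor the Hessian of $\log f_{\hat g}$ drops out of $\p_a\Gamma_b-\p_b\Gamma_a$ by symmetry, and injectivity of the third-order symbol requires exactly the trace normalization $\det\hat g=1$ that you point to (implemented in the paper by adding the vanishing term $\frac1n\nabla_a(\log\abs{\hat g})\hat g_{bc}$).

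There is, however, a genuine gap in part (a). You assert that the $\psi$-dependence enters only at order $\le m-1$ and that the gauge condition collapses the principal symbol of $W$ ``analogously'' to the scalar symbols of $B$ and $\mathcal{O}$. Neither claim holds for the Weyl tensor. Since $W$ is second order in $\hat g$ while $\p_a\Gamma_b(\hat g)=2\p_a\p_b\log f_{\hat g}$ is second order in $f_{\hat g}$, the non-local function enters the Weyl operator at \emph{principal} order, and only the trace $\Delta\log f_{\hat g}$ — not the full Hessian — can be traded for $R(\hat g)$ plus lower-order terms. The paper explicitly flags this as the point where $W$ differs from $B$, $\mathcal{O}$ and $C$, and resolves it by a different device: the identity $B_{ab}=\nabla^c\nabla^dW_{acbd}+\frac12R^{cd}W_{acbd}$ together with $\sigma(B_{ab}(\hat g))h=-\frac12\abs{\xi}^4h_{ab}$ gives $\xi^c\xi^d\,\sigma(W_{acbd}(\hat g))h=-\frac12\abs{\xi}^4h_{ab}$, hence injectivity of the Weyl symbol (which is injective but certainly not scalar). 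Restricting to trace-free variations, as you suggest, is not sufficient here: in a general gauge the kernel of the linearized Weyl symbol contains all directions of the form $\xi\odot v$, and it is precisely this Bach--Weyl identity, not a direct symbol computation, that shows the conformal harmonic gauge removes them. You would need to add this step (or an equivalent Douglis--Nirenberg ellipticity verification for your coupled system, which you have not carried out) to make part (a) complete.
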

A similar result, though not including the $C^\omega$ case, was proven in a joint work~\cite{LS2} by the second mentioned author of this paper in a different gauge, which will be discussed below. Theorem~\ref{regularity_thm_interior} states that a Riemannian manifold whose Bach or obstruction tensor vanishes, is locally conformal to a real analytic manifold. We use real analyticity to prove the following results. 
We show that if on a Bach or obstruction flat manifold a point has a neighborhood conformal to the Euclidean space, then the same property holds for all points of the manifold.
\begin{Theorem}
  Let $(M,g)$ be an obstruction flat, $\mathcal{O}_{(n)}\equiv 0$, connected Riemannian manifold without boundary of even dimension $n\geq 4$ with $g\in C^r$, $r>n-1$. Assume that $W(g)=0$ on an open set of $M$. Then $(M,g)$ is locally conformally flat i.e.
  \[
   W(g)\equiv 0 \text{ on } M.
  \]
 \end{Theorem}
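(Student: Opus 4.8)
The plan is to combine the real-analytic regularity supplied by the elliptic regularity theorem with the conformal invariance of the Weyl tensor and the identity theorem for real-analytic functions. First I would recall that for $n\geq 4$ local conformal flatness is equivalent to the vanishing of the Weyl tensor, and that the vanishing of $W$ is a pointwise conformal invariant: the $(1,3)$-type Weyl tensor $W^{a}{}_{bcd}$ satisfies $W(cg)=W(g)$ under a rescaling $g\mapsto cg$ with $c>0$. Writing $\tilde g=\abs{g}^{-1/n}g$ for the determinant normalized metric, this means the hypothesis $W(g)=0$ on an open set $\Omega\subset M$ is equivalent to $W(\tilde g)=0$ on $\Omega$, and the desired conclusion $W(g)\equiv 0$ is equivalent to $W(\tilde g)\equiv 0$.

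The key input is Theorem~\ref{regularity_thm_interior}(c), or rather its ``Especially'' clause: since $\mathcal{O}_{(n)}\equiv 0$ is trivially $C^\omega$, the normalized metric $\tilde g$ is real analytic in conformal harmonic coordinates around any point, where such coordinates exist by Theorem~\ref{Z-coord}. Because the components of the Weyl tensor are universal polynomials in the components of $\tilde g$, its inverse, and their derivatives up to second order, the functions $W(\tilde g)$ are themselves real analytic in every such chart. This is where the work of the regularity theorem is genuinely used: the point is to upgrade ``smooth'' to ``real analytic'' in a good gauge.

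With these facts in hand I would run a standard connectedness argument. Set $U=\{p\in M: W(g)=0 \text{ on a neighborhood of } p\}$. Then $U$ is open and nonempty, since $\Omega\subset U$. To show $U$ is closed, take $p\in\overline U$ and choose a connected conformal harmonic coordinate chart $V\ni p$; on $V$ the components $W(\tilde g)$ are real analytic. As $p\in\overline U$ and $V$ is open, some point of $U$ lies in $V$, so $W(\tilde g)$ vanishes on a nonempty open subset of $V$. The identity theorem for real-analytic functions on the connected set $V$ then forces $W(\tilde g)\equiv 0$ on all of $V$, whence $p\in U$. Thus $U$ is closed, and since $M$ is connected, $U=M$, giving $W(g)\equiv 0$ and local conformal flatness.

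The main obstacle is not the connectedness step, which is routine, but in correctly invoking Theorem~\ref{regularity_thm_interior}(c): one must check that the hypotheses ($g\in C^r$ with $r>n-1$, together with the existence of conformal harmonic coordinates from Theorem~\ref{Z-coord}) are met so that the \emph{full} normalized metric, and hence \emph{every} component of the Weyl tensor, is genuinely real analytic rather than merely smooth in each chart. Once real analyticity of $W(\tilde g)$ is secured, the conformal invariance of $W$ and the identity theorem do the rest.
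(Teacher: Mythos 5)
Your proposal is correct and follows essentially the same route as the paper's proof: invoke Theorem~\ref{regularity_thm_interior} to get real analyticity of the determinant normalized metric in conformal harmonic coordinates, observe that $W(\hat g)$ is then real analytic and vanishes on an open subset by conformal invariance, apply the identity theorem, and conclude by an open--closed connectedness argument. The only cosmetic difference is that you phrase the connectedness step via the set of points with a neighborhood on which $W$ vanishes, while the paper works with the largest open set $B$ where $W=0$ and boundary points of $B$; these are interchangeable.
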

  Theorem~\ref{ucp_for_Bach_flat} can be regarded as a rigidity result for Weyl flat metrics. We also have that Bach and obstruction flat manifolds can be locally conformally embedded in $\R^{n(n+1)/2-1}$.
  \begin{Theorem}
 An obstruction flat, $\mathcal{O}_{(n)}\equiv 0$, Riemannian manifold $(M,g)$ of even dimension $\geq 4$ with $g\in C^{r}$, $r>n-1$, can be locally embedded in $\R^N$, $N=n(n+1)/2-1$, by a conformal mapping.
\end{Theorem}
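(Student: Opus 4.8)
The plan is to reduce the statement to a conformal embedding theorem for real analytic metrics, and then to prove the latter by a Cartan--Janet type argument in which the conformal factor is treated as an extra unknown.

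First I would remove the curvature hypothesis by passing to the determinant normalized metric. Since $\mathcal{O}_{(n)}\equiv 0$, the preceding regularity theorem (Theorem~\ref{regularity_thm_interior}(c)) shows that in conformal harmonic coordinates the components of $\abs{g}^{-1/n}g$ are real analytic. As $\abs{g}^{-1/n}g$ lies in the conformal class of $g$, any conformal embedding of $(M,\abs{g}^{-1/n}g)$ into $\R^N$ is simultaneously a conformal embedding of $(M,g)$. Hence it suffices to prove that every real analytic Riemannian $n$-manifold admits, near each point, a conformal embedding into $\R^N$ with $N=n(n+1)/2-1$.

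Next I would set up the conformal embedding as an analytic system. Writing $\tg$ for the (now real analytic) metric, I seek a map $F=(F^1,\ldots,F^N)$ and a positive function $c$ on a coordinate neighborhood $U$ solving
\[
 \sum_{\alpha=1}^N \p_i F^\alpha\, \p_j F^\alpha = c\,\tg_{ij}, \qquad 1\le i\le j\le n.
\]
This is a system of $n(n+1)/2$ equations. The unknowns are the $N$ components $F^\alpha$ together with the single function $c$, i.e.\ $N+1$ functions in total. The choice $N=n(n+1)/2-1$ is exactly the one that makes the number of unknowns equal to the number of equations; compared with the classical isometric problem, treating the conformal factor $c$ as a genuine unknown is what saves one target dimension relative to the Cartan--Janet dimension $n(n+1)/2$. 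Note that any solution with $c>0$ is automatically an immersion, since the matrix $\big(\sum_\alpha \p_i F^\alpha\, \p_j F^\alpha\big)_{ij}=c\,\tg_{ij}$ is then positive definite, and locally an immersion is an embedding.

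Finally I would solve the system by the \emph{Cartan--Janet method}. Because $\tg$ is real analytic, one recasts the (a priori overdetermined looking) system into Cauchy--Kovalevskaya normal form by differentiating and recombining the equations and prescribing non-degenerate analytic initial data along a hypersurface; Cauchy--Kovalevskaya then yields a local analytic solution, and one checks that the constraints propagate so that the original equations hold. The conformal factor $c$ is carried through exactly as the extra coordinate would be in the isometric case, with its initial data chosen compatibly (for instance $c\equiv 1$ on the initial slice). I expect the main obstacle to be this last step: as in the Cartan--Janet theorem, the delicate points are the reduction to Cauchy--Kovalevskaya form together with the verification that the Gauss--Codazzi type constraints are preserved by the evolution, and the choice of non-degenerate initial data that remains compatible once the extra unknown $c$ is included. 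Real analyticity is indispensable here, and it is precisely what obstruction flatness guarantees through the regularity theorem.
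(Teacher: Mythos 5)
Your overall strategy coincides with the paper's: use Theorem~\ref{regularity_thm_interior}(c) to conclude that $\hat{g}=\abs{g}^{-1/n}g$ is real analytic in conformal harmonic coordinates, observe that a conformal embedding of $(M,\hat g)$ is one of $(M,g)$, and then invoke a conformal analogue of the Cartan--Janet theorem for the real analytic metric. The paper simply cites this last ingredient as a known result (Jacobowitz--Moore, \cite[Theorem 2]{JM73}), which gives a local conformal embedding of a real analytic metric into $\R^{n(n+1)/2-1}$, and then composes with the coordinate chart.

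The gap in your write-up is exactly that last ingredient. Counting $N+1$ unknowns against $n(n+1)/2$ equations in the system $\sum_\alpha \p_iF^\alpha\p_jF^\alpha=c\,\tg_{ij}$ is only heuristic: the classical isometric problem already has as many unknowns as equations and its local solvability (Cartan--Janet) is a hard theorem, and the conformal variant with one fewer target dimension is a separate theorem whose proof is not a routine modification. You explicitly defer the reduction to Cauchy--Kovalevskaya form, the propagation of the Gauss--Codazzi type constraints, and the choice of non-degenerate initial data compatible with the extra unknown $c$ --- but this is precisely where all the work lies, so as written the argument is incomplete. To close the gap you should either cite the Jacobowitz--Moore result, as the paper does, or actually carry out the Cartan--Janet construction for the conformal system. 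Everything preceding that point (the reduction via the regularity theorem, the conformal-class observation, and the remark that $c>0$ forces an immersion, hence a local embedding) is correct and matches the paper.
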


Lastly we turn to discuss conformal harmonic coordinates on Lorentzian manifolds. On Lorentzian manifolds we call these coordinates \emph{conformal wave coordinates}. We show the existence of conformal wave coordinates and record coordinate formulas for conformal curvature tensors in these coordinates. Our final new result is a unique continuation of conformal mappings on Lorentzian manifolds.
\begin{Theorem}
 Let $(M,g)$ be a connected, oriented, time oriented, globally hyperbolic Lorentz manifold without boundary of dimension $n\geq 3$, and let $(N,h)$ be an $n$-dimensional Lorentz manifold without boundary. Let $S$ be a $C^\infty$ smooth spacelike Cauchy hypersurface in $M$. 
 Let $\Omega$ be a subset of $S$.
 
 Assume that $F_1$ and $F_2$ are two $C^\infty$ smooth locally diffeomorphic conformal mappings $M \to N$, 
 $F_1^*h=c_1 g$ and $F_2^*h=c_2 g$. Assume also that $F_1$ and $F_2$ agree to second order on $\Omega$. 
Then, for $p\in \mathcal{D}^+(\Omega)\cup \mathcal{D}^-(\Omega)$ holds 
 \[
  F_1(p)=F_2(p). 
 \]
 Here $\mathcal{D}^+(\Omega)$ and $\mathcal{D}^-(\Omega)$ are the future and past Cauchy developments of $\Omega$.
%
Especially if $\Omega=S$, then $F_1=F_2$ in $M$. 
\end{Theorem}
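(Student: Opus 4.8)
The plan is to reduce the statement to a Cauchy uniqueness statement for the \emph{linear} conformal wave equation $L_g(\cdot)=0$, whose finite speed of propagation produces exactly the Cauchy developments $\mathcal{D}^\pm(\Omega)$. The starting point is the fact, already used in the Riemannian results, that conformal mappings preserve conformal (wave) harmonic coordinates. Fix $p_0\in\Omega$ and set $q=F_1(p_0)=F_2(p_0)$. Using the existence of conformal wave coordinates, choose near $q$ coordinates $Z^k=w^k/w$ as in~\eqref{quotient_form} on $N$, where $w^k$ and $w>0$ solve $L_hw^k=L_hw=0$. By the conformal invariance of the conformal Laplacian together with its naturality under the maps $F_j$ (cf.\ Proposition~\ref{Z_coords_conformally_invariant}), the locally defined functions
\[
 \tilde{w}_j^k:=c_j^{\frac{n-2}{4}}\,(w^k\circ F_j),\qquad \tilde{w}_j:=c_j^{\frac{n-2}{4}}\,(w\circ F_j),
\]
solve the single, fixed linear wave equation $L_g(\cdot)=0$ on $(M,g)$, and $Z^k\circ F_j=\tilde{w}_j^k/\tilde{w}_j$. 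It therefore suffices to prove that $\tilde{w}_1^k=\tilde{w}_2^k$ and $\tilde{w}_1=\tilde{w}_2$ where the maps are to be compared; crucially, all these functions satisfy the \emph{same} equation, so no mismatch of lower order coefficients arises, in contrast to what one gets by tracking the quotients directly.

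Next I would verify that the second order agreement of $F_1,F_2$ on $\Omega$ is precisely what is needed to match the Cauchy data of the $\tilde{w}_j$ and $\tilde{w}_j^k$ on $\Omega$. Since $S$ is spacelike, the Cauchy data for $L_g(\cdot)=0$ are the value and the normal derivative on $S$. The values $\tilde{w}_j|_\Omega$ involve only $F_j$ and the conformal factor $c_j=\tfrac1n g^{ab}\partial_aF_j^l\,\partial_bF_j^m\,h_{lm}(F_j)$, hence only the zeroth and first order jets of $F_j$, which agree on $\Omega$. The normal derivatives $\partial_\nu\tilde{w}_j|_\Omega$ additionally involve $\partial_\nu c_j$, which brings in the second derivatives of $F_j$; these agree on $\Omega$ by hypothesis. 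Thus $\tilde{w}_1,\tilde{w}_2$ (and likewise $\tilde{w}_1^k,\tilde{w}_2^k$) share the same Cauchy data on $\Omega$. By uniqueness for the Cauchy problem of the wave operator $L_g$ in a globally hyperbolic spacetime (finite speed of propagation, domain of dependence), these functions coincide on the part of $\mathcal{D}^\pm(\Omega)$ lying in their common domain of definition near $p_0$, whence $F_1=F_2$ there.

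The remaining point is to upgrade this local conclusion to all of $\mathcal{D}^+(\Omega)\cup\mathcal{D}^-(\Omega)$. Because $w^k,w$, and with them $\tilde{w}_j^k,\tilde{w}_j$, exist only locally, I would run a continuation argument using global hyperbolicity: choose a Cauchy time function $t$ for the development and let $T^*$ be the supremum of times up to which $F_1=F_2$ in $\mathcal{D}^+(\Omega)$. If $T^*$ were below the top, then $F_1=F_2$ on the open set $\{t<T^*\}\cap\mathcal{D}^+(\Omega)$, so all derivatives of $F_1,F_2$ agree up to the spacelike slice $\{t=T^*\}$; taking this slice as a new initial surface and repeating the local step forces agreement slightly beyond $T^*$, a contradiction. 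Hence $F_1=F_2$ on $\mathcal{D}^+(\Omega)$, and symmetrically on $\mathcal{D}^-(\Omega)$. When $\Omega=S$ one has $\mathcal{D}^+(S)\cup\mathcal{D}^-(S)=M$, since $S$ is a Cauchy surface, so $F_1=F_2$ on $M$.

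I expect the main obstacle to be this globalization. The conformal wave coordinates, and thus the solutions $\tilde{w}_j$ of $L_g(\cdot)=0$, are only locally defined, so one cannot invoke a single global Cauchy uniqueness statement; the argument must instead propagate agreement slice by slice through the globally hyperbolic development, re-choosing coordinates at each stage and matching the (now full) Cauchy data supplied by agreement on the open set below each slice. The verification that second order agreement of the maps yields matching normal derivatives of $\tilde{w}_j$, in particular the computation of $\partial_\nu c_j$, is the other delicate point, and it is exactly what pins the hypothesis at second rather than first order.
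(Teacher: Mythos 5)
Your proposal is correct and follows essentially the same route as the paper: pull back conformal wave coordinates by $F_j$, rescale by $c_j^{(n-2)/4}$ to obtain solutions of the single linear equation $\mathcal{L}_g(\,\cdot\,)=0$, match their Cauchy data using the second-order agreement of $F_1,F_2$, and propagate equality by local uniqueness for the wave equation through a temporal-function slicing of the development. The only device you omit is the paper's restriction to the compact set $J^-(P)\cap\mathcal{D}^+(\Omega)$ for a fixed target point $P$, which supplies the finite covering by coordinate charts and hence the uniform $\eps$ needed in the openness step of your continuation argument.
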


\subsection{Comparison with previous literature} 
The regularity of conformal mappings between Riemannian manifolds in the interior has been studied in~\cite{Ferrand, Re78, Iwaniec_thesis, Sh82}. Recent works on this subject include~\cite{LS1, JLS}, and the works~\cite{CCLO17} and~\cite{MT17} on subRiemannian and Finslerian manifolds respectively. 
The works~\cite{LS1, JLS} rely on constructing coordinates called $n$-harmonic coordinates on a Riemannian manifold of dimension $n$ that satisfy the $n$-Laplace equation $-\nabla^a(\abs{\nabla u}^{n-2}\nabla_au)=0$. The $n$-Laplace equation is nonlinear and degenerate elliptic. The work~\cite{CCLO17} applies coordinates on a subRiemannian manifold that solve a so called $Q$-Laplace equation, which 
is analogous to the $n$-Laplace equation.  
An additional assumption, which is satisfied on contact manifolds and related to the degeneracy of the $Q$-Laplace equation, is imposed in~\cite{CCLO17} to prove regularity of conformal mappings in general. Unique continuation of conformal mappings on Riemannian manifolds has also been studied in~\cite{Pa95}. A beneficial property of using conformal harmonic coordinates in studying regularity and unique continuation questions in conformal geometry seems to be that the involved equations are linear and elliptic (or hyperbolic in the Lorentzian setting). For example, boundary regularity of linear elliptic equations is standard. 

There are a number of results in conformal geometry that employ various gauges to obtain elliptic or parabolic regularity and existence results. We mention the works \cite{AcheViaclovsky, Anderson, Helliwell, Tian}, which study asymptotically locally Euclidean manifolds with obstruction flat metrics or boundary regularity of conformally compact Einstein metrics. These works involve constant scalar curvature to fix the conformal invariance and use harmonic coordinates for a conformal metric of constant scalar curvature. The works~\cite{LS2, JLS} uses $n$-harmonic coordinates and determinant normalized metric to have elliptic regularity results for conformal curvature tensors.

A conformal analogue of the Ricci flow in dimension $4$ is the Bach flow.
The work~\cite{BahuaudHelliwell} uses a version of the DeTurck trick to show that certain analogues of the Ricci flow, including flows involving the Bach and obstruction tensors, are locally well posed. The question of local wellposedness of the actual Bach flow is to the best of our knowledge still open, see e.g.~\cite{Ho18}. We also mention the works~\cite{GurskyViaclovsky1}, \cite{GurskyViaclovsky2} that study quadratic curvature functionals, where the corresponding 
Euler-Lagrange equations become elliptic via suitable gauge conditions. 

Let us give examples of gauge fixes in conformal geometry. For a $4$-dimensional Riemannian manifold whose Bach tensor vanishes, choosing a conformal metric with constant scalar curvature produces the equation
\[
 B_{ab}=\nabla^c \nabla_c R_{ab} + \text{lower order terms} = 0
\]
for the conformal metric. This equation becomes elliptic in harmonic coordinates for the conformal metric, see e.g.~\cite{Anderson, Helliwell, Tian}. In this paper, we fix the coordinate invariance by using conformal harmonic coordinates, which are indeed harmonic coordinates for a constant scalar curvature metric. However, we fix the conformal invariance by normalizing the determinant of the metric instead of using constant scalar curvature. Possible benefits of using our gauge (conformal harmonic coordinates + determinant one) over (constant scalar curvature + harmonic coordinates) are that the fix of the conformal invariance is local and explicit in the metric (used in~\cite{LLS}), and that in some applications fixing the determinant (the volume form) of the metric might be more natural. One example of the latter is the Bach flow, which preserves the volume form. The works~\cite{LS2, JLS} prove a theorem similar to Theorem~\ref{regularity_thm_interior} in a gauge, which combines the use of $n$-harmonic coordinates with determinant normalized metric. 
A possible benefit of our gauge (conformal harmonic coordinates + determinant one) over the gauge ($n$-harmonic coordinates + determinant one), is that the involved equations are linear and elliptic instead of nonlinear and degenerate elliptic. Also the principal symbol of the obstruction tensor in the gauge of~\cite{LS2,JLS} is not a scalar as it is in our gauge, which can be useful for example in studying unique continuation. In addition, there is no theory of $n$-Laplace equation in Lorentzian geometry to the best of our knowledge. 


Lastly we mention the recent interest in physics in conformal gravity~\cite{Ma11, Ma12}. This is a theory of Lorentz manifolds governed by the Bach tensor. In a vacuum, the field equations are $B_{ab}=0$. In the leading order, the equations $B_{ab}=0$ in conformal wave coordinates are just the square of the wave operator $\square^{\,2}$ operating on the determinant normalized metric. In conformal wave coordinates, the contracted symbols $\Gamma_a$ for the determinant normalized metric satisfy $\Delta \Gamma_a=0$ in the leading order. This condition is similar to the condition of harmonic coordinates, which have been useful in the study of the Einstein's general relativity. Because of these formulas, our gauge (conformal wave coordinates + determinant one) has potential to be a useful gauge in conformal gravity.  Section~\ref{sec_lorentzian_manifolds} for the formulas. 

As a summary, conformal harmonic coordinates, used together with determinant normalization in some applications, seem to combine some of the advantageous properties of different previous gauges. Conformal harmonic coordinates are a close conformal analogue of harmonic coordinates and other Riemannian applications of harmonic coordinates might have conformal analogues by using conformal harmonic coordinates. 
A notable disadvantage of conformal harmonic coordinates is that a notion of scalar curvature is needed. Since scalar curvature contains derivatives of the metric up to second order, the works~\cite{LS2, JLS} can prove results for metrics of lower regularity than we can do in this paper. On this matter, we mention a distributional formulation of the scalar curvature studied in the recent paper~\cite{LF15}.

\subsection{Organization of the paper}
The paper is structured as follows. In Section~\ref{sec:conformal_harmonic_coordinates} we prove the existence and regularity of conformal harmonic coordinates and characterize them. In Section~\ref{applications_of_conf_harm} we apply conformal harmonic coordinates in Riemannian geometry by first studying regularity and unique continuation properties of conformal mappings. In Section~\ref{Bach_formulas} we study conformal curvature tensors and regularity of Riemannian metrics satisfying conformal curvature equations. In Section~\ref{Z-coord_lorentz} we discuss conformal wave coordinates and prove a unique continuation results for conformal mappings on a Lorentzian manifold.
The appendix contains a list of formulas in Riemannian geometry and an existence result for solutions to the conformal Laplacian equation with some prescribed properties at a single point. 

\subsection{Acknowledgments}
The authors were supported by the Finnish Centre of Excellence in Inverse Modelling and Imaging (Academy of Finland grant 284715).
The authors wish to thank Vesa Julin and Mikko Salo for helpful discussions. We also wish to thank J\"urgen Jost for providing additional references regarding the use of the usual harmonic coordinates.
\section{Conformal harmonic coordinates}\label{sec:conformal_harmonic_coordinates}
In this section we introduce conformal harmonic coordinates.
We prove that if a Riemannian manifold $(M,g)$ has a $C^r$ regular metric tensor $g$, $r>2$, then on a neighborhood of any point on the manifold $M$ exists a conformal harmonic coordinate system. After proving the existence, we derive characteristic properties of conformal harmonic coordinates including a coordinate condition for the Christoffel symbols. 

The manifold $M$ may have a boundary $\p M$ in some parts of the text. We always assume that dimensions of the manifolds we consider are at least three,
\[
 n=\dim(M)\geq 3.
\]
If $M$ has a boundary, we denote the interior points of $M$ by $\Mi$. We call a coordinate system $U$ on a neighborhood $\Omega$ of a boundary point a \emph{boundary coordinate system} if
\[
U(\Omega)\subset \{x\in \R^n: x^n\geq 0\} \text{ and } U(\Omega\cap \p M)\subset \{x\in \R^n: x^n=0\}.
\]
If the Riemannian metric is $C^r$, we assume that the transition functions of the atlas of the manifold are $C^{r+1}$. Especially, if the manifold has a boundary, we assume the boundary is $C^{r+1}$ and that the Riemannian metric is $C^r$ up to $\{x^n=0\}$ in any boundary chart of the atlas. If $r=\infty$ or $r=\omega$ we assume that the transition functions are $C^\infty$ smooth or real analytic $C^\omega$. We refer to~\cite{Le13} for more details about manifolds with boundary. 

Conformal harmonic coordinates are defined as follows~\cite{LLS}.
\begin{Definition}\label{def:Z_coords}
 Let $(M,g)$ be a Riemannian manifold, possibly with boundary, and where $g$ is in $C^r$ with $r>2$. Local coordinates $(Z^1,\ldots,Z^n)$ on a neighborhood $\Omega$ of a point $p\in M$ are called conformal harmonic coordinates if the components $Z^k$, $k=1,\ldots,n$, are functions on $\Omega$ of the form
\begin{equation}\label{Z_form1}
Z^k=\frac{f^k}{f},
\end{equation}
where $f^k$ and $f>0$ are any functions that satisfy
\[
 L_gf^k=L_gf=0 \text{ on } \Omega.
\]
If $p\in \p M$, we additionally require that $(Z^1,\ldots,Z^n)$ is a boundary coordinate system. 
\end{Definition}
Conformal harmonic coordinates were invented in~\cite{LLS}. In that paper the coordinates were called $Z$-coordinates and their existence were shown for neighborhoods of boundary points and under an additional assumption. 
We prove the existence also for the interior points and without additional assumptions. We also consider the cases of H\"older regular and real analytic metrics. In proving the existence, we will use Lemma~\ref{prescribed_sols} in the appendix to have solutions to the conformal Laplace equation with prescribed values and gradients at a single given point.

\begin{Theorem}[Existence in the interior]\label{Z-coord}
Let $(M,g)$ be a Riemannian manifold without boundary whose metric is in $C^r$, with $r>2$. Let $p\in M$. 


\noindent \textbf{(1)} There exist conformal harmonic coordinates 
on a neighborhood of $p$. 

\noindent \textbf{(2)} If $r=k+\alpha$, $k\in \N$, $k\geq 2$, and $\alpha\in (0,1)$, all conformal harmonic

coordinates defined on a neighborhood of $p$ have $C^{r+1}$ regularity. If 

additionally $g\in C^\infty$ or $g\in C^\omega$, then all conformal harmonic coordinates 

are $C^\infty$ or $C^\omega$ respectively.

\noindent \textbf{(3)} There exist conformal harmonic coordinates so that the coordinate 

representation of the metric in these coordinates satisfies $g_{jk}(p)=\delta_{jk}$. 

%
%
%
%
\end{Theorem}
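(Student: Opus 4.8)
The plan is to construct the required solutions to the conformal Laplace equation directly, using Lemma~\ref{prescribed_sols} from the appendix to prescribe values and gradients at the point $p$. For part (1), the strategy is to produce $n+1$ functions $f, f^1, \ldots, f^n$ that all satisfy $L_g f = L_g f^k = 0$ on a neighborhood of $p$, with $f>0$, such that the quotient map $Z = (f^1/f, \ldots, f^n/f)$ has invertible differential at $p$. First I would use Lemma~\ref{prescribed_sols} to obtain a solution $f$ with $f(p)=1$ and $\nabla f(p)=0$; by continuity $f$ stays positive on a small enough neighborhood. Then, for each $k$, I would obtain a solution $f^k$ with $f^k(p)=0$ and $\nabla f^k(p) = e_k$ (the $k$-th coordinate covector in some fixed background chart around $p$). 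Computing the differential of the quotient $Z^k = f^k/f$ at $p$ gives
\[
\p_a Z^k(p) = \frac{\p_a f^k(p)\, f(p) - f^k(p)\, \p_a f(p)}{f(p)^2} = \p_a f^k(p) = \delta^k_a,
\]
since $f(p)=1$, $f^k(p)=0$, and $\nabla f(p)=0$. Hence $dZ(p)$ is the identity with respect to the background chart, so by the inverse function theorem $Z$ is a diffeomorphism onto its image on a possibly smaller neighborhood of $p$, yielding conformal harmonic coordinates.

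For part (2), the regularity claim is essentially local elliptic regularity for the conformal Laplacian $L_g = \Delta_g + \frac{n-2}{4(n-1)}R(g)$. Since $L_g$ is a second-order elliptic operator whose principal part is $\Delta_g$ with coefficients built from $g \in C^r$, Schauder theory gives that any solution $u$ of $L_g u = 0$ satisfies $u \in C^{r+1}$ when $r = k+\alpha$ is non-integer (the solution gains two derivatives over the coefficients, but the coefficients $g^{ab}$ and $R(g)$ are only $C^r$ and $C^{r-2}$ respectively, so the net regularity is $C^{r+1}$). I would then note that the quotient $Z^k = f^k/f$ of two $C^{r+1}$ functions with $f>0$ is again $C^{r+1}$. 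The $C^\infty$ and $C^\omega$ cases follow from the corresponding elliptic regularity statements (bootstrapping for $C^\infty$, and analytic elliptic regularity à la Morrey for $C^\omega$, using that $L_g$ has analytic coefficients when $g\in C^\omega$). The only subtlety is to confirm the claimed gain is exactly $C^{r+1}$ rather than $C^{r+2}$; this is controlled by the regularity of the coefficient $g^{ab}\in C^r$ appearing in $\Delta_g$, which limits the Schauder estimate.

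Part (3) is a normalization on top of part (1). Having produced conformal harmonic coordinates $Z$ with $dZ(p)$ equal to the identity relative to a background chart in which $g_{jk}(p) = G_{jk}$ for some positive definite matrix $G$, I would postcompose $Z$ with a constant linear map $A \in GL(n,\R)$. Under such a linear change, the coordinate representation of the metric at $p$ transforms by $g \mapsto (A^{-1})^T G A^{-1}$, and conformal harmonicity is preserved because a linear combination of solutions to $L_g u = 0$ is again a solution and the quotient structure in~\eqref{Z_form1} is unchanged (the same $f$ works, while each $f^k$ is replaced by a linear combination). Choosing $A$ so that $(A^{-1})^T G A^{-1} = I$, which is possible since $G$ is symmetric positive definite (e.g. $A = G^{1/2}$), gives $g_{jk}(p) = \delta_{jk}$ in the new coordinates.

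The main obstacle I anticipate is entirely contained in Lemma~\ref{prescribed_sols}: producing, on a neighborhood of $p$, genuine solutions of the linear equation $L_g u = 0$ with freely prescribed zeroth- and first-order data at the single point $p$. Local solvability of $L_g u = 0$ is not automatic because $L_g$ may fail to be injective on the chosen neighborhood (the scalar curvature term can obstruct the maximum principle), so one cannot simply invoke unique solvability of a Dirichlet problem. The appendix lemma presumably circumvents this by working on a sufficiently small ball where $L_g$ is invertible, or by a perturbative construction; assuming it as stated, the remaining arguments above are standard and the positivity of $f$ near $p$ together with the inverse function theorem complete the existence claim cleanly.
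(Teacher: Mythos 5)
Your existence argument (part (1)) and the normalization in part (3) match the paper's proof: prescribe $f(p)=1$, $f^k(p)=0$, $df^k(p)=dx^k$ via Lemma~\ref{prescribed_sols}, compute $DZ(p)=I$, and apply the inverse function theorem. (A small over-claim: the lemma only gives a positive solution with $f(0)=1$; it does not let you also prescribe $\nabla f(p)=0$. This is harmless since $f^k(p)=0$ already kills the term $-f^k\,\p_a f/f^2$ at $p$, so you never need $\nabla f(p)=0$.)

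The genuine gap is in part (2). You assert that Schauder theory gives $u\in C^{r+1}$ for solutions of $L_gu=0$. It does not: the zeroth-order coefficient $R(g)$ is only $C^{r-2}$, and Schauder regularity is limited by the worst coefficient, so solutions of $L_gu=0$ are in general only $C^{r}$, not $C^{r+1}$. Your subsequent step, ``the quotient of two $C^{r+1}$ functions is $C^{r+1}$,'' therefore has nothing to stand on, and indeed taking quotients of $C^r$ functions would only give $C^r$ coordinates --- which is not the claimed regularity. The missing idea, which is the heart of the paper's proof, is that the \emph{quotient} $Z^k=f^k/f$ is one degree smoother than $f^k$ and $f$ individually: since $R(f^{p-2}g)=0$ (because $L_gf=0$), the conformal invariance \eqref{conformal_invariance_of_L} gives
\begin{equation*}
\Delta_{f^{p-2}g}\,\frac{f^k}{f}=L_{f^{p-2}g}\,\frac{f^k}{f}=f^{1-p}L_gf^k=0,
\end{equation*}
so each $Z^k$ solves a divergence-form elliptic equation with $C^r$ coefficients and \emph{no} zeroth-order term, whence $Z^k\in C^{r+1}$ by interior elliptic regularity (and $C^\infty$ or $C^\omega$ in those cases). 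This same identity is also what shows that \emph{all} conformal harmonic coordinates, not just the constructed ones, have the stated regularity. You should replace your Schauder claim for $L_g$ by this conformal rescaling argument.
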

\begin{proof}
 Let $p\in M$. Let $(x^k)$ be a coordinate system on a neighborhood of $p$ where the coordinate representation of the Riemannian metric is in $C^r$. We may assume that $x(p)=0$. By Lemma~\ref{prescribed_sols}, there is $\eps>0$ and functions $f^k\in C^r$, $k=1,\ldots,n$, so that
 \begin{equation}\label{fks}
  L_gf^k=0 \mbox{ on } B(0,\eps), \mbox{ with } df^k(0)=dx^k \text{ and } f^k(0)=0.
 \end{equation}
By the same lemma, there is also a positive function $f\in C^r$ such that $L_gf=0$ on $B(0,\eps)$ and $f(0)=1$.
 
 Let us denote $F=(f^1,\ldots, f^n)$. The Jacobian matrix of the mapping $Z=(Z^1,\ldots, Z^n)$, where $Z^k=f^k/f$, reads
 \begin{equation}\label{differential_of_Z}
 DZ=\frac{1}{f}DF -\frac{1}{f^2}F\otimes d f.
 \end{equation}
 Here $F\otimes d f$ is a matrix field with components $(F\otimes d f)^k_j=f^k\p_jf$. By the equations~\eqref{fks} and~\eqref{differential_of_Z} and the condition $f(0)=1$, we have that 
 \[
  DZ(0)=I_{n\times n}.
 \]
 Thus $Z=(Z^1,\ldots, Z^n)$ is a coordinate system on a neighborhood of $p$ by the inverse function theorem.
 
 By construction, the coordinate system $Z$ is only $C^{r}$ regular and we still need to prove the higher $C^{r+1}$ regularity. For this we note that the metric 
 \[
  \tilde{g}=f^{p-2}g\in C^r,
 \]
 where 
 \[
p=\frac{2n}{n-2},
\]
has zero scalar curvature due to the formula~\eqref{scalar_curvature_scaled2}, 
 \begin{equation*}
 R(\tilde{g})=f^{1-p}\left(4\frac{n-1}{n-2}\Delta_gf+R(g)f\right)=4\frac{n-1}{n-2}f^{1-p}L_gf=0.
 \end{equation*}
 Using~\eqref{scalar_curvature_scaled2} and the conformal invariance~\eqref{conformal_invariance_of_L} of the conformal Laplacian, we have that
 \begin{equation}\label{additional_regularity}
 \Delta_{f^{p-2}g}\frac{f^k}{f}=L_{f^{p-2}g}\frac{f^k}{f}=f^{1-p}L_gf^k=0.
 \end{equation}
 Thus each $f^k/f$ satisfies an elliptic equation in divergence form with $C^{r}$ coefficients. It follows that $Z\in C^{r+1}$ by elliptic regularity (see e.g.~\cite[Theorem 6.17]{GT}). The equation ~\eqref{additional_regularity} also shows that any conformal harmonic coordinate system has $C^{r+1}$ regularity.
If $g$ is real analytic, the coordinates $Z$ are $C^\omega$ by real analytic elliptic regularity 
(see e.g.~\cite[Appendix J]{Besse}). We have proven the claims $(1)$ and $(2)$ about the existence and regularity of conformal harmonic coordinates.
 
 To prove the claim $(3)$, we choose the solution $f$ as before, but apply Lemma~\ref{prescribed_sols} to have solutions $f^k$ that satisfy
 \[
  \p_jf^k(0)=A_j^k,
 \]
 and $f^k(0)=0$, where $A$ is an invertible matrix that satisfies $g(0)=A^TA$. 
\end{proof}

\begin{Theorem}[Existence near boundary]\label{Z-coord_boundary}
Let $(M,g)$ be a Riemannian manifold with boundary whose metric is in $C^r$, with $r>2$. Let $p\in M$.
\noindent \textbf{(1)} There exist conformal harmonic coordinates 
on a neighborhood of $p$. 

\noindent \textbf{(2)} If $p\in \p M$, if $r=k+\alpha$, $k\in \N$, $k\geq 2$ and $\alpha\in (0,1)$, and if $g$ is in $C^r$, 

$C^\infty$ or $C^\omega$, then all conformal harmonic coordinates, which are defined 

on a neighborhood of $p$, and whose restrictions to the boundary are $C^{r+1}$, 

$C^\infty$ or $C^\omega$, are in $C^{r+1}$, $C^\infty$ or $C^\omega$ up to boundary respectively.

\noindent \textbf{(3)}  If $(u^1,\ldots,u^{n})$ is $C^{r+1}$ boundary coordinate system of $M$, then there exist \indent conformal harmonic coordinates whose restriction to the boundary equals \indent $(u^1,\ldots,u^{n-1},0)$. 
\end{Theorem}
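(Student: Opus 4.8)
The plan is to reduce claim (1) to interior and boundary points: at an interior $p$ Theorem~\ref{Z-coord} applies verbatim, so the entire new content sits at boundary points $p\in\p M$, where one must produce a \emph{boundary} coordinate system and control regularity up to $\p M$. First I would fix a $C^{r+1}$ boundary chart $(u^1,\dots,u^n)$ near $p$ with $u(p)=0$, identifying a neighborhood of $p$ with a half-ball $B^+=B(0,\eps)\cap\{u^n\ge0\}$ whose flat face $\Sigma_0=\{u^n=0\}$ lies on $\p M$. On a sufficiently small $B^+$ the operator $L_g$ is coercive, since the first Dirichlet eigenvalue of $\Delta_g$ tends to $+\infty$ as the domain shrinks while $\frac{n-2}{4(n-1)}R(g)$ stays bounded; hence Dirichlet problems for $L_g$ are uniquely solvable and the maximum principle and Hopf lemma are available.

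For the existence statements (1) and (3) I would construct the three ingredients of Definition~\ref{def:Z_coords} by solving boundary value problems rather than using the pointwise Lemma~\ref{prescribed_sols}. First, a positive solution $f$ with $L_gf=0$ and $f\equiv1$ on $\p B^+$, which is positive by the maximum principle and satisfies $f|_{\Sigma_0}=1$. Next, tangential solutions $f^1,\dots,f^{n-1}$ with $L_gf^j=0$ and $f^j|_{\Sigma_0}=u^j$, so that $Z^j=f^j/f$ restricts to $u^j$ on $\p M$. Finally a normal solution $f^n$ with $L_gf^n=0$, $f^n|_{\Sigma_0}=0$ and strictly positive Dirichlet data on the curved part of $\p B^+$; the maximum principle gives $f^n>0$ in the interior, so $Z^n=f^n/f$ is nonnegative and vanishes exactly on $\p M$, while the Hopf lemma yields $\p_\nu f^n(p)>0$ and hence $\p_\nu Z^n(p)\neq0$. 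Since the tangential differentials of $Z^1,\dots,Z^{n-1}$ at $p$ are $du^1,\dots,du^{n-1}$ and $Z^n$ has vanishing tangential but nonzero normal derivative, $DZ(p)$ is block–triangular with nonzero determinant, so by the inverse function theorem $Z=(Z^1,\dots,Z^n)$ is a boundary coordinate system with boundary restriction $(u^1,\dots,u^{n-1},0)$. This proves (3), and (1) follows.

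For the regularity statement (2) I would argue as in the proof of Theorem~\ref{Z-coord}. Setting $\tilde g=f^{4/(n-2)}g$, formula~\eqref{scalar_curvature_scaled2} gives $R(\tilde g)=0$, and the conformal invariance~\eqref{conformal_invariance_of_L} yields, exactly as in~\eqref{additional_regularity}, that each coordinate function is $\tilde g$–harmonic, i.e.
\[
 \p_a\!\left(f^{2}\abs{g}^{1/2}g^{ab}\,\p_bZ^k\right)=0,
\]
where I have used $\abs{\tilde g}^{1/2}\tilde g^{ab}=f^{2}\abs{g}^{1/2}g^{ab}$. After flattening $\p M$ by the chart $u$, this is a homogeneous divergence–form elliptic equation, and boundary Schauder estimates together with the hypothesis that the trace $Z^k|_{\p M}$ is $C^{r+1}$ (respectively $C^\infty$, $C^\omega$) upgrade the interior $C^{r+1}$ regularity to $C^{r+1}$ up to $\p M$; the smooth case follows by bootstrapping and the analytic case from Morrey's analytic boundary regularity. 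For this to give the full gain of one derivative, the coefficient $f^{2}\abs{g}^{1/2}g^{ab}$ must be shown to be $C^r$ \emph{up to} $\p M$, i.e. the conformal factor $f$ must be $C^r$ up to $\p M$.

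Establishing this is the step I expect to be the main obstacle. In the interior $f\in C^r$ is immediate from $L_gf=0$, but near $\p M$ the function $f$ carries no prescribed boundary data, it is determined by the coordinate system only up to a multiplicative constant through the characterization $\Gamma_a=2\,\p_a\log f$ of Proposition~\ref{prop:isothermal}, and it depends \emph{nonlocally} on $g$. The natural handle is that $f^n$ satisfies the homogeneous Dirichlet condition $f^n|_{\p M}=0$ (because $Z^n|_{\p M}=0$ and $f>0$), so $f^n\in C^r$ up to $\p M$ directly from the linear equation $L_gf^n=0$ whose coefficients depend only on $g$; combined with $Z^n=f^n/f$ this gives $f=\p_\nu f^n/\p_\nu Z^n$ on $\p M$. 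The difficulty is that this relation, and the analogous bootstrap through $\Gamma_a=2\,\p_a\log f$, appears to lose one derivative and only delivers $C^{r}$ rather than $C^{r+1}$; closing this gap — showing $f|_{\p M}\in C^{r}$, hence $f\in C^r$ up to $\p M$, by a careful boundary elliptic argument that exploits the $C^{r+1}$ traces $Z^k|_{\p M}$ without the derivative loss — is the delicate core of the proof. The corners where $\Sigma_0$ meets the curved face of $B^+$ cause no trouble, since $p$ lies at the center of the flat face, away from them.
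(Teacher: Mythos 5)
Your treatment of the existence claims (1) and (3) is correct and follows essentially the same route as the paper: pass to a boundary chart, solve Dirichlet problems for $L_g$ (equivalently for $\Delta_{\tilde g}$ with $\tilde g=f^{p-2}g$, which kills the zeroth order term) on a small half-ball, take tangential data $u^1,\dots,u^{n-1}$ and a normal function vanishing on the flat face, and invert $DZ(p)$. The one genuine difference is how the normal coordinate is produced: the paper's Lemma~\ref{prescribed_sols}(2) uses a scaling argument to manufacture $f^n$ with $df^n(0)=dx^n$ exactly, whereas you take positive data on the curved part of $\p B^+$ and invoke the Hopf lemma to get $\p_\nu f^n(p)\neq 0$, which indeed suffices for the block-triangular invertibility of $DZ(p)$. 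This is a legitimate and arguably more elementary alternative. Two small points: the Dirichlet data for $f^1,\dots,f^{n-1}$ must be prescribed on all of $\p B^+$, not only on $\Sigma_0$; and since the zeroth-order coefficient $\frac{n-2}{4(n-1)}R(g)$ of $L_g$ has no sign, the positivity of $f^n$ and the Hopf lemma are cleanest when run for the quotient $f^n/f$, which is $\Delta_{\tilde g}$-harmonic, or via the small-domain maximum principle of~\cite[Proposition 1.1]{BNV94} that the paper itself uses.

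For part (2) your proposal is incomplete exactly where you say it is. You reduce, as the paper does via~\eqref{additional_regularity}, to boundary Schauder for $\p_a\bigl(f^{2}\abs{g}^{1/2}g^{ab}\p_bZ^k\bigr)=0$ with the hypothesized $C^{r+1}$ trace, and this requires the denominator $f$ to be $C^{r}$ up to $\p M$; you flag this as the delicate core and do not close it, and your attempted repair through $f=\p_\nu f^n/\p_\nu Z^n$ is, as you note, circular and loses a derivative. You should be aware that the paper's own proof of (2) is a one-line appeal to~\eqref{additional_regularity} together with~\cite[Theorem 6.19]{GT} and~\cite{MN57}, and does not discuss the boundary regularity of $f$ either; so you have not overlooked an argument that the paper supplies, you have isolated a point it passes over. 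What the paper does make unproblematic is the case of the coordinates constructed in its existence proof: there $f$ solves a Dirichlet problem for $L_g$ with data $\equiv 1$ on a $C^{1,\alpha}$ half-ball, so boundary Schauder applied directly to $L_gf=0$ (whose coefficients depend only on $g\in C^r$) gives $f\in C^r$ up to $\Gamma_\eps$ from the outset, after which the divergence-form equation for $Z^k=f^k/f$ has $C^r$ coefficients up to the boundary and yields $Z^k\in C^{r+1}$ (and $C^\infty$, $C^\omega$ by bootstrapping and~\cite{MN57}). If you want the statement for \emph{arbitrary} conformal harmonic coordinates as in (2), you must either establish boundary regularity of the denominator $f$ or note that it is being assumed; neither your write-up nor the quoted one does the former.
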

\begin{proof}
 Existence in the interior was proven in Theorem~\ref{Z-coord}. Let $p\in \p M$. Let $U=(u^1,\ldots,u^n)$ be a boundary coordinate system on a neighborhood $\Omega$ of $p$, where the coordinate representation of the metric $g$ is of class $C^r$ up to the boundary of the set $U(\Omega)\cap \{x^n\geq 0\}\subset \R^n$. By Lemma~\ref{prescribed_sols}, there is $\eps>0$ and a function $f>0$ so that $L_gf=0$ on $D_\eps:=\overline{B}(0,\eps)\cap \{x^n\geq 0\}$ and $f=1$ on $\Gamma_\eps:=D_\eps\cap \{x^n=0\}$. 
 We scale the metric $g$ as
 \[
  \tilde{g}=f^{p-2}g,
 \]
where $p=\frac{2n}{n-2}$, so that by~\eqref{scalar_curvature_scaled2} we have that
 \[
  L_{\tilde{g}}=\Delta_{\tilde{g}}.
 \]
 Since $\Delta_{\tilde{g}}$ has no zeroth order term, we may solve for $k\in \{1,\ldots, n-1\}$ the boundary value problems (see~\cite[Theorem 8.9]{GT})
\begin{align*}
 L_{\tg}v^k&=0, \mbox{ on } D_\eps\subset \R^n, \\
 v^k &=x^k \mbox{ on } \p D_\eps\subset \R^{n-1} \nonumber.
\end{align*}
We remind that we work in the coordinates $U$, which for instance means that $x^k=u^{k}\circ U^{-1}$. By the conformal invariance~\eqref{conformal_invariance_of_L} of $L_g$ we have that the function
\begin{equation*}\label{v_is_Z_1}
 f^k=fv^k
\end{equation*}
solves $L_gf^k=0$ on $D_\eps$ and $f^k=x^k$ on $\Gamma_\eps$. 

Let $f^n$ be a solution to $L_gf^n=0$ in $D_\eps$ with $f^n= 0$ on $\Gamma_\eps$ and $df^n(0)=dx^n$. The existence of $f^n$ is guaranteed by Lemma~\ref{prescribed_sols}. Let us define
\begin{equation*}\label{v_is_Z_2}
 Z^k=\frac{f^k}{f}, \quad k=1,\ldots,n.
\end{equation*}
Then $DZ(0)=I_{n\times n}$ by the equation~\eqref{differential_of_Z}. Thus $Z=(Z^1,\ldots,Z^n)$ is a conformal harmonic coordinate system on a neighborhood of $p$. It is also a boundary coordinate system and it agrees with $(u^1,\ldots,u^{n-1},0)$ on the boundary since $Z^k|_{\Gamma_\eps}=x^k$. 
The claim about $C^{r+1}$, $C^\infty$ or $C^\omega$ regularity follows from the equation~\eqref{additional_regularity} and boundary regularity results for elliptic equations, see e.g.~\cite[Theorem 6.19]{GT} and~\cite[Theorem A]{MN57}. 
\end{proof}



 We next observe that conformal harmonic coordinates are harmonic coordinates for a conformal metric that has zero scalar curvature. Finding the conformal scaling can be thought as solving the Yamabe problem
 \[
  L_gf=\Delta_g + \frac{n-2}{4(n-1)}R(g)=0 
 \]
 locally for a zero Yamabe constant. If $f>0$ solves the above, then $R(f^{p-2}g)=f^{1-p}\left(4\frac{n-1}{n-2}\Delta_gf+R(g)f\right)=0$. Here $p=\frac{2n}{n-2}$ as usual. By Lemma~\ref{prescribed_sols} such an $f$ can always be found on a sufficiently small domain.
 
 Recall that a coordinate system $U=(u^1,\ldots,u^n)$ is harmonic with respect to a Riemannian metric $g$ if all the coordinate functions $u^k$ are harmonic:
 \begin{equation}\label{harmonic_coordinates}
  \Delta_gu^k=0, \quad k=1,\ldots,n.
 \end{equation}
We refer to~\cite{DK} for more details on harmonic coordinates. In the rest of this section the Riemannian metric $g$ is assumed to be $C^r$, $r>2$, without further notice.  

\begin{Proposition}\label{equivalence}
 Let $(M,g)$ be a Riemannian manifold possibly with boundary. Let $Z=(Z^1,\ldots,Z^n)$ be coordinates on an open set $\Omega\subset M$. 
 Then $Z$ are conformal harmonic coordinates on $\Omega$ if and only if there is a function $f>0$ such that $Z$ are harmonic coordinates with respect to the conformal metric $f^{p-2}g$ whose scalar curvature vanishes, $R(f^{p-2}g)=0$. 
\end{Proposition}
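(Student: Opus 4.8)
The plan is to derive the equivalence directly from the conformal invariance~\eqref{conformal_invariance_of_L} of the conformal Laplacian together with the scalar curvature scaling formula~\eqref{scalar_curvature_scaled2}, exactly as these were combined in~\eqref{additional_regularity}. The single computation I would set up first records that, for any $f>0$ and $\tg=f^{p-2}g$ with $p=\frac{2n}{n-2}$ (so that $p-2=\frac{4}{n-2}$ and the factor $c^{(n-2)/4}=f^{(p-2)\frac{n-2}{4}}=f$ for $c=f^{p-2}$), the invariance reads $L_{\tg}u=f^{1-p}L_g(fu)$, while the scaling formula gives $R(\tg)=\frac{4(n-1)}{n-2}f^{1-p}L_gf$. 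Since $f>0$, the latter shows that $R(\tg)=0$ is equivalent to $L_gf=0$, and in that case $L_{\tg}=\Delta_{\tg}$ because the zeroth order term of $L_{\tg}$ drops out. These two observations do all the work.

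For the forward implication I would suppose $Z$ are conformal harmonic coordinates, so $Z^k=f^k/f$ with $f>0$ and $L_gf^k=L_gf=0$, and take $\tg=f^{p-2}g$. The vanishing of $L_gf$ gives $R(\tg)=0$, hence $L_{\tg}=\Delta_{\tg}$; and then $\Delta_{\tg}Z^k=L_{\tg}(f^k/f)=f^{1-p}L_g\bigl(f\cdot\tfrac{f^k}{f}\bigr)=f^{1-p}L_gf^k=0$. Thus $Z$ are harmonic coordinates for the zero scalar curvature metric $\tg$, as required.

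For the converse I would suppose some $f>0$ makes $Z$ harmonic for $\tg=f^{p-2}g$ with $R(\tg)=0$. The scaling formula then forces $L_gf=0$, and $R(\tg)=0$ again yields $L_{\tg}=\Delta_{\tg}$. Defining $f^k:=fZ^k$, the invariance gives $L_gf^k=f^{p-1}L_{\tg}Z^k=f^{p-1}\Delta_{\tg}Z^k=0$. Together with $L_gf=0$ and $f>0$, this exhibits $Z^k=f^k/f$ in the quotient form of Definition~\ref{def:Z_coords}, so $Z$ are conformal harmonic coordinates.

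The argument is essentially bookkeeping of the two conformal identities, so I do not expect a genuine obstacle; the only points needing care are the exponent arithmetic that makes $c^{(n-2)/4}=f$ when $c=f^{p-2}$, and the reduction $L_{\tg}=\Delta_{\tg}$ under $R(\tg)=0$, which is precisely what converts the conformal Laplace equation into the harmonic coordinate condition. Finally, when $p\in\p M$ the extra clause in Definition~\ref{def:Z_coords} that $Z$ be a boundary coordinate system is a condition on the functions $Z^k$ alone and is untouched by the choice of $f$, so it carries over verbatim on both sides of the equivalence.
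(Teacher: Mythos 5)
Your proof is correct and follows essentially the same route as the paper: both directions rest on the conformal invariance $L_{\tg}u=f^{1-p}L_g(fu)$ and the scaling formula $R(\tg)=\tfrac{4(n-1)}{n-2}f^{1-p}L_gf$, with the forward step computing $\Delta_{\tg}(f^k/f)=f^{1-p}L_gf^k$ and the converse writing $Z^k=(fZ^k)/f$. The only cosmetic difference is that you deduce $L_gf=0$ directly from $R(\tg)=0$ via the scaling formula, whereas the paper obtains it as $L_gf=f^{p-1}\Delta_{\tg}(1)=0$; both are immediate from the same identities.
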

\begin{proof} 
 Assume that $Z=(Z^1,\ldots,Z^n)$ are conformal harmonic coordinates on an open set $\Omega$. By definition, we have that
 \[
 Z^k=\frac{f^k}{f}, \quad k=1,\ldots, n, 
 \]
 where $f^k$ and $f$ solve $L_gf^k=L_gf=0$, with $f>0$.
 The scalar curvature transforms under the conformal scaling $\tilde{g}=f^{p-2}g$, $p=\frac{2n}{n-2}$, 
 by the formula~\eqref{scalar_curvature_scaled2}
 \begin{equation}\label{yamabe_scaling}
 R(\tilde{g})=f^{1-p}\left(4\frac{n-1}{n-2}\Delta_gf+R(g)f\right).
 \end{equation}
 Thus we have $R(f^{p-2}g)=0$, since the function $f$ solves
 \[
L_gf=\Delta_gf +\frac{n-2}{4(n-1)} R(g)f=0.
 \]
 
 We prove that $(Z^1,\ldots,Z^n)$ is a harmonic coordinate system for the conformal metric $\tilde{g}=f^{p-2}g$. To prove it, we need to show that each $Z^k$, $k=1,\ldots,n$, satisfies $\Delta_{\tilde{g}}Z^k=0$. The conformal Laplacian transforms under a conformal scaling by the equation~\eqref{conformal_invariance_of_L} as
 \begin{equation}\label{L_conformal_trans}
 L_{\tilde{g}}(f^{-1}u)=f^{1-p}L_gu.
 \end{equation} 
 We calculate
 \[
 \Delta_{\tilde{g}}Z^k=L_{\tilde{g}}\frac{f^k}{f}=f^{1-p}L_gf^k=0.
 \]
 Here in the first equality we have used the fact that $R(\tilde{g})=0$. Thus $(Z^1,\ldots,Z^n)$ are harmonic coordinates for the conformal metric $\tilde{g}=f^{p-2}g$ with $R(\tilde{g})=0$ on $\Omega$.
 
 To prove the opposite implication of the propositions claim, assume that there is a positive function $f$ on $\Omega$ such that
 \[
 R(f^{p-2}g)=0,
 \]
 and that $Z^k$ solves
 \[
 \Delta_{f^{p-2}g}Z^k=0, \quad k=1,\ldots,n.
 \]
 Let us write
 \begin{equation}\label{num_denom}
 Z^k=\frac{fZ^k}{f}
 \end{equation}
 and denote $\tilde{g}=f^{p-2}g$ as usual.
 We need to show that both the numerator and the denominator satisfy the conformal Laplace equation for the metric $g$. By the equation~\eqref{L_conformal_trans} and the fact that $R(\tilde{g})=0$, it follows that
 \[
 L_g(fZ^k)=f^{p-1}L_{\tilde{g}}(f^{-1}fZ^k)=f^{p-1}\Delta_{\tilde{g}}Z^k=0.
 \]
 For the function $f$ in the denominator of~\eqref{num_denom}, we have similarly $L_g(f)=f^{p-1}\Delta_{\tilde{g}}(1)=0$.
 Thus $(Z^1,\ldots,Z^n)$ are conformal harmonic coordinates with respect to the metric $g$.
\end{proof}

%
We will next show that 
a necessary and sufficient condition for coordinates to be conformal harmonic coordinates is the condition 
\[
 \Gamma_a=2\p_a \log f
\]
for the contracted Christoffel symbols $\Gamma_a$.  
Here $f$ is a positive function satisfying $L_gf=0$. Contracted Christoffel symbols are defined as $\Gamma^a=g^{bc}\Gamma_{bc}^a$ and $\Gamma_a=g_{ab}\Gamma^b$, see Appendix~\eqref{list_of_formulas}.
\begin{Proposition}\label{gauge_cond_of_Z}
Let $(M,g)$ be a Riemannian manifold possibly with boundary. Let $Z=(Z^1,\ldots,Z^n)$ be coordinates on an open set $\Omega\subset M$. 
 Then $Z$ are conformal harmonic coordinates on $\Omega$ if and only if
the contracted Christoffel symbols $\Gamma_a$, $a=1,\ldots, n$, in the coordinates $Z$ satisfy
 \begin{equation}\label{prop_gauge_cond}
 \Gamma_a=2\p_a \log f
 \end{equation}
 for some positive function $f$ on $\Omega$ that satisfies $L_gf=0$.
\end{Proposition}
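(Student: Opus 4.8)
The plan is to reduce everything to the characterization already obtained in Proposition~\ref{equivalence}, which says that $Z$ are conformal harmonic coordinates if and only if there is a positive function $f$ with $R(f^{p-2}g)=0$ for which $Z$ are harmonic coordinates for the conformal metric $\tilde g=f^{p-2}g$. Since the scalar curvature scaling formula~\eqref{scalar_curvature_scaled2} shows that $R(f^{p-2}g)=0$ is equivalent to $L_gf=0$, it suffices to prove that, for a fixed positive $f$ with $L_gf=0$, the coordinates $Z$ are $\tilde g$-harmonic if and only if the gauge condition $\Gamma_a=2\partial_a\log f$ holds in $Z$.

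First I would record the standard fact that a coordinate system is harmonic for a metric precisely when its contracted Christoffel symbols vanish. Writing the Laplace--Beltrami operator in divergence form and applying it to the coordinate functions $Z^k$ gives $\Delta_{\tilde g}Z^k=\tilde\Gamma^k$, where $\tilde\Gamma^a$ denotes the contracted Christoffel symbols of $\tilde g$; hence harmonicity of $Z$ for $\tilde g$ is the same as $\tilde\Gamma^a=0$ for every $a$.

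The computational heart is the conformal transformation law for the contracted Christoffel symbols. Using the pointwise transformation of $\Gamma^a_{bc}$ under $\tilde g=e^{2\phi}g$ from the list of formulas in the appendix and contracting with $\tilde g^{bc}=e^{-2\phi}g^{bc}$, I would obtain $\tilde\Gamma^a=e^{-2\phi}\bigl(\Gamma^a+(2-n)g^{ad}\partial_d\phi\bigr)$. Here $\tilde g=f^{p-2}g=f^{4/(n-2)}g$, so $\phi=\tfrac{2}{n-2}\log f$ and $(2-n)\partial_d\phi=-2\partial_d\log f$, giving $\tilde\Gamma^a=f^{-4/(n-2)}\bigl(\Gamma^a-2g^{ad}\partial_d\log f\bigr)$. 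Consequently $\tilde\Gamma^a=0$ is equivalent to $\Gamma^a=2g^{ad}\partial_d\log f$, and lowering the index with $g_{ab}$ yields exactly $\Gamma_a=2\partial_a\log f$.

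Combining the two directions finishes the argument: if $Z$ are conformal harmonic, Proposition~\ref{equivalence} supplies $f>0$ with $L_gf=0$ (equivalently $R(\tilde g)=0$) for which $Z$ are $\tilde g$-harmonic, so the gauge condition holds with this $f$; conversely, given the gauge condition for some $f>0$ with $L_gf=0$, the computation forces $\tilde\Gamma^a=0$, so $Z$ are harmonic for the zero-scalar-curvature metric $\tilde g$, and Proposition~\ref{equivalence} returns that $Z$ are conformal harmonic. I expect the only delicate point to be bookkeeping rather than anything conceptual: ensuring that the single function $f$ threads consistently through the equivalence $L_gf=0\Leftrightarrow R(\tilde g)=0$ and through both implications, and checking the exponent and sign arithmetic $p-2=\tfrac{4}{n-2}$ and $(2-n)\tfrac{2}{n-2}=-2$ so that the factor of $2$ in the gauge condition emerges exactly.
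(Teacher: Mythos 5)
Your proposal is correct and follows essentially the same route as the paper: both directions are reduced to Proposition~\ref{equivalence}, harmonicity is identified with the vanishing of the contracted Christoffel symbols, and the key identity $\Gamma_a(f^{p-2}g)=\Gamma_a(g)-2\p_a\log f$ is established (you derive it from the pointwise transformation of $\Gamma^c_{ab}$ under $e^{2\phi}g$ and then lower the index, while the paper computes directly from the divergence-form expression~\eqref{stnd_contracted}, but these are the same calculation). The sign and exponent bookkeeping you flag does check out, so no gap remains.
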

\begin{proof}
Assume first that $(Z^1,\ldots,Z^n)$ are conformal harmonic coordinates on $\Omega$. Denote as usual $\tilde{g}=f^{p-2}g$, where the function $f>0$ solves $L_gf=0$ on $\Omega$. By Proposition~\ref{equivalence}, we have that $Z$ are harmonic coordinates for the metric $\tilde{g}$. Coordinates $(Z^k)$ are harmonic coordinates for the metric $\tilde{g}$ if and only if $\Gamma_a(\tilde{g})=0$, $a=1,\ldots,n$. This is well known~\cite{DK} and it follows from the calculation 
\begin{equation}\label{suf_and_nec_for_harm}
0=\Delta_{\tilde{g}}Z^a=-\tilde{g}^{bc}(\p_b\p_cZ^a-\Gamma_{bc}^d(\tilde{g})\p_dZ^a)=\tilde{g}^{bc}\Gamma_{bc}^d(\tilde{g})\delta_d^a=\Gamma^a(\tilde{g}).
\end{equation}

By lowering the index of $\Gamma^a$, we have in the coordinates $Z$ that
 \begin{equation}\label{gamma_scaled}
 \Gamma_a(\tilde{g})=\Gamma_a(f^{p-2}g)=0.
 \end{equation}
The contracted Christoffel symbols can be written by the formula~\eqref{stnd_contracted} as
\[
\Gamma_a(g) = g^{bc} \partial_b g_{ac} - \frac{1}{2} \partial_a(\log\abs{g})
\]
If $c$ is positive function, we have that
\begin{align}\label{conforal_scaling_of_contracted_christoffel}
 \Gamma_a(cg)&=(cg)^{bc} \partial_b (cg)_{ac} - \frac{1}{2} \partial_a(\log\abs{cg}) \nonumber \\
 &=g^{bc} \partial_b g_{ac}+\frac{1}{c}\p_a\log c -\frac{1}{2} \partial_a(\log\abs{g}+\partial_a\log c^n) \nonumber \\
 &=\Gamma_a(g)-\frac{n-2}{2}\p_a\log c.
\end{align}
Applying this formula for $c=f^{p-2}$ yields
\begin{equation}\label{scaled_gamma_pm2}
 \Gamma_a(f^{p-2}g)=\Gamma_a(g)-\frac{n-2}{2}(p-2)\p_a\log f=\Gamma_a(g)-2\p_a\log f,
\end{equation}
which in combination with~\eqref{gamma_scaled} yields~\eqref{prop_gauge_cond}.

 To see the opposite implication, assume that in a system $Z=(Z^1,\ldots, Z^n)$ of coordinates holds $\Gamma_a=2\p_a\log f$, $a=1,\ldots,n$, and where $f$ satisfies $L_gf=0$. The equation~\eqref{scaled_gamma_pm2} shows that $\Gamma_a(f^{p-2}g)=0$.
 Since coordinates are harmonic if and only if the contracted Christoffel symbols vanish, we have that 
 $(Z^1,\ldots, Z^n)$ are harmonic coordinates for the metric $f^{p-2}g$. 
 As we have by now seen many times, the conformal metric $f^{p-2}g$ has zero scalar curvature by the formula
 \[
 R(f^{p-2}g)=f^{1-p}\left(4\frac{n-1}{n-2}\Delta_gf+R(g)f\right).
 \]
 Thus $Z$ are conformal harmonic coordinates by Proposition~\ref{equivalence}.
\end{proof}

The next proposition shows that the coordinate condition~\eqref{prop_gauge_cond} for the contracted Christoffel symbols is in a specified sense conformally invariant.
\begin{Proposition}\label{Z_cond_conf_inv}
 Let $(M,g)$ be a Riemannian manifold possibly with boundary. Let $(Z^1,\ldots,Z^n)$ be conformal harmonic coordinates on an open set $\Omega\subset M$.
 Let $c\in C^r(M)$, $r>2$, be a positive function. The contracted Christoffel symbols for the conformally scaled metric $cg$ in the coordinates $(Z^1,\ldots,Z^n)$ satisfy
 \begin{equation}\label{scaled_Cs}
 \Gamma_a(cg)=2\p_a \log f_{cg} 
 \end{equation}
 for $a=1,\ldots,n$,  where 
 \begin{equation}\label{conformal_f}
f_{cg}=c^{-\frac{n-2}{4}}f_g  
 \end{equation}
is a positive function that satisfies
 \begin{equation}\label{scaled_eq}
 L_{cg}f_{cg}=0.
 \end{equation}
\end{Proposition}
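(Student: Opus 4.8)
The plan is to deduce the statement directly from the characterization already obtained in Proposition~\ref{gauge_cond_of_Z}, combined with the conformal scaling formula~\eqref{conforal_scaling_of_contracted_christoffel} for contracted Christoffel symbols and the conformal invariance~\eqref{conformal_invariance_of_L} of the conformal Laplacian. Since $(Z^1,\ldots,Z^n)$ are conformal harmonic coordinates for $g$, Proposition~\ref{gauge_cond_of_Z} furnishes a positive function, which I will call $f_g$, satisfying $L_gf_g=0$ and $\Gamma_a(g)=2\p_a\log f_g$ in the coordinates $Z$. I would then simply define $f_{cg}=c^{-\frac{n-2}{4}}f_g$ as in~\eqref{conformal_f}; positivity of $f_{cg}$ is immediate from $c>0$ and $f_g>0$. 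The remaining task is to verify the two asserted properties: the Christoffel condition~\eqref{scaled_Cs} and the equation~\eqref{scaled_eq}.

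For the Christoffel condition, I would apply~\eqref{conforal_scaling_of_contracted_christoffel} with the conformal factor $c$ to write $\Gamma_a(cg)=\Gamma_a(g)-\frac{n-2}{2}\p_a\log c$, substitute $\Gamma_a(g)=2\p_a\log f_g$, and compare with the expansion
\[
2\p_a\log f_{cg}=2\p_a\Big(-\tfrac{n-2}{4}\log c+\log f_g\Big)=2\p_a\log f_g-\frac{n-2}{2}\p_a\log c.
\]
The two expressions coincide termwise, which yields~\eqref{scaled_Cs}. For~\eqref{scaled_eq}, I would invoke the conformal invariance~\eqref{conformal_invariance_of_L} in the form $L_{cg}u=c^{-\frac{n+2}{4}}L_g(c^{\frac{n-2}{4}}u)$ with $u=f_{cg}$. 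Because $c^{\frac{n-2}{4}}f_{cg}=f_g$, this collapses to $L_{cg}f_{cg}=c^{-\frac{n+2}{4}}L_gf_g=0$, using $L_gf_g=0$.

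I do not expect any genuine obstacle; the content of the argument is purely the consistency of two weights. The only point requiring care is that the exponent $\tfrac{n-2}{4}$ appearing in $f_{cg}=c^{-\frac{n-2}{4}}f_g$ is exactly the one dictated by both the conformal weight in~\eqref{conforal_scaling_of_contracted_christoffel} and the conformal weight in~\eqref{conformal_invariance_of_L}; it is precisely this matching (equivalently, the identity $\frac{n-2}{2}(p-2)=2$ with $p=\frac{2n}{n-2}$ already used in~\eqref{scaled_gamma_pm2}) that makes a single choice of $f_{cg}$ simultaneously solve the Christoffel gauge condition and the conformal Laplace equation for $cg$. Verifying this compatibility is the crux, and once it is checked the proposition follows by the two short computations above.
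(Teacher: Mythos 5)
Your proposal is correct and follows essentially the same route as the paper's proof: both take $f_g$ from Proposition~\ref{gauge_cond_of_Z}, define $f_{cg}=c^{-\frac{n-2}{4}}f_g$, verify $L_{cg}f_{cg}=0$ via the conformal invariance of $L$ (the identity you write is the form~\eqref{conformal_invariance_of_L2}), and match $\Gamma_a(cg)$ with $2\p_a\log f_{cg}$ using~\eqref{conforal_scaling_of_contracted_christoffel}. No gaps.
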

\begin{proof}
 Let $(Z^1,\ldots,Z^n)$ be conformal harmonic coordinates on an open set $\Omega\subset M$. We have $\Gamma(g)_a=2\p_a \log f_{g}$ by Proposition~\ref{gauge_cond_of_Z}, where $L_gf_g=0$, $f_g>0$. 
The function
\[
f_{cg}=c^{-\frac{n-2}{4}}f_g
\]
satisfies
 \begin{equation}\label{f_gtilde_eq}
 L_{cg}f_{cg}=c^{-\frac{n+2}{4}}L_gf_g=0
 \end{equation}
by the conformal invariance~\eqref{conformal_invariance_of_L2} of the conformal Laplacian. The right hand side of~\eqref{scaled_Cs} now reads
\begin{equation}\label{trivial_identity}
2\p_a\log(c^{-\frac{n-2}{4}}f_g)=-\frac{n-2}{2}\p_a\log c +2\p_a\log f_g.
\end{equation}
By combining~\eqref{conforal_scaling_of_contracted_christoffel} and~\eqref{trivial_identity} and the coordinate condition $\Gamma(g)_a=2\p_a \log f_{g}$ of conformal harmonic coordinates,  we have that
\begin{align*}
\Gamma_a(cg)&=\Gamma_a(g)-\frac{n-2}{2}\p_a\log c=\Gamma_a(g)+2\p_a\log(c^{-\frac{n-2}{4}}f_g)-2\p_a\log f_g \\
&=2\p_a\log f_{cg},
\end{align*}
where $L_{cg}f_{cg}=0$ by~\eqref{f_gtilde_eq}. This proves the claim.
\end{proof}
A consequence of the results above is that conformal harmonic coordinates, and especially their smoothness, are conformally invariant. 
\begin{Proposition}\label{Z_coords_conformally_invariant}
 Let $(M,g)$ be a Riemannian manifold possibly with boundary. Assume that $c\in C^r(M)$, $r>2$, is a positive function. Let $Z$ be conformal harmonic coordinates defined with respect to $g$. Then $Z$ are conformal harmonic coordinates for the metric $cg$.
 
 If $g$ is in $C^{k+\alpha}$, $C^\infty$ or $C^\omega$, 
 then all conformal harmonic coordinates  for the conformal metric $cg$ are $C^{k+1+\alpha}$, $C^\infty$ or $C^\omega$ respectively. Here we assume $k\in \N$, $k\geq 2$ and $\alpha\in (0,1)$. (In the case of conformal harmonic coordinates on a neighborhood of a boundary point, we also assume that the restrictions of the coordinates to the boundary are $C^{k+1+\alpha}$, $C^\infty$ or $C^\omega$.) 
\end{Proposition}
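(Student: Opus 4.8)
The plan is to read off both assertions from the characterizations already in hand, namely the contracted-Christoffel-symbol criterion of Proposition~\ref{gauge_cond_of_Z} and the transformation law recorded in Proposition~\ref{Z_cond_conf_inv}; no new estimate should be needed.

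First I would establish the first assertion. Starting from the assumption that $Z$ is conformal harmonic for $g$, Proposition~\ref{Z_cond_conf_inv} gives directly, in these same coordinates $Z$, that $\Gamma_a(cg)=2\p_a\log f_{cg}$ with $f_{cg}=c^{-\frac{n-2}{4}}f_g>0$ and $L_{cg}f_{cg}=0$. This is exactly the hypothesis of Proposition~\ref{gauge_cond_of_Z} for the metric $cg$, so that proposition lets me conclude that $Z$ is conformal harmonic for $cg$. In the boundary case there is nothing further to verify: being a boundary coordinate system is a property of the map $Z$ alone and is unaffected by rescaling the metric, and $Z$ is already such a system since it is conformal harmonic for $g$.

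Next I would upgrade the first assertion to an equivalence, which is the mechanism behind the regularity claim. Applying the first assertion once to the pair $(g,c)$ and once to the pair $(cg,1/c)$ --- legitimate because $c>0$ forces $1/c$ to be a positive $C^r$ function with $r>2$ --- shows that $Z$ is conformal harmonic for $cg$ if and only if it is conformal harmonic for $g$. Hence the collection of conformal harmonic coordinate systems for $cg$ coincides with that for $g$. I would then invoke the regularity parts of Theorem~\ref{Z-coord} in the interior and Theorem~\ref{Z-coord_boundary} near the boundary: when $g\in C^{k+\alpha}$ (respectively $C^\infty$, $C^\omega$), every conformal harmonic coordinate system for $g$ is $C^{k+1+\alpha}$ (respectively $C^\infty$, $C^\omega$), and by the equivalence the same holds for every such system for $cg$. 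The parenthetical boundary hypothesis on the restrictions of the coordinates to the boundary is precisely what Theorem~\ref{Z-coord_boundary} requires in its regularity part.

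The one point deserving care --- and essentially the only subtlety --- is that the regularity is asserted in terms of $g$ and not of $cg$. Since $c$ is only assumed $C^r$ with $r>2$, the product $cg$ may be strictly less regular than $g$, so one cannot obtain the conclusion by applying the existence theorems to $cg$ directly. The gain in regularity is not produced by solving any equation for $cg$; it comes entirely from the conformal invariance of the coordinate class, which identifies the $cg$-conformal-harmonic coordinates with the $g$-conformal-harmonic ones, whose regularity is governed by the better metric $g$. This is where the content of the statement sits, and it is handled by the equivalence above rather than by any further analysis.
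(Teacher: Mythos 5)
Your proposal is correct and follows essentially the same route as the paper: the first assertion via Propositions~\ref{Z_cond_conf_inv} and~\ref{gauge_cond_of_Z} (the paper also notes the alternative of rewriting $Z^k$ as a quotient of $L_{cg}$-solutions), and the regularity via part (2) of Theorems~\ref{Z-coord} and~\ref{Z-coord_boundary}. Your only addition is to spell out, by applying the first assertion to $(cg,1/c)$, why those regularity theorems may be invoked for the better metric $g$ rather than for $cg$ --- a step the paper leaves implicit but which is exactly the intended mechanism.
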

\begin{proof}
Let $Z$ be conformal harmonic coordinates defined with respect to $g$. By Proposition~\ref{Z_cond_conf_inv}, we have in the coordinates $Z$ that $\Gamma_a(cg)=2\p_a \log f_{cg}$,
where 
$L_{cg}f_{cg}=0$. Thus by Proposition~\ref{gauge_cond_of_Z} we have that $Z$ are conformal harmonic coordinate system for the metric $cg$ as claimed. (Alternatively one can write $Z^k=f^k/f=(c^{-\frac{n-2}{4}}f^k)/(c^{-\frac{n-2}{4}}f)$
and notice that $L_{cg}\big(c^{-\frac{n-2}{4}}f^k\big)=L_{cg}\big(c^{-\frac{n-2}{4}}f\big)=0$ to see that the definition of conformal harmonic coordinates is satisfied.)
The claim about regularity follows from part (2) of Theorem~\ref{Z-coord}, or from part (2) of Theorem~\ref{Z-coord_boundary} if $M$ has boundary.
\end{proof}

\color{black}

We end this section by noticing that conformal harmonic coordinates are a generalization of isothermal coordinates. 
Recall that coordinates $(x^1,\ldots,x^n)$ on an open set are isothermal if the condition
\begin{equation}\label{isotherm}
 g_{ab}(x)=c(x)\,\delta_{ab}
\end{equation}
holds in the coordinates $(x^a)$ for some positive function $c$. We consider here only manifolds without boundary for simplicity.
\begin{Proposition}\label{prop:isothermal}
 Let $(M,g)$ be a Riemannian manifold without boundary. 
 Assume that coordinates $(x^1,\ldots,x^n)$ are isothermal in the sense of~\eqref{isotherm}, with $c\in C^r$, $r>2$. Then $(x^1,\ldots,x^n)$ are conformal harmonic coordinates.
\end{Proposition}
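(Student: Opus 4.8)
The plan is to exhibit, for the given isothermal coordinates, an explicit positive function $f$ that renders the metric flat after conformal scaling, and then to invoke Proposition~\ref{equivalence}. The natural candidate is dictated by the requirement $f^{p-2}g=\delta$, where $p=\frac{2n}{n-2}$ and $\delta$ denotes the Euclidean metric. Since $g_{ab}=c\,\delta_{ab}$ by~\eqref{isotherm} and $p-2=\frac{4}{n-2}$, choosing
\[
 f=c^{-\frac{n-2}{4}}
\]
gives $f^{p-2}=c^{-1}$, and hence $f^{p-2}g=c^{-1}(c\,\delta)=\delta$. This $f$ is positive and of class $C^r$ because $c>0$ is in $C^r$.

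Next I would observe that the Euclidean metric is flat, so $R(\delta)=0$, and that in the isothermal coordinates the coordinate functions are harmonic for $\delta$, i.e. $\Delta_\delta x^k=-\p_a\p_a x^k=0$. Equivalently, $\Delta_{f^{p-2}g}x^k=0$ with $R(f^{p-2}g)=0$. By Proposition~\ref{equivalence} this is exactly the assertion that $(x^1,\ldots,x^n)$ are conformal harmonic coordinates, which finishes the argument. As a byproduct, the scaling formula~\eqref{scalar_curvature_scaled2} shows that $R(f^{p-2}g)=0$ is equivalent to $L_gf=0$, confirming that $f$ is an admissible denominator in the quotient~\eqref{Z_form1}.

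Alternatively, one can route the argument through Proposition~\ref{gauge_cond_of_Z}: a direct computation of the contracted Christoffel symbols of $g_{ab}=c\,\delta_{ab}$ from $\Gamma_a(g)=g^{bc}\p_b g_{ac}-\tfrac12\p_a\log\abs{g}$ together with $\abs{g}=c^{\,n}$ yields $\Gamma_a(g)=-\tfrac{n-2}{2}\p_a\log c=2\p_a\log\!\big(c^{-\frac{n-2}{4}}\big)=2\p_a\log f$, which is precisely the coordinate condition~\eqref{prop_gauge_cond} with the same $f$. I expect no genuine obstacle in this proposition; the only point requiring care is the exponent bookkeeping that collapses $f^{p-2}g$ to $\delta$, after which either route is immediate.
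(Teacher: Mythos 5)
Your proposal is correct and matches the paper's proof: the paper's main argument is your "alternative" route via Proposition~\ref{gauge_cond_of_Z} (computing $\Gamma_a(g)=2\p_a\log c^{\frac{2-n}{4}}$ and checking $L_gf=0$ by conformal invariance), while your primary route via Proposition~\ref{equivalence} with $f^{p-2}g=\delta$ is exactly the paper's parenthetical alternative. Both verifications are carried out correctly, so nothing is missing.
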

\begin{proof}
 Assume that $(x^1,\ldots,x^n)$ are isothermal coordinates, $g_{ab}=c\,\delta_{ab}$, on an open subset of $M$. 
By Proposition~\ref{gauge_cond_of_Z}, we only need to show that the coordinate condition 
 \[
 \Gamma_k(g)=2\p_k\log f
 \]
 of conformal harmonic coordinates holds in the coordinates $(x^k)$ for some positive function $f$ satisfying $L_gf=0$.
 Denote by $e$ the Euclidean metric with components $e_{ab}=\delta_{ab}$. By the formula~\eqref{conforal_scaling_of_contracted_christoffel} we have in the isothermal coordinates $(x^k)$ that
 \begin{align*}
 \Gamma_a(g)&=\Gamma_a(ce)=\Gamma_a(e)-\frac{n-2}{2}\p_a\log(c)=2\p_a\log c^{\frac{2-n}{4}}.
 \end{align*}
 We set $f=c^{\frac{2-n}{4}}$. Since the scalar curvature of the Euclidean metric $e$ vanishes, we have by~\eqref{conformal_invariance_of_L2} that
 \[
 L_gf=L_{ce}f=c^{-\frac{n+2}{4}}L_{e}(c^{\frac{n-2}{4}}c^{\frac{2-n}{4}})=c^{-\frac{n+2}{4}}\Delta_{e}(1)=0.
 \]
 By Proposition~\ref{gauge_cond_of_Z}, the coordinates $(x^k)$ are conformal harmonic coordinates. (The claim alternatively follows from Proposition~\ref{equivalence} by noticing that if $
  \tilde{g}=f^{p-2}g
  =c^{-1}g=e$,
 then trivially $R(\tilde{g})=0$ and the condition of harmonic coordinates $\Gamma_a(\tilde{g})=0$ also holds.)\qedhere
\end{proof}
For smooth enough metrics a necessary and sufficient condition for the existence of isothermal coordinates is that the Weyl and Cotton curvature tensor vanishes in dimension $n\geq 4$ and $n=3$ respectively. (For the required smoothness, see~\cite{LS2}.) Since conformal harmonic coordinates always exist, we conclude that conformal harmonic coordinates are not always isothermal.

\section{Applications of conformal harmonic coordinates}\label{applications_of_conf_harm}
In this section we give applications of conformal harmonic coordinates. We begin with a regularity result for conformal mappings, which contains a boundary regularity result. Then we prove a unique continuation result for conformal mappings. These are Theorems~\ref{reg_of_conformal_maps} and~\ref{ucp_for_conformal}.

After the first applications we move on to study conformal curvature tensors. We show that conformal curvature tensors such as the Bach and Fefferman-Graham obstruction tensors can be regarded as elliptic operators for a determinant normalized metric in conformal harmonic coordinates. 

We prove an elliptic regularity result for conformal curvature tensors in conformal harmonic coordinates in Theorem~\ref{regularity_thm_interior}. This shows for example that obstruction flat manifold are locally real analytic up to a conformal factor. We use the real analyticity to show that if on a Bach or obstruction flat manifold a point has a neighborhood conformal to the Euclidean space, then the same property holds for all points of the manifold. We also give a local conformal embedding result of obstruction flat manifolds into Euclidean space.
%
These are Theorems~\ref{ucp_for_Bach_flat} and~\ref{conformal_Cartan_Janet}.

In this section the Riemannian metrics are assumed to be in $C^r$, $r>2$, up to boundary when applicable, without further notice. As before, if $g\in C^r$, we assume that the atlas of the manifold is $C^{r+1}$. We also assume that dimensions of the manifolds are $\geq 3$.

\subsection{Regularity and unique continuation of conformal mappings}\label{reg_ucp_for_conf}
The regularity result for conformal mappings follows from the fact that conformal harmonic coordinates are preserved by conformal mappings. 
\begin{Lemma}\label{conf_preserves_Z}
 Let $(M,g)$ and $(N,h)$ be Riemannian $n$-dimensional manifolds possibly with boundaries, $n\geq 3$. Assume 
 that $F:(M,g)\to (N,h)$ is $C^{3}$ smooth locally diffeomorphic conformal mapping, $F^*h=c\,g$,
 where $c$ is a function on $M$. If $\p M\neq \emptyset$, we also assume that $F$ is $C^3$ up to boundary and that $F(\p M)\subset \p N$.
 Then $F$ pulls back any function of the form
 \[
  \frac{u_2}{v_2}, 
 \]
 where $u_2$ and $v_2> 0$ are locally defined functions on $N$ satisfying $L_hu_2=L_hv_2=0$,
  to a function of the form
 \[
  \frac{u_1}{v_1},
 \]
 where $L_gu_1=L_gv_1=0$ and  $v_1>0$.
  Especially $F$ pulls back conformal harmonic coordinates on $(N,h)$ to conformal harmonic coordinates on $(M,g)$.
\end{Lemma}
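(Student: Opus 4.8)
The plan is to exploit two properties of the conformal Laplacian: its \emph{naturality} under diffeomorphisms, and its \emph{conformal invariance} as recorded in the introduction. Together these turn a solution of the conformal Laplace equation on $N$ into a solution on $M$ after multiplication by a suitable power of the conformal factor, and the power cancels in the quotient.

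First I would check that the conformal factor $c$ is positive. Since $F$ is locally diffeomorphic, $dF$ is injective, so for any nonzero tangent vector $v$ we have $h(dF(v),dF(v))>0$ because $h$ is positive definite; as $(F^*h)(v,v)=c\,g(v,v)$ with $g(v,v)>0$, this forces $c>0$. Positivity is what lets me form the powers $c^{\frac{n-2}{4}}$ and keeps denominators positive. Next I would invoke naturality: because $L_g=\Delta_g+\frac{n-2}{4(n-1)}R(g)$ is assembled from the metric in a diffeomorphism-covariant way (both the Laplace--Beltrami part and the scalar curvature transform naturally), for any function $w$ on $N$ one has $L_{F^*h}(w\circ F)=(L_hw)\circ F$. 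Applying this to $w=u_2$ and $w=v_2$ and using $F^*h=c\,g$ gives $L_{cg}(u_2\circ F)=0$ and $L_{cg}(v_2\circ F)=0$.

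Then I bring in conformal invariance, $L_{cg}\,(\cdot)=c^{-\frac{n+2}{4}}L_g\big(c^{\frac{n-2}{4}}\,\cdot\,\big)$. Since $c>0$, the equation $L_{cg}(u_2\circ F)=0$ is equivalent to $L_g\big(c^{\frac{n-2}{4}}(u_2\circ F)\big)=0$. So I set $u_1=c^{\frac{n-2}{4}}(u_2\circ F)$ and $v_1=c^{\frac{n-2}{4}}(v_2\circ F)$; these satisfy $L_gu_1=L_gv_1=0$, with $v_1>0$ because $c>0$ and $v_2>0$. The common factor cancels, so $F^*(u_2/v_2)=(u_2\circ F)/(v_2\circ F)=u_1/v_1$, which is the first claim. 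For the final assertion, conformal harmonic coordinates $Z=(Z^k)$ on $N$ have $Z^k=f^k/f$ with $L_hf^k=L_hf=0$ and $f>0$; by the above each $Z^k\circ F$ again has the quotient form solving the conformal Laplace equation for $g$, so the pulled-back functions satisfy the analytic requirement of Definition~\ref{def:Z_coords}. Since $F$ is a local diffeomorphism they also genuinely form a coordinate system, and when $\p M\neq\emptyset$ the hypothesis $F(\p M)\subset\p N$ guarantees that a boundary coordinate system pulls back to a boundary coordinate system.

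The main point requiring care, rather than any deep obstacle, is justifying the naturality identity $L_{F^*h}(w\circ F)=(L_hw)\circ F$ at the available regularity: $F$ is only $C^3$, while $u_2,v_2$ are $C^{r+1}$ by elliptic regularity, so I must confirm that the compositions, the factor $c$ determined by $F^*h=c\,g$, and the resulting equations are all regular enough for the conformal-invariance identity to be applied legitimately. This bookkeeping is the real work of the proof.
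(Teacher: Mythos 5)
Your proposal is correct and follows essentially the same route as the paper: the paper's one-line computation $L_g(c^{\frac{n-2}{4}}F^*u_2)=L_{\frac{1}{c}F^*h}(c^{\frac{n-2}{4}}F^*u_2)=c^{\frac{n+2}{4}}L_{F^*h}F^*u_2=c^{\frac{n+2}{4}}F^*(L_hu_2)=0$ is exactly your combination of conformal invariance and naturality, with the same choice $u_1=c^{\frac{n-2}{4}}F^*u_2$, $v_1=c^{\frac{n-2}{4}}F^*v_2$. Your added remarks on the positivity of $c$ and the boundary coordinate condition are correct and only make explicit what the paper leaves implicit.
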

\begin{proof}
 Let $u_2$ and $v_2>0$ satisfy $L_hu_2=L_hv_2=0$. We write 
 \[
  F^*\left(\frac{u_2}{v_2}\right)=\frac{F^*u_2}{F^*v_2}=\frac{c^{\frac{n-2}{4}}F^*u_2}{c^{\frac{n-2}{4}}F^*v_2}.
 \]
By applying the conformal Laplacian on $(M,g)$ to the numerator, we have by the conformal invariance~\eqref{conformal_invariance_of_L2} that 
\begin{align}\label{scaled_pull_back_calc}
 L_g(c^{\frac{n-2}{4}}F^*u_2)&=L_{\frac{1}{c}F^*h}(c^{\frac{n-2}{4}}F^*u_2)=c^{\frac{n+2}{4}}L_{F^*h}F^*u_2=c^{\frac{n+2}{4}}F^*(L_hu_2)=0.
 \end{align}
The assumption that $F$ is $C^{3}$ was used here to justify the calculations. Likewise we have that $L_g(c^{\frac{n-2}{4}}F^*v_2)=0$. 
Thus
 \[
 F^*\left(\frac{u_2}{v_2}\right)=\frac{u_1}{v_1}
 \]
 where $L_gu_1=L_gv_1=0$ and $v_1>0$.
 
 Let $(Z^1,\ldots,Z^n)$ be conformal harmonic coordinates on an open subset of $N$. By the definition of the conformal harmonic coordinates, and by what we have already proven, we have that $F^*Z^k=F^*(f^k/f)=w^k/w$, where $L_gw^k=L_gw=0$, $w>0$. This proves the latter claim of the lemma. 
\end{proof}

\begin{Theorem}\label{reg_of_conformal_maps}
Let $(M,g)$ and $(N,h)$ be $n$-dimensional, $n\geq 3$, Riemannian manifolds possibly with boundaries. Assume that the Riemannian metric tensors $g$ and $h$ are $C^{k+\alpha}$, $C^\infty$ or $C^\omega$ smooth, 
with $k\in \N$, $k\geq 2$ and $\alpha\in (0,1)$. 
%
Let $F: M \to N$ be $C^3$ smooth locally diffeomorphic conformal mapping,
\[
F^* h = c\,g \text{ in } M,
\]
where $c$ is a function in $M$. If $\p M\neq \emptyset$, we also assume $\p N\neq \emptyset$, that $g$ and $h$ are $C^{k+\alpha}$, $C^\infty$ or $C^\omega$ up to boundary, $F$ is $C^3$ up to boundary and that $F(\p M)\subset \p N$. Then $F$ and its local inverse are in $C^{k+1+\alpha}$, $C^\infty$ or $C^\omega$ respectively, up to boundary if $\p M \neq \emptyset$.


\end{Theorem}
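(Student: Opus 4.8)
The plan is to exploit Lemma~\ref{conf_preserves_Z}, which says that a $C^3$ conformal local diffeomorphism carries conformal harmonic coordinates to conformal harmonic coordinates. Fixing $p\in M$, I would first choose conformal harmonic coordinates $Z=(Z^1,\ldots,Z^n)$ on a neighborhood of $F(p)$ in $N$, which exist by Theorem~\ref{Z-coord} (or Theorem~\ref{Z-coord_boundary} if $F(p)\in\p N$) since $h\in C^{k+\alpha}$ with $k+\alpha>2$. Setting $W^k:=F^*Z^k=Z^k\circ F$, the map $W=(W^1,\ldots,W^n)$ is again a coordinate system near $p$ (composition of two local diffeomorphisms), and by Lemma~\ref{conf_preserves_Z} it is a conformal harmonic coordinate system on $M$. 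The decisive point is that in these two coordinate systems the representation of $F$ is the identity, $Z\circ F\circ W^{-1}=W\circ W^{-1}=\mathrm{id}$, equivalently $F=Z^{-1}\circ W$; hence the regularity of $F$ is governed not by its a priori $C^3$ regularity but by that of $W$ and $Z$.

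In the interior this closes the argument in one step, with no circularity: by part (2) of Theorem~\ref{Z-coord}, \emph{any} conformal harmonic coordinate system on $(M,g)$ with $g\in C^{k+\alpha}$ is automatically $C^{k+1+\alpha}$ (and $C^\infty$, resp.\ $C^\omega$, in those cases), because each coordinate function solves the divergence-form elliptic equation~\eqref{additional_regularity} with $C^{k+\alpha}$ coefficients; the same applies to $Z$ on $(N,h)$. Thus $W$ and $Z^{-1}$ are $C^{k+1+\alpha}$, so $F=Z^{-1}\circ W$ is $C^{k+1+\alpha}$ in $\Mi$. Since $dF$ is everywhere invertible, the inverse function theorem in the Hölder, smooth, or analytic category gives the same regularity for the local inverse.

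For the boundary case I would take $Z$ to be a \emph{boundary} conformal harmonic coordinate system near $F(p)\in\p N$. Because $F$ is a local diffeomorphism with $F(\p M)\subset\p N$, it sends a one-sided boundary neighborhood into a one-sided boundary neighborhood, so $W^n=Z^n\circ F\geq 0$ with $W^n|_{\p M}=0$; hence $W=Z\circ F$ is again a boundary coordinate system and qualifies as conformal harmonic coordinates near the boundary point $p$. The interior ellipticity with $C^{k+\alpha}$ coefficients is already in place, so the only missing ingredient for the up-to-boundary regularity of part (2) of Theorem~\ref{Z-coord_boundary} is that the Dirichlet data $W|_{\p M}=Z|_{\p N}\circ(F|_{\p M})$ be $C^{k+1+\alpha}$.

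This is where the main obstacle lies: the boundary values of $W$ a priori inherit only the $C^3$ regularity of $F$ through $\phi:=F|_{\p M}$, so $\phi$ must be upgraded independently. The key observation is that $\phi$ is itself a conformal local diffeomorphism between the $(n-1)$-dimensional boundary manifolds $(\p M,g|_{\p M})$ and $(\p N,h|_{\p N})$: restricting $F^*h=cg$ to $T\p M$ gives $\phi^*(h|_{\p N})=(c|_{\p M})\,g|_{\p M}$, and the induced boundary metrics are $C^{k+\alpha}$ since $\p M,\p N$ are $C^{k+1}$. If $n\geq 4$, so that $\dim\p M=n-1\geq 3$, I would apply the interior case of this theorem in dimension $n-1$ to conclude $\phi\in C^{k+1+\alpha}$; in the base case $n=3$, the map $\phi$ is a conformal map of surfaces, hence (anti)holomorphic in isothermal coordinates, which are $C^{k+1+\alpha}$ (resp.\ $C^\infty$, $C^\omega$), so again $\phi\in C^{k+1+\alpha}$. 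With $\phi$ of this regularity, $W|_{\p M}=Z|_{\p N}\circ\phi$ is $C^{k+1+\alpha}$, Theorem~\ref{Z-coord_boundary}(2) yields $W,Z\in C^{k+1+\alpha}$ up to the boundary, and $F=Z^{-1}\circ W$ together with the inverse function theorem finishes the up-to-boundary statement for $F$ and its local inverse.
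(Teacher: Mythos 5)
Your proof is correct and follows the paper's argument essentially verbatim: pull back conformal harmonic coordinates $Z$ near $F(p)$ to get conformal harmonic coordinates $W=F^*Z$ near $p$ (Lemma~\ref{conf_preserves_Z}), write $F=Z^{-1}\circ W$, invoke the automatic $C^{k+1+\alpha}$/$C^\infty$/$C^\omega$ regularity of conformal harmonic coordinates from Theorems~\ref{Z-coord} and~\ref{Z-coord_boundary}, and reduce the up-to-boundary case to the interior case applied to the restriction $F|_{\p M}$. You are in fact more careful than the paper at one point: the paper applies ``the first part of this theorem'' to $\overline{F}:(\p M,g|_{\p M})\to(\p N,h|_{\p N})$, a conformal map of $(n-1)$-dimensional manifolds, which only makes sense when $n-1\geq 3$; your separate treatment of the base case $n=3$ via isothermal coordinates and (anti)holomorphicity correctly fills this gap, which the paper leaves implicit.
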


\begin{proof}
{\bf Interior points:}
 Let $p \in \Mi$ and let $Z=(Z^1,\ldots,Z^n)$ be conformal harmonic coordinates on a neighborhood $\Omega$ of $F(p)$ in $(N,h)$. 
 The mapping $Z:\Omega\to \R^n$ is a local $C^{r+1}$ diffeomorphism by Theorem \ref{Z-coord}. Define $W = F^*Z$. Then $W$ are also conformal harmonic coordinates on a neighborhood of $p$ by Lemma~\ref{conf_preserves_Z}. Thus $W$ and its local inverse are $C^{r+1}$ regular by Theorem \ref{Z-coord}. 
 Since $Z$ is an invertible mapping, we can express
 \begin{equation}\label{representation_for_conf}
 F=Z^{-1}\circ W.
 \end{equation}
 This mapping is $C^{r+1}$ regular as a composition of $C^{r+1}$ regular mappings. Since $W$ is locally invertible, it follows from~\eqref{representation_for_conf} that the local inverse of $F$ is also $C^{r+1}$.  
 
{\bf Boundary points:}
Let $p\in \p M$. Thus by assumption $F(p)\in \p N$. By Theorem~\ref{Z-coord_boundary}, on a neighborhood $\Omega$ of $F(p)$ in $(N,h)$ exist conformal harmonic coordinates $Z=(Z^1,\ldots,Z^n)$ which are $C^{r+1}$ regular up to the boundary. 
By Lemma~\ref{conf_preserves_Z} we have that $W=F^*Z$ are conformal harmonic coordinates on a neighborhood of $p$. 
Thus we may express $F$ in the form~\eqref{representation_for_conf}, which proves the claimed regularity in the interior. 

Next we show that $F$ restricted to the boundary is $C^{r+1}$ and then apply Theorem~\ref{Z-coord_boundary} part $(2)$ to show that $W$ is $C^{r+1}$ up to the boundary. To do that, we define 
\begin{align*}
 \overline{F}:(&\p M, g|_{\p M})\to  (\p N,h|_{\p N}), \\
 \overline{F}(q)&=F(q), \quad q\in \p M.
\end{align*}
As $F$ is conformal, we see that $\overline{F}$ is a conformal mapping between Riemannian manifolds without boundary that have $C^{r}$ regular Riemannian metrics. From the first part of this theorem, we have that $\overline{F}$ is $C^{r+1}$. Thus 
\[
 W|_{\p M\cap \Omega}=Z\circ F|_{\p M\cap \Omega}=Z\circ \overline{F} \in C^{r+1}(\p M\cap \Omega).
\]
Thus the restriction of $W$ to the boundary is $C^{r+1}$ and it follows from Theorem~\ref{Z-coord_boundary} part $(2)$ that $W$ are conformal harmonic coordinates, which have $C^{r+1}$ regularity up to boundary.  Finally we have that $F=Z^{-1}\circ W$ is a local diffeomorphism, which is $C^{r+1}$ regular up the boundary. 

The claims concerning $C^\infty$ or $C^\omega$ metrics follow similarly for both the interior and boundary points. 
\color{black}
\end{proof}
We remark that if $F$ in the theorem above is a homeomorphism, then the assumption $F(\p M)\subset \p N$ can be dropped. A homeomorphism maps boundary to boundary, but for a local homeomorphism this is not true (consider $z\mapsto z^2$ on a horseshoe shaped domain in $\mathbb{C}$).

We prove next a unique continuation result for conformal mappings, which has also been studied in~\cite{Pa95}. We say that two mappings $F_1$ and $F_2$ between $n$-dimensional manifolds $M$ and $N$ agree to second order on a set $\Xi \subset M$ if $F_1|_\Xi=F_2|_\Xi$ and if for each $p\in \Xi$ there are coordinates $(x^1,\ldots,x^n)$ and $(y^1,\ldots,y^n)$ on neighborhoods of the point $p$ and the point $F_1(p)=F_2(p)\in N$ respectively such that for $x\in \Xi$ holds
\[
 \p_{x_a}F_1^l(x)=\p_{x_a}F_2^l(x) \text{ and } \p_{x_a}\p_{x_b}F_1^l(x)=\p_{x_a}\p_{x_b}F_2^l(x)
\]
for all $a,b=1,\ldots,n$ and $l=1,\ldots,n$.


\begin{Theorem}\label{ucp_for_conformal}
 Let $(M,g)$ and $(N,h)$ be connected $n$-dimensional Riemannian manifolds, $n\geq 3$. Assume that $F_1$ and $F_2$ are two $C^3$ locally diffeomorphic conformal mappings $M \to N$, 
 $F_j^*h=c_jg$, $j=1,2$, where $c_j$ are functions in $M$. If $M$ has a boundary, we also assume that $F_1$ and $F_2$ are $C^3$ up to $\p M$ and that $F_1(\p M)\subset \p N$.
 
 Assume either that $F_1=F_2$ on an open subset of $M$ or that $F_1$ and $F_2$ agree to second order on an open subset $\Gamma\subset \p M$, if $\p M \neq \emptyset$. Then 
 \[
  F_1=F_2 \text{ on } M.
 \]
 
\end{Theorem}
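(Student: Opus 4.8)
The plan is to reduce everything to the engine that conformal mappings pull back conformal harmonic coordinates to conformal harmonic coordinates (Lemma~\ref{conf_preserves_Z}), together with the fact that the functions out of which such coordinates are built solve the \emph{linear} elliptic equation $L_g u=0$, for which unique continuation is available. Concretely, fix a point $p_0$ where the two maps agree, set $q_0=F_1(p_0)=F_2(p_0)$, and choose conformal harmonic coordinates $Z=(Z^1,\dots,Z^n)$, $Z^k=f^k/f$ with $L_hf^k=L_hf=0$, on a neighborhood of $q_0$ in $N$ (using Theorem~\ref{Z-coord} in the interior and Theorem~\ref{Z-coord_boundary} near the boundary). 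For $j=1,2$ set $u_j^k=c_j^{\frac{n-2}{4}}F_j^*f^k$ and $v_j=c_j^{\frac{n-2}{4}}F_j^*f>0$. By the computation \eqref{scaled_pull_back_calc} in the proof of Lemma~\ref{conf_preserves_Z}, each of these solves $L_g(\cdot)=0$ near $p_0$, and $F_j^*Z^k=u_j^k/v_j$, so that $F_j=Z^{-1}\circ(u_j^1/v_j,\dots,u_j^n/v_j)$ as in \eqref{representation_for_conf}. The key point is that the \emph{same} coordinates $Z$ serve both maps, precisely because $F_1$ and $F_2$ send $p_0$ to the common point $q_0$.

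\textbf{Interior case.} Consider $A=\{\,p\in M: F_1=F_2 \text{ on a neighborhood of }p\,\}$, which is open and, by hypothesis, nonempty. I would show $A$ is closed and invoke connectedness. If $p\in\overline A$ then $F_1(p)=F_2(p)=:q$ by continuity; choose conformal harmonic coordinates $Z$ near $q$ and a connected neighborhood $V\ni p$ with $F_1(V),F_2(V)$ inside the domain of $Z$, and form $u_j^k,v_j$ on $V$. On the nonempty open set $V\cap A$ one has $F_1=F_2$, hence $F_1^*h=F_2^*h$, hence $c_1=c_2$, and therefore $u_1^k=u_2^k$ and $v_1=v_2$ there. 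Since $u_1^k-u_2^k$ and $v_1-v_2$ solve $L_g(\cdot)=0$ on the connected set $V$ and vanish on an open subset, weak unique continuation for the second order elliptic operator $L_g$ forces them to vanish on all of $V$; thus $F_1=F_2$ on $V$ and $p\in A$. Hence $A=M$.

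\textbf{Boundary case.} Here I would only show that agreement to second order on $\Gamma\subset\p M$ forces agreement on an open subset of $\Mi$, and then quote the interior case. Pick $p_0\in\Gamma$, $q_0=F_1(p_0)=F_2(p_0)\in\p N$, and boundary conformal harmonic coordinates $Z$ near $q_0$ (Theorem~\ref{Z-coord_boundary}), and form $u_j^k,v_j$ on a one-sided neighborhood of $p_0$. Agreement to second order means $F_1=F_2$, $\p F_1=\p F_2$ and $\p^2F_1=\p^2F_2$ on $\Gamma$. Since the conformal factor $c_j=\tfrac1n g^{ab}h_{lm}(F_j)\,\p_aF_j^l\,\p_bF_j^m$ depends on $F_j$ and $\p F_j$, its first derivatives depend in addition on $\p^2F_j$; hence $c_1=c_2$ and $\p c_1=\p c_2$ on $\Gamma$. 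A chain rule bookkeeping then gives $u_1^k=u_2^k$ and $\p u_1^k=\p u_2^k$ on $\Gamma$, and likewise for $v_j$, i.e. the $L_g$-solutions $w^k:=u_1^k-u_2^k$ and $v_1-v_2$ have vanishing Cauchy data on the open boundary portion $\Gamma$. Unique continuation from Cauchy data on $\Gamma$ for $L_g$ (for instance by extending the metric $C^{k+\alpha}$ across $\Gamma$, extending $w^k$ by zero to obtain a weak solution on a two-sided neighborhood, and applying interior unique continuation) yields $w^k\equiv0$ and $v_1-v_2\equiv0$ on a one-sided neighborhood of $p_0$, whence $F_1=F_2$ there; in particular they agree on a nonempty open subset of $\Mi$, and the interior case applies.

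The regularity needed to run all of this is present: with $g\in C^{k+\alpha}$, $k\ge2$, the operator $L_g=\Delta_g+\tfrac{n-2}{4(n-1)}R(g)$ has Lipschitz principal coefficients $g^{ab}$ and bounded lower order terms, so both weak unique continuation and unique continuation from Cauchy data hold, while $F_j\in C^3$ is exactly what makes $u_j^k,v_j$ legitimate solutions of $L_g(\cdot)=0$ through \eqref{scaled_pull_back_calc}. I expect the boundary part to be the main obstacle: one must verify carefully that ``agree to second order'' is precisely the statement that the Cauchy data of the auxiliary solutions $u_j^k$ and $v_j$ coincide on $\Gamma$ (the appearance of $\p^2F_j$ through $\p c_j$ is what makes the second order hypothesis sharp), and then invoke unique continuation up to the boundary rather than merely in the interior.
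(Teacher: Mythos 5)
Your proposal is correct and follows essentially the same route as the paper: write $F_j^*Z^k$ as a quotient of the $L_g$-solutions $c_j^{(n-2)/4}F_j^*f^k$ and $c_j^{(n-2)/4}F_j^*f$ for a single system of conformal harmonic coordinates $Z$ centered at the common image point, apply elliptic unique continuation (weak UCP in the interior, UCP from coinciding Cauchy data on $\Gamma$ in the boundary case, where the second-order agreement is exactly what makes $c_1,\nabla c_1$ and $c_2,\nabla c_2$ match), and conclude by an open-closed argument on the connected manifold. The only cosmetic difference is that the paper works with the boundary of the largest open agreement set while you show the set of local-agreement points is closed, and you spell out the boundary unique continuation step in slightly more detail; neither changes the substance.
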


\begin{proof}
Let us first assume that $F_1=F_2$ on an open subset of $M$. Let $B$ be the largest open subset of $M$ where $F_1=F_2$. We have that $B$ is nonempty and that
 \[
  c_1=g^{-1}F_1^*h=g^{-1}F_2^*h=c_2 \text{ on } B.
 \]
 Let $p\in \p B$. Since $F_1$ and $F_2$ are continuous, we have that $q:=F_1(p)=F_2(p)$.
Let $Z=(Z^1,\ldots,Z^n)=(f^1/f,\ldots,f^n/f)$ be conformal harmonic coordinates on a neighborhood $\Omega$ of $q$. We have for $k=1,\ldots, n$ and $j=1,2$ that  
\[
 F_j^*\left(\frac{f^k}{f}\right)=\frac{F_j^*f^k}{F_j^*f}=\frac{c_j^{(n-2)/4}F_j^*f^k}{c_j^{(n-2)/4}F_j^*f}.
\]
A calculation similar to the one in~\eqref{scaled_pull_back_calc} shows that 
\[
 L_g(c_j^{(n-2)/4}F_j^*f^k)=L_g(c_j^{(n-2)/4}F_j^*f)=0.
\]
We have on the nonempty open set $F_1^{-1}(\Omega)\cap F_2^{-1}(\Omega)\cap B$ that $c_1^{(n-2)/4}F_1^*f^k=c_2^{(n-2)/4}F_2^*f^k$. By the unique continuation principle of solutions to elliptic partial differential equations we have 
\[
 c_1^{(n-2)/4}F_1^*f^k=c_2^{(n-2)/4}F_2^*f^k \text{ and } c_1^{(n-2)/4}F_1^*f=c_2^{(n-2)/4}F_2^*f 
\]
on the open set $F_1^{-1}(\Omega)\cap F_2^{-1}(\Omega)$.
It follows that
\[
 F_1^*\left(\frac{f^k}{f}\right)=\frac{c_1^{(n-2)/4}F_1^*f^k}{c_1^{(n-2)/4}F_1^*f}=\frac{c_2^{(n-2)/4}F_2^*f^k}{c_2^{(n-2)/4}F_2^*f}=F_2^*\left(\frac{f^k}{f}\right).
\]
By denoting $W^k=F_1^*\left(\frac{f^k}{f}\right)=F_2^*\left(\frac{f^k}{f}\right)$ and $W=(W^1,\ldots,W^k)$ we see that 
\begin{equation}\label{F1_is_F2}
 F_1=Z^{-1}\circ W=F_2
\end{equation}
holds on the open neighborhood $F_1^{-1}(\Omega)\cap F_2^{-1}(\Omega)$ of $p$. It follows that $p\in B$. Consequently $B$ is nonempty, open and closed set. Thus $B=M$ as $M$ is connected. 

Let us then assume that $F_1$ and $F_2$ agree to second order on $\Gamma\subset \p M$. Let $p\in \Gamma$ and let $Z$ be conformal harmonic coordinates on a neighborhood of the boundary point $q:=F_1(p)=F_2(p)$. As in the argument above, we have
\[
 L_g(c_j^{(n-2)/4}F_j^*f^k)=L_g(c_j^{(n-2)/4}F_j^*f)=0,
\]
on an open subset of $M$, $j=1,2$ and $k=1,\ldots,n$. Since $F_1$ and $F_2$ agree to second order on the boundary, we have that 
\[
 c_1|_{\Gamma}=c_2|_{\Gamma} \text{ and } \nabla c_1|_{\Gamma}=\nabla c_2|_{\Gamma},
\]
since $c_j=g^{-1}F_j^*h$, $j=1,2$. Therefore, the Cauchy data of $c_1^{(n-2)/4}F_1^*f^k$ and $c_1^{(n-2)/4}F_1^*f$ on $\Gamma$ agree with those of $c_2^{(n-2)/4}F_2^*f^k$ and $c_2^{(n-2)/4}F_2^*f$ respectively. 
Thus, by elliptic unique continuation we have $c_1^{(n-2)/4}F_1^*f^k=c_2^{(n-2)/4}F_2^*f^k$ and $c_1^{(n-2)/4}F_1^*f=c_2^{(n-2)/4}F_2^*f^k$. It follows from the argument above first that $F_1=F_2$ on an open subset, and consequently that $F_1=F_2$ on $M$. \qedhere

\end{proof}

\subsection{Conformal curvature tensors in conformal harmonic coordinates}\label{Bach_formulas}
We show that common conformal curvature tensors in conformal geometry become elliptic operators for the determinant normalized metric in conformal harmonic coordinates. Let us recall definitions of the conformal curvature tensors. 
We write $R_{abcd}$, $R_{ab}$, and $R$ for the Riemann curvature tensor, Ricci tensor, and scalar curvature, respectively. The Schouten tensor is defined as 
\[
P_{ab} = \frac{1}{n-2} \left( R_{ab} - \frac{R}{2(n-1)} g_{ab} \right).
\]
The Weyl tensor of $(M,g)$ is the $4$-tensor 
\[
W_{abcd} = R_{abcd} + P_{ac} g_{bd} - P_{bc} g_{ad} + P_{bd} g_{ac} - P_{ad} g_{bc},
\]
the Cotton tensor is the $3$-tensor 
\begin{equation}\label{Cotton_tensor}
C_{abc} = \nabla_a P_{bc} - \nabla_b P_{ac},
\end{equation}
and, if $n\geq 4$, the Bach tensor is the $2$-tensor
\begin{equation}\label{Bach_tensor}
B_{ab}=\nabla^c\nabla_aP_{bc}-\nabla^c\nabla_cP_{ab}+P^{cd}W_{acdb}.
\end{equation}
The Bach tensor can also be written as
\begin{equation}\label{Bach_via_Weyl}
 B_{ab}=\nabla^c\nabla^dW_{acbd} +\frac{1}{2}R^{cd}W_{acbd}.
\end{equation}
We also consider the Fefferman-Graham obstruction tensors $\mathcal{O}=\mathcal{O}_{(n)}$ having the property
\begin{equation}\label{Obstruction_tensor}
\mathcal{O}_{ab}=\frac{1}{n-3}\Delta^{n/2-2}B_{ab}+\text{lower order terms,}
\end{equation}
where $\Delta=\nabla^a\nabla_a$ and $n \geq 4$ is even. 
If $n=4$, then $\mathcal{O}_{ab} = B_{ab}$. These tensors have the following behavior under conformal scaling in various dimensions: $W(cg) = c W(g)$ for $n \geq 4$, $C(cg) = C(g)$ for $n=3$. The Bach~\eqref{Bach_tensor} and obstruction~\eqref{Obstruction_tensor} tensors are defined in any dimension $n\geq 4$. If the dimension $n$ of the manifold is even and $\geq 4$, then the obstruction tensor $\mathcal{O}_{(n)}$ satisfies
\[
 \mathcal{O}(cg) = c^{-\frac{n-2}{2}} \mathcal{O}(g).
\]
We refer to~\cite{AcheViaclovsky,Bach,Besse,Der,FG,FGbook,Tian} for additional information on the conformal curvature tensors above. 

In~\cite{LS2} it was explained how the above conformal curvature tensors can be regarded as distributions. 
Especially, if in a coordinate system the components of the metric tensor $g_{jk}$ satisfy $g_{jk}\in C^1$, $g_{jk}\in C^2$ or $g_{jk}\in C^{n-1}$, then $W_{abcd}$, $C_{abc}$ and $\mathcal{O}_{ab}$ can be regarded as distributions respectively. We refer to~\cite{LS2} for the details. 

We begin with a proposition which says that in conformal harmonic coordinates the scalar curvature $R(\hat{g})$ of the determinant normalized metric
\[
\hat{g}=\frac{g}{\abs{g}^{1/n}}
\]
does not contain second order derivatives of the metric $\hat{g}$. That is, the highest order part of the scalar curvature of $\hat{g}$ vanishes in conformal harmonic coordinates.
We find this a little surprising and we comment the result after the proof of the proposition. If $g$ is a metric tensor, we use the notation
\begin{equation}\label{Tk_definition}
 T_k(g)
\end{equation}
to denote an unspecified polynomial of the components of $g$, components of the inverse of $g$ and derivatives of $g$ up to order $k\in \mathbb{N}$. That is $T_k(g)=P(g_{ab}, g^{ab}, \nabla g_{ab}, \ldots, \nabla^k g_{ab})$, where $P$ is a polynomial. 

\begin{Proposition}\label{prop_scalar_zero}
 In conformal harmonic coordinates the scalar curvature of the determinant normalized metric $\hat{g}$ does not contain second order derivatives of $\hat{g}$:
 \begin{equation}\label{eq_scalar_zero}
R(\hat{g})=T_1(\hat{g}).
 \end{equation}
\end{Proposition}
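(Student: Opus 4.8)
The plan is to collapse the whole statement to a single scalar identity for $R(\hat g)$ and then solve that identity for $R(\hat g)$. First I would record the two structural facts that make the determinant normalized metric special in conformal harmonic coordinates. Since $\hat g=\abs{g}^{-1/n}g$, its determinant is $\abs{\hat g}=1$. And because conformal harmonic coordinates are conformally invariant (Proposition~\ref{Z_coords_conformally_invariant}), the given coordinates $Z$ are conformal harmonic also for $\hat g$; hence by Proposition~\ref{gauge_cond_of_Z} applied to $\hat g$ there is a positive function $\hat f$ with $L_{\hat g}\hat f=0$ and
\[
\Gamma_a(\hat g)=2\p_a\log\hat f,\qquad a=1,\ldots,n.
\]

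Next I would isolate the second-order content of $R(\hat g)$. In any coordinates the second-derivative terms of $R_{ab}$ arise from $\p_c\Gamma^c_{ab}-\p_b\Gamma^c_{ac}$, and $\Gamma^c_{ac}=\tfrac12\p_a\log\abs{g}$. For $\hat g$ this trace term vanishes identically because $\abs{\hat g}=1$, so contracting with $\hat g^{ab}$ and commuting the inverse metric past the derivative (which only creates products of first derivatives) gives
\[
R(\hat g)=\p_c\Gamma^c(\hat g)+T_1(\hat g),
\]
where $\Gamma^c(\hat g)=\hat g^{cd}\Gamma_d(\hat g)$. The point is that $\Gamma^c(\hat g)$ is itself a first-order polynomial in $\hat g$, so \emph{all} the second derivatives of $\hat g$ in $R(\hat g)$ are concentrated in the single term $\p_c\Gamma^c(\hat g)$.

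Then I would feed in the gauge condition together with $L_{\hat g}\hat f=0$. Substituting $\Gamma^c(\hat g)=2\hat g^{cd}\p_d\log\hat f$ and using $\abs{\hat g}=1$, which makes $\Delta_{\hat g}u=-\p_c(\hat g^{cd}\p_d u)$, converts the leading term into a Laplacian, $\p_c\Gamma^c(\hat g)=-2\Delta_{\hat g}\log\hat f$. The chain-rule identity $\Delta_{\hat g}\log\hat f=\Delta_{\hat g}\hat f/\hat f+\hat g^{cd}\p_c\log\hat f\,\p_d\log\hat f$ then splits this into two pieces: by $L_{\hat g}\hat f=0$ the first is $\Delta_{\hat g}\hat f/\hat f=-\tfrac{n-2}{4(n-1)}R(\hat g)$, while the second equals $\tfrac14\hat g^{cd}\Gamma_c(\hat g)\Gamma_d(\hat g)$ by the gauge condition and is therefore $T_1(\hat g)$. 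Collecting everything yields the scalar relation
\[
R(\hat g)=\frac{n-2}{2(n-1)}R(\hat g)+T_1(\hat g).
\]

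Finally, since $1-\tfrac{n-2}{2(n-1)}=\tfrac{n}{2(n-1)}\neq 0$ for $n\geq 3$, I would solve this for $R(\hat g)$ to conclude $R(\hat g)=T_1(\hat g)$, as claimed. The main obstacle is purely a matter of order-bookkeeping: one must make sure that every discarded term carries at most first derivatives of $\hat g$, the only second derivatives entering through $\p_c\Gamma^c(\hat g)$, which the conformal harmonic gauge and the scalar equation for $\hat f$ convert into the $R(\hat g)$ self-coupling plus first-order terms. A reassuring feature is that the argument is insensitive to curvature sign conventions: an opposite overall sign in the first step merely changes the solvable coefficient to $1+\tfrac{n-2}{2(n-1)}$, which is again nonzero, so the conclusion is unaffected.
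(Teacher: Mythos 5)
Your proposal is correct and follows essentially the same route as the paper's proof: reduce $R(\hat g)$ to $\p_c\Gamma^c(\hat g)+T_1(\hat g)$, insert the gauge condition $\Gamma_a(\hat g)=2\p_a\log f_{\hat g}$ with $L_{\hat g}f_{\hat g}=0$, use the identity $\Delta\log f=\Delta f/f+\abs{d\log f}^2$ to produce the self-coupling $\tfrac{n-2}{2(n-1)}R(\hat g)$, and solve the resulting scalar relation. The only cosmetic difference is that you exploit $\abs{\hat g}=1$ explicitly to pass from $\p_c\Gamma^c(\hat g)$ to $-2\Delta_{\hat g}\log f_{\hat g}$, where the paper instead uses $\hat g^{ab}\p_a\p_b=-\Delta_{\hat g}+\Gamma^a(\hat g)\p_a$; these agree modulo $T_1(\hat g)$.
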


\begin{proof}
Let us first recall some identities that hold in general coordinates $(x^a)$ for a general Riemannian metric $g$. We have the general formula for the Laplacian of a logarithm:
\begin{equation}\label{delta_of_log}
\Delta_g\log f=\frac 1f \Delta_g f + \abs{d\log f}_g^2.
\end{equation}
Here $g$ is any Riemannian metric and $f$ any positive $C^2$ function. We have the coordinate formula~\eqref{coordinate_formula_for_Ricci} for the Ricci tensor
\begin{align*}
R_{ab}(g) &=\partial_c \Gamma_{ab}^c - \partial_a \Gamma_{cb}^c + \Gamma_{ab}^c \Gamma_{dc}^d - \Gamma_{cb}^d \Gamma_{ad}^c \\
&=\partial_c \Gamma_{ab}^c - \Gamma_{cb}^d \Gamma_{ad}^c - \frac{1}{2} \partial_{a}\p_b(\log\abs{g}) + \frac{1}{2} \Gamma_{ab}^c \partial_c(\log\abs{g}),
\end{align*}
which holds in any system of local coordinates. Here $\Gamma_{ab}^c=\Gamma_{ab}^c(g)$. Consequently, for the scalar curvature of the determinant normalized metric $\hat{g}$ we have
 \begin{align}\label{scalar_curvature_of_normalized}
 R(\hat{g})&=\hat{g}^{ab}R_{ab}(\hat{g})
 =\hat{g}^{ab}\big(\partial_c \Gamma_{ab}^c(\hat{g})-\frac{1}{2}\partial_a\p_b\log{\abs{\hat{g}}}\big)+T_1(\hat{g}) \nonumber\\
 &=\p_c(\hat{g}^{ab}\Gamma_{ab}^c(\hat{g}))-(\p_c\hat{g}^{ab})\Gamma_{ab}^c(\hat{g})+T_1(\hat{g})=\p_c\Gamma^c(\hat{g}) + T_1(\hat{g})\nonumber \\
 &=\p_c(\hat{g}^{cd}\Gamma_d(\hat{g}))+T_1(\hat{g})=\hat{g}^{cd}\p_c\Gamma_d(\hat{g})+T_1(\hat{g}).
 \end{align}
 Here and in the rest of the proof indices are lowered, raised and contracted by using the metric $\hat{g}$.
 
 Let us then calculate in conformal harmonic coordinates. Proposition~\ref{Z_cond_conf_inv} applied for $c=\abs{g}^{-1/n}$ shows that $\hat{g}$ satisfies the coordinate condition
 \begin{equation}\label{gamma_for_g_hat}
  \Gamma_a(\hat{g})=2\p_a\log f_{\hat{g}},
 \end{equation}
where $f_{\hat{g}}$ satisfies
\[
 L_{\hat{g}}f_{\hat{g}}=\Delta_{\hat{g}}f_{\hat{g}}+\frac{n-2}{4(n-1)} R(\hat{g})f_{\hat{g}}=0. 
\]
The Laplace-Beltrami operator satisfies
\begin{equation}\label{delta_for_normalized}
\hat{g}^{ab}\p_a\p_b=-\Delta_{\hat{g}}+\Gamma^a(\hat{g})\p_a.
\end{equation}
                                                                                                                                                                           By using~\eqref{delta_of_log},~\eqref{gamma_for_g_hat} and~\eqref{delta_for_normalized}, we have that
\begin{align}\label{mystique_equation}
 \hat{g}^{ab}\p_a\Gamma_b(\hat{g})&=2\hat{g}^{ab}\p_a\p_b\log f_{\hat{g}}=-2\Delta_{\hat{g}}\log f_{\hat{g}}+ 2\Gamma^a(\hat{g})\p_a\log f_{\hat{g}} \nonumber\\
 &= -2 \frac{1}{f_{\hat{g}}}\Delta_{\hat{g}}f_{\hat{g}} -2 \abs{d\log f_{\hat{g}}}^2_{\hat{g}}+ 2\Gamma^a(\hat{g})\p_a\log f_{\hat{g}}\nonumber\\
 &= -2 \frac{1}{f_{\hat{g}}}\Delta_{\hat{g}}f_{\hat{g}}-\frac 12\Gamma_a(\hat{g})\hat{g}^{ab} \Gamma_a(\hat{g}) + \Gamma^a(\hat{g})\Gamma_a(\hat{g})\nonumber\\
 &= 2\frac{n-2}{4(n-1)}R(\hat{g})+ \frac 12\Gamma^a(\hat{g})\Gamma_a(\hat{g}).
 \end{align}
 Combining~\eqref{mystique_equation} with~\eqref{scalar_curvature_of_normalized} shows that
 \[
 \frac{n}{2(n-1)}R(\hat{g}) + T_1(\hat{g})=0.
 \]
 The claim follows.
\end{proof}
\color{black}
We comment the result of Proposition~\ref{prop_scalar_zero}. In defining conformal harmonic coordinates we find a positive solution $f$ to the equation $L_gf=0$, which can be seen as solving a local Yamabe problem for zero scalar curvature. Especially, by the equation~\eqref{scalar_curvature_scaled2}, the conformal metric 
\[
 \tilde{g}=f^{p-2}g, \quad p=\frac{2n}{n-2},
\]
has zero scalar curvature, $R(f^{p-2}g)=0$, in any coordinate system. 

However, in conformal harmonic coordinates, which are constructed with respect to the metric $g$, the determinant normalized metric $\hat{g}$ is not generically the same as the metric $f^{p-2}g$. An example is given by letting $(M,g)$ to be a Riemannian manifold whose scalar curvature vanishes. Then any harmonic coordinates are conformal harmonic coordinates by Proposition~\ref{equivalence}, where the function $f\equiv 1$ in the definition of conformal harmonic coordinates. Let $U$ be harmonic coordinates such that $\det{g}$ is not identically one in these coordinates. (For the existence, see~\cite{DK}, or apply Lemma~\ref{prescribed_sols} by noticing that now $L_g=\Delta_g$.)
%
Then we have
\[
 \hat{g}\neq g=f^{p-2}g
\]
in the conformal harmonic coordinates $U$, where $R(f^{p-2}g)=0$.   
Nevertheless we still have $R(\hat{g})=T_1(\hat{g})$. We do not find the interpretation of Proposition~\ref{prop_scalar_zero} obvious because of this reasoning.



We also point out that the equation~\eqref{eq_scalar_zero} is not a tensorial equation for $g$. This is because $\hat{g}$ is not tensorial in $g$ in the sense that
\[
 T^*\hat{g}=J^{2/n}\,\widehat{T^*g}
\]
for a general coordinate transformation $T$. (The reason for the transformation rule is that the determinant of $g$ is a half density, see e.g.~\cite{Du96}.) Here $J$ is the Jacobian determinant of $T$ and $\widehat{T^*g}=T^*g/(\abs{T^*g}^{1/n})$. 


\subsubsection{Results for Bach and obstruction tensors}\label{sec:Bach_and_obs_form}
We consider next the ellipticity properties of the Bach and the obstruction tensors in conformal harmonic coordinates. These tensors have invariances under both coordinate transformations and conformal scalings. Let us first discuss what do we mean when we say that a conformal curvature tensor can be regarded as an elliptic operator.

Consider the Bach tensor $B(\hat{g})$ for the determinant normalized metric $\hat{g}$ in local coordinates. The expression for $B(\hat{g})$ is achieved by substituting $\hat{g}$ into the formula~\eqref{Bach_tensor} of the Bach tensor.
The expression of $B(\hat{g})$ can be thought as a quasilinear partial differential operator applied to $\hat{g}$:
\begin{equation}\label{quasilinear_form}
B(\hat{g})=\sum_{|\alpha|=4}A^\alpha(\hat{g},\hat{g}^{-1},\nabla \hat{g}, \ldots, \nabla^{3}\hat{g})\p_\alpha \hat{g} + T_{3}(\hat{g}).
\end{equation}
The coefficients $A^\alpha$ are polynomials of their arguments and $\alpha=(\alpha_1,\ldots,\alpha_n)$ is a multi-index. The expression $T_{3}(\hat{g})$ was defined in~\eqref{Tk_definition}.

We apply the coordinate condition~\eqref{scaled_Cs} of conformal harmonic coordinates for the conformal metric $\hat{g}$ to have the formula
\begin{equation}\label{cc}
\Gamma_a(\hat{g})=2\p_a\log f_{\hat{g}}
\end{equation}
We substitute~\eqref{cc} to the part of the expression~\eqref{quasilinear_form} of $B(\hat{g})$ that contains fourth order derivatives of $\hat{g}$. 
The function $f=f_{\hat{g}}$ in~\eqref{cc}
depends \emph{non-locally and implicitly} on $\hat{g}$ via the equation
\begin{equation}\label{non-local_f}
L_{\hat{g}}f_{\hat{g}}=0,
\end{equation}
but in general we do not know much about the exact form of the dependence. 

In the coordinate expression of the Bach tensor, the contracted Christoffel symbols $\Gamma_a$ in fact appear only under the operation by the Laplace operator $\Delta$ (or in terms of lower order derivatives of the metric). Now, in conformal harmonic coordinates, a contracted Christoffel symbol satisfies $\Gamma_a=2\p_a\log f$, and the Laplacian of $\log f$ satisfies $\Delta \log f=-\frac{n-2}{4(n-1)}R\ +$ a term containing lower order derivatives of the metric. For the latter see ~\eqref{delta_of_log} and~\eqref{non-local_f}. Note that now $\Delta \log f$ is a partial differential operator acting on $g$. By these remarks, $B(\hat{g})$ can be considered as a quasilinear operator acting on $\hat{g}$ even after substituting the non-local coordinate condition of conformal harmonic coordinates to the part of $B(\hat{g})$ containing derivatives of order $4$.

For the Weyl curvature tensor the situation is different. Imposing the coordinate condition of conformal harmonic coordinates for the Weyl tensor leads to a situation where one needs to deal directly with the complicated dependence of $f$ on $g$. This is because the contracted Christoffel symbols do not appear under the operation by the Laplacian in the formula of the Weyl tensor. However, the Weyl tensor can still be considered as a quasilinear overdetermined elliptic operator in conformal harmonic coordinates when acting on the determinant normalized metric. We will get back to this matter before Theorem~\ref{regularity_thm_interior}.

Let us move on to the calculation of the principal symbols of the Bach and obstruction tensors. These tensors were defined at the beginning of Section~\ref{Bach_formulas}, where it was also explained how these tensors are regarded as distributions for non-smooth Riemannian metrics. The principal symbol of a quasilinear operator is defined as the principal symbol (see e.g.~\cite{T1}) of the linearization of the operator. For example, the notation $\sigma(B_{ab}(\hat{g}))$ means the principal symbol of the operator achieved by  linearizing $B_{ab}$ at $\hat{g}$.



\begin{Proposition}\label{symbol_for_Bach}
Let $(M,g)$ be an $n$-dimensional Riemannian manifold, $n \geq 4$. In the conformal harmonic coordinates hold: 
 \begin{enumerate}
  \item[(a)] Assume that $g\in C^r$, $r>3$, then the coordinate representation of the Bach tensor for the determinant normalized metric can be regarded as a quasilinear operator 
  \[
   B(\hat{g})_{ab}=-\frac{1}{2}\Delta_{\hat{g}}^2\,\hat{g}_{ab}+T_3(\hat{g})
  \]
whose principal symbol $\sigma(B_{ab}(\hat{g}))$ satisfies
 \[
 \sigma(B_{ab}(\hat{g}))h=-\frac{1}{2}\abs{\xi}^4h_{ab}.
 \]
 \item[(b)] If $g\in C^r$, $r>n-1$, then the coordinate representation of the Fefferman-Graham obstruction tensor for the determinant normalized can be regarded as a quasilinear operator
 \[
  \mathcal{O}(\hat{g})_{ab}=-\frac{1}{2(n-3)}\Delta_{\hat{g}}^{n/2}\,\hat{g}_{ab}+T_{n-1}(\hat{g})
 \]
whose principal symbol $\sigma(\mathcal{O}_{ab}(\hat{g}))$ satisfies
 \[
 \sigma(\mathcal{O}_{ab}(\hat{g}))h=-\frac{1}{2(n-3)}\abs{\xi}^nh_{ab}.
 \]
 \end{enumerate}
 Here $\hat{g}$ is the determinant normalized metric and $\abs{\,\cdot\,}=\abs{\,\cdot\,}_{\hat{g}}$.
\end{Proposition}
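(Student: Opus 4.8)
The plan is to reduce both symbols to scalar powers of $\abs{\xi}$ by working from the Weyl-divergence form of the tensors together with the two gauge fixings, letting Proposition~\ref{prop_scalar_zero} carry the main burden. For the Bach tensor I would start from \eqref{Bach_via_Weyl}, $B_{ab}=\nabla^c\nabla^dW_{acbd}+\frac12 R^{cd}W_{acbd}$. The second term is quadratic in curvature, hence lies in $T_3(\hat g)$ and contributes neither to the fourth-order part nor to the principal symbol; so it suffices to analyze the linearization of $\nabla^c\nabla^dW_{acbd}$ at $\hat g$.

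First I would record the expansion of the Ricci tensor in arbitrary coordinates, which follows from \eqref{coordinate_formula_for_Ricci}, namely $R_{ab}=\frac12\Delta_{\hat g}\hat g_{ab}+\frac12(\partial_a\Gamma_b+\partial_b\Gamma_a)+T_1(\hat g)$, together with the resulting expansions of the Schouten and Weyl tensors. Then I would impose the two gauge conditions. The determinant normalization fixes $\det\hat g$, so admissible variations $h$ of $\hat g$ are $\hat g$-trace free, $\hat g^{ab}h_{ab}=0$; combined with Proposition~\ref{prop_scalar_zero}, which gives $R(\hat g)=T_1(\hat g)$, every occurrence of the scalar curvature and of its derivatives up to the relevant order is lower order and may be absorbed into $T_3$. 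This annihilates the scalar part of the Schouten tensor and, through the contracted Bianchi identity $\nabla^cP_{bc}=\frac{1}{2(n-1)}\nabla_bR$, the divergence-type terms as well. The conformal harmonic coordinate condition \eqref{scaled_Cs}, $\Gamma_a(\hat g)=2\partial_a\log f_{\hat g}$, enters only through Laplacians of $\Gamma_a$ in the curvature expressions; since $\Delta_{\hat g}\log f_{\hat g}=T_1(\hat g)$, as extracted from \eqref{mystique_equation} in the proof of Proposition~\ref{prop_scalar_zero}, one shows $\Delta_{\hat g}\Gamma_a=T_2(\hat g)$, so the $\Gamma$-terms drop to lower order. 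At the level of symbols these two fixings amount to the transverse-traceless constraints $\hat g^{ab}h_{ab}=0$ and $\xi^ch_{cb}=0$, under which all $\xi_a\xi_b$-, $\hat g_{ab}$- and $\xi^ch_{cb}$-type contributions vanish and only a scalar multiple of $\Delta^2_{\hat g}\hat g_{ab}$ survives; tracking the sign conventions gives the stated $-\frac12$.

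For the obstruction tensor I would invoke \eqref{Obstruction_tensor}, $\mathcal O_{ab}=\frac{1}{n-3}\Delta^{n/2-2}B_{ab}+\text{l.o.t.}$ The lower-order terms do not affect the top-order operator, so composing the symbol of $\Delta^{n/2-2}$ with that of $B_{ab}$ yields $\sigma(\mathcal O_{ab})h=-\frac{1}{2(n-3)}\abs{\xi}^nh_{ab}$, and the operator form $\mathcal O(\hat g)_{ab}=-\frac{1}{2(n-3)}\Delta^{n/2}_{\hat g}\hat g_{ab}+T_{n-1}(\hat g)$ follows because the only surviving top-order operator is the iterated scalar Laplacian. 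The hard part is the bookkeeping of the fourth- (respectively $n$-th) order terms: one must verify that, after imposing both gauges, every term other than the scalar Laplacian power cancels, and correctly pin down the numerical constant. The genuinely delicate point is rigorously justifying that substituting the non-local condition $\Gamma_a=2\partial_a\log f_{\hat g}$ leaves no hidden top-order contribution, and this is precisely where $\Delta_{\hat g}\log f_{\hat g}=T_1(\hat g)$ is indispensable, as it is the sole reason the a priori top-order terms built from $f_{\hat g}$ collapse to lower order.
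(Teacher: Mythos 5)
Your overall mechanism is the right one and matches the paper's: the two inputs that do all the work are Proposition~\ref{prop_scalar_zero} (so that $R(\hat g)=T_1(\hat g)$ and hence every term built from $R$ and its derivatives up to the relevant order is absorbed into $T_3(\hat g)$), the DeTurck form~\eqref{Ricci_deturck_form} of the Ricci tensor, and the observation that in the Bach tensor the contracted symbols $\Gamma_a(\hat g)$ only ever appear under a Laplacian, where the gauge identity $\Gamma_a(\hat g)=2\p_a\log f_{\hat g}$ together with $\Delta_{\hat g}\log f_{\hat g}=T_1(\hat g)$ gives $\Delta_{\hat g}\Gamma_a(\hat g)=T_2(\hat g)$ (this is~\eqref{Laplacian_of_Gamma} combined with Proposition~\ref{prop_scalar_zero}). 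The paper starts from the Schouten form~\eqref{Bach_tensor} and the contracted Bianchi identity rather than from~\eqref{Bach_via_Weyl}, but that difference is cosmetic: to exploit the $\Delta\Gamma_a$ mechanism you must in any case convert the double divergence of the Weyl tensor back into second covariant derivatives of Ricci and of $R$, which is exactly where~\eqref{Bach_tensor} begins. Part (b) by composing with $\Delta_{\hat g}^{n/2-2}$ is also the paper's argument.

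The genuine gap is the sentence claiming that ``at the level of symbols these two fixings amount to the transverse--traceless constraints $\hat g^{ab}h_{ab}=0$ and $\xi^c h_{cb}=0$.'' This is not how either gauge acts, and it would not prove the proposition. The statement asserts that the principal symbol is the \emph{scalar} $-\frac12\abs{\xi}^4$ acting on an \emph{arbitrary} symmetric perturbation $h_{ab}$; you cannot obtain that by restricting $h$ to a subspace on which the unwanted pieces of an unreduced symbol happen to vanish. Moreover, the conformal harmonic condition is a non-local identity satisfied by the background metric $\hat g$ in these coordinates ($f_{\hat g}$ is determined by $L_{\hat g}f_{\hat g}=0$); its linearization is not the pointwise transversality $\xi^c h_{cb}=0$, and the determinant normalization likewise enters the paper's argument only through the background identity $\log\abs{\hat g}=0$, not as a trace constraint on $h$. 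The correct route — which your own preceding sentences already set up — is to \emph{substitute} the gauge identities into the fourth-order part of the operator: since $\nabla_a\nabla_b R(\hat g)$, $(\Delta R(\hat g))\hat g_{ab}$ and $\Delta(\p_a\Gamma_b+\p_b\Gamma_a)$ are each literally of the form $T_3(\hat g)$ after the substitution, one arrives at the operator identity $B(\hat g)_{ab}=-\frac12\Delta_{\hat g}^2\hat g_{ab}+T_3(\hat g)$, whose linearization has the scalar symbol with no restriction on $h$ whatsoever. If your bookkeeping genuinely needs $\xi^ch_{cb}=0$ to kill a term, that term has not been correctly absorbed and the claim is not yet established; you should carry out the reduction via~\eqref{Ricci_deturck_form} and~\eqref{Laplacian_of_Gamma} explicitly and drop the transverse--traceless step entirely.
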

\begin{proof}
If $g\in C^r$, $r>3$, then by Theorem~\ref{Z-coord}, we have that $g_{jk}\in C^r$ in conformal harmonic coordinates. Consequently $\hat{g}\in C^r$ and the Bach tensor of $\hat{g}$ can be regarded as a distribution by the discussion at the beginning of this section.

The Bach tensor is defined for a general Riemannian metric tensor $g$ and in general coordinates by the formula~\eqref{Bach_tensor}
\[
B_{ab}=\nabla^c\nabla_aP_{bc}-\nabla^c\nabla_cP_{ab}+P^{cd}W_{acbd},
\]
where
\[
P_{ab} = \frac{1}{n-2} \left( R_{ab} - \frac{R}{2(n-1)} g_{ab} \right).
\]
By using the Bianchi identity $\nabla^aR_{ab}=\frac{1}{2}\nabla_bR$ we have that
\begin{align}\label{bach_formula}
 (n-2)B_{ab}&=\frac{1}{2}\nabla_a\nabla_bR-\frac{1}{2(n-1)}\nabla_a\nabla_bR-\nabla^c\nabla_cR_{ab}+\frac{1}{2(n-1)}(\nabla^c\nabla_cR)g_{ab} \nonumber \\
 \quad \quad & + T_3 +(n-2)P^{cd}W_{acbd}.
\end{align}
Here term $T_3=T_3(g)$ results from commuting covariant derivatives. Note that $P^{cd}W_{acbd}=T_2(g)$. The notation $T_k(g)$ was defined in~\eqref{Tk_definition}.

Let us then calculate in conformal harmonic coordinates and apply~\eqref{bach_formula} to the determinant normalized metric. By Proposition~\ref{prop_scalar_zero}, we have the condition 
\begin{equation}\label{scalar_curvature_first_ord}
 R(\hat{g})=T_1(\hat{g}).
\end{equation}
Thus we have that the Bach tensor for $\hat{g}$ in conformal harmonic coordinates satisfies  
\[
B(\hat{g})_{ab}=\widehat{\nabla}^c\widehat{\nabla}_cR(\hat{g})_{ab} +T_3(\hat{g}).
\]
Here $\widehat{\nabla}$ means the covariant derivative calculated with respect to the determinant normalized metric $\hat{g}$, and indices are raised, lowered and contracted with respect to $\hat{g}$. The coordinate formula for the Ricci curvature of $\hat{g}$ can be written by~\eqref{Ricci_deturck_form} as
\begin{equation}\label{Ricci_formula}
 R(\hat{g})_{ab}=-\frac{1}{2}\Delta_{\hat{g}} \hat{g}_{ab}+\frac{1}{2}(\p_a\Gamma(\hat{g})_b+\p_b\Gamma(\hat{g})_a) +T_1(\hat{g}).
\end{equation}
In conformal harmonic coordinates we have by Proposition~\ref{Z_cond_conf_inv} applied in the case $c=\abs{g}^{-1/n}$ that 
\begin{equation}\label{conformal_Gamma}
 \Gamma_a(\hat{g})=2\p_a\log f_{\hat{g}}.
\end{equation}
where $f_{\hat{g}}$ 
satisfies 
\[
 \Delta_{\hat{g}}f_{\hat{g}}=-\frac{n-2}{4(n-1)}R({\hat{g}})f_{\hat{g}}.
\]
Consequently, by using the formula~\eqref{delta_of_log}, we have that
\begin{align}\label{delta_log2}
 2\Delta_{\hat{g}}\log f_{\hat{g}}&=2\frac{1}{f_{\hat{g}}}\Delta_{\hat{g}} f_{\hat{g}}+2\abs{d\log f_{\hat{g}}}_{\hat{g}}^2=-\frac{n-2}{2(n-1)}R({\hat{g}})+\frac{1}{2}\Gamma^a({\hat{g}})\Gamma_a({\hat{g}}) \nonumber \\
 &=-\frac{n-2}{2(n-1)}R(\hat{g})+T_1(\hat{g}).
\end{align}
Combining the equations~\eqref{conformal_Gamma} and~\eqref{delta_log2}, and by using~\eqref{delta_for_normalized}
%
shows that
%
%

\begin{align}\label{Laplacian_of_Gamma}
 \Delta_{\hat{g}} \Gamma_a(\hat{g})&= -\hat{g}^{bc}\p_b\p_c\Gamma_a(\hat{g}) +\Gamma^a(\hat{g})\p_a \Gamma_a(\hat{g}) \nonumber
 =-2\hat{g}^{bc}\p_a\p_b\p_c\log f_{\hat{g}} +T_2(\hat{g})\nonumber \\
 &=-2\p_a(\hat{g}^{bc}\p_b\p_c\log f_{\hat{g}})+2(\p_a\hat{g}^{bc})\p_b\p_c\log f_{\hat{g}}+T_2(\hat{g}) \nonumber\\
 &=2\p_a(\Delta_{\hat{g}}\log f_{\hat{g}})- 2\p_a(\Gamma^b(\hat{g})\p_b\log f_{\hat{g}})+(\p_a\hat{g}^{bc})\p_b\Gamma_c(\hat{g})+T_2(\hat{g}) \nonumber\\
 &=2\p_a(\Delta_{\hat{g}}\log f_{\hat{g}})- \p_a(\Gamma^b(\hat{g})\Gamma_{b}(\hat{g}))+T_2(\hat{g}) \nonumber\\
 &=\p_a\Big(-\frac{n-2}{2(n-1)}R(\hat{g})+T_1(\hat{g})\Big)+T_2(\hat{g}) \nonumber \\
 &=\frac{2-n}{2(n-1)}\p_a R(\hat{g})+ T_2(\hat{g}).
\end{align}

We denote by $\widehat{\nabla}$ the covariant derivative with respect to $\hat{g}$. By applying the formulas ~\eqref{scalar_curvature_first_ord}, ~\eqref{Ricci_formula} and \eqref{Laplacian_of_Gamma} and we have that
\begin{multline*}
\widehat{\nabla}^c\widehat{\nabla}_cR(\hat{g})_{ab}=-\frac{1}{2}\widehat{\nabla}^c\widehat{\nabla}_c(\widehat{\nabla}^d\widehat{\nabla}_d\, \hat{g}_{ab}-(\p_a\Gamma(\hat{g})_b+\p_b\Gamma(\hat{g})_a)) + T_3(\hat{g})\\
=-\frac{1}{2}\Delta^2_{\hat{g}}\,\hat{g}_{ab}+\frac{2-n}{2(n-1)}\p_a\p_bR(\hat{g}) + T_3(\hat{g})=-\frac{1}{2}\Delta_{\hat{g}}^2\,\hat{g}_{ab}+ T_3(\hat{g}).
\end{multline*}
Thus we have that
\begin{equation}\label{Bach_is_Delta_square}
B_{ab}(\hat{g})=-\frac{1}{2}\Delta_{\hat{g}}^2\,\hat{g}_{ab} +T_3(\hat{g})
\end{equation}
and consequently the principal symbol satisfies
\[
\sigma(B_{ab}(\hat{g}))h=-\frac{1}{2}\abs{\xi}_{\hat{g}}^4h_{ab}.
\]

The claim regarding the Fefferman-Graham obstruction tensor $\mathcal{O}$ follows from the definition of the obstruction tensor
\[
 \mathcal{O}_{ab}(\hat{g})=\frac{1}{n-3}\Delta_{\hat{g}}^{n/2-2}B_{ab}(\hat{g})+T_{n-1}(\hat{g})
\]
and by using the equation~\ref{Bach_is_Delta_square}.
\end{proof}
We record a formula from the proof for later reference. We have in conformal harmonic coordinates
\begin{equation}\label{delta_of_gamma}
 \Delta_{\hat{g}}\Gamma_a(\hat{g})=T_2(\hat{g}),
\end{equation}
which follows by combining~\eqref{scalar_curvature_first_ord} and~\eqref{Laplacian_of_Gamma}. We note that this condition is similar to the condition $\Gamma_a=0$ of harmonic coordinates.
%

We next calculate a formula for the Cotton tensor for a determinant normalized metric in conformal harmonic coordinates. We show that in this situation the Cotton tensor can be regarded as overdetermined elliptic operator. An overdetermined elliptic partial differential operator is a partial differential operator whose principal symbol is injective. 

\begin{Proposition}\label{prop:Cotton}
Let $(M,g)$ be an $n$-dimensional Riemannian manifold, $n \geq 3$. In conformal harmonic coordinates hold:
 \begin{equation*}
  C_{abc}(\hat{g})=-\frac{1}{2(n-2)}\Delta_{\hat{g}}(\widehat{\nabla}_a\hat{g}_{bc}-\widehat{\nabla}_b\hat{g}_{ac}) +T_2(\hat{g}),
 \end{equation*}
 where $\widehat{\nabla}$ is the covariant derivative with respect to $\hat{g}$.
 
 The Cotton tensor can be regarded as an overdetermined elliptic operator for the determinant normalized metric in conformal harmonic coordinates.
\end{Proposition}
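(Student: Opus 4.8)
The plan is to imitate the derivation of the Bach formula in Proposition~\ref{symbol_for_Bach}, using the two facts available for $\hat g$ in conformal harmonic coordinates: the scalar curvature carries no second derivatives, $R(\hat g)=T_1(\hat g)$ by Proposition~\ref{prop_scalar_zero} (cf.~\eqref{scalar_curvature_first_ord}), and the contracted Christoffel symbols form a pure gradient, $\Gamma_a(\hat g)=2\p_a\log f_{\hat g}$ by Proposition~\ref{Z_cond_conf_inv} applied with $c=\abs{g}^{-1/n}$ (cf.~\eqref{conformal_Gamma}). First I would write the Cotton tensor~\eqref{Cotton_tensor} through the Schouten tensor and split off the Ricci and scalar pieces,
\[
(n-2)C_{abc}(\hat g)=\widehat\nabla_aR(\hat g)_{bc}-\widehat\nabla_bR(\hat g)_{ac}-\tfrac{1}{2(n-1)}\big((\widehat\nabla_aR(\hat g))\hat g_{bc}-(\widehat\nabla_bR(\hat g))\hat g_{ac}\big).
\]
Since $R(\hat g)=T_1(\hat g)$, its gradient is $T_2(\hat g)$, so the last bracket is absorbed into $T_2(\hat g)$ and only the antisymmetrized derivative of the Ricci tensor survives at top order.

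Next I would substitute the coordinate Ricci formula~\eqref{Ricci_formula}, namely $R(\hat g)_{bc}=-\tfrac12\Delta_{\hat g}\hat g_{bc}+\tfrac12(\p_b\Gamma_c(\hat g)+\p_c\Gamma_b(\hat g))+T_1(\hat g)$, and replace $\widehat\nabla_a$ by $\p_a$ at the level of third-order terms, since the connection corrections $\hat\Gamma^d_{ab}R_{dc}$ are $T_2(\hat g)$. The crux is the antisymmetrization in $a\leftrightarrow b$: the symmetric cross terms $\p_a\p_b\Gamma_c$ cancel because partial derivatives commute, and the remaining Christoffel contribution is proportional to $\p_c(\p_a\Gamma_b-\p_b\Gamma_a)$, which vanishes identically because $\Gamma_a(\hat g)=2\p_a\log f_{\hat g}$ is a gradient. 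Commuting $\p_a$ past $\Delta_{\hat g}$ modulo $T_2(\hat g)$ via~\eqref{delta_for_normalized} then yields
\[
(n-2)C_{abc}(\hat g)=-\tfrac12\Delta_{\hat g}\big(\p_a\hat g_{bc}-\p_b\hat g_{ac}\big)+T_2(\hat g),
\]
the third-order part of $C_{abc}$ as a quasilinear operator on $\hat g$. Passing to the covariant notation of the statement is legitimate at the level of the linearized operator: differentiating in a perturbation $h$ turns the inner $\p_a\hat g_{bc}$ into $\p_a h_{bc}$, and $\p_a h_{bc}$ and $\widehat\nabla_a h_{bc}$ share the same principal symbol.

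Finally I would read off the principal symbol by replacing $\p_a\mapsto\xi_a$ and $\Delta_{\hat g}\mapsto\abs{\xi}^2_{\hat g}$, obtaining $\sigma(C_{abc}(\hat g))h=-\tfrac{1}{2(n-2)}\abs{\xi}^2(\xi_ah_{bc}-\xi_bh_{ac})$ up to a nonzero scalar. To establish overdetermined ellipticity I must show this symbol is injective for $\xi\neq0$ on the tangent space of determinant normalized metrics, that is, on symmetric $h$ with $\hat g^{bc}h_{bc}=0$. Setting the symbol to zero gives $\xi_ah_{bc}=\xi_bh_{ac}$ for all indices, which forces each vector $b\mapsto h_{bc}$ to be parallel to $\xi$ and, after invoking the symmetry of $h$, yields $h_{bc}=\mu\,\xi_b\xi_c$; the trace-free constraint then gives $\mu\abs{\xi}^2=0$, hence $h=0$, proving injectivity.

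The main obstacle I anticipate is controlling the cancellations at top order so that the non-local dependence of $f_{\hat g}$ on $\hat g$ never contributes an uncancelled third-order term; this is guaranteed precisely because the Christoffel symbols enter the Ricci formula only through $\p_a\Gamma_b$, whose antisymmetric part vanishes in conformal harmonic coordinates. A second point needing care is that the determinant normalization is genuinely essential for the ellipticity conclusion: without the trace-free restriction the symbol has the nontrivial kernel element $h_{bc}=\mu\,\xi_b\xi_c$, so the operator is overdetermined elliptic only when regarded as acting on determinant normalized metrics.
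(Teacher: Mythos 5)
Your derivation of the coordinate formula is the same as the paper's: both use $R(\hat g)=T_1(\hat g)$ from Proposition~\ref{prop_scalar_zero} to discard the scalar part of the Schouten tensor, substitute the Ricci formula~\eqref{Ricci_formula}, and observe that the only surviving Christoffel contribution at third order is $\tfrac12\p_c(\p_a\Gamma_b-\p_b\Gamma_a)$, which vanishes because $\Gamma_a(\hat g)=2\p_a\log f_{\hat g}$ is a gradient; your handling of the non-local $f_{\hat g}$ and of the $\p_a$ versus $\widehat\nabla_a$ discrepancy is consistent with the paper's intent. Where you genuinely diverge is the ellipticity step. You keep the symbol $h\mapsto \abs{\xi}^2(\xi_ah_{bc}-\xi_bh_{ac})$ and prove injectivity only on the trace-free subspace $\hat g^{bc}h_{bc}=0$, arguing (correctly) that the full kernel is $h_{bc}=\mu\,\xi_b\xi_c$ and is killed by the trace constraint. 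The paper instead augments the operator by the term $\tfrac1n\Delta\big(\nabla_a(\log\abs{\hat g})\,\hat g_{bc}\big)$, which vanishes identically when $\abs{\hat g}=1$ but changes the linearization to $\xi_ah_{bc}-\tfrac1n\xi_a h\,\hat g_{bc}-\xi_bh_{ac}$, and proves this is injective on \emph{all} symmetric $h$. The two statements are equivalent in content, but the paper's version buys something concrete: it produces an honest overdetermined elliptic quasilinear system for the unconstrained matrix field $\hat g_{ab}$, which is what feeds directly into the bootstrap of Theorem~\ref{regularity_thm_interior}; with your formulation one must still explain how the pointwise constraint $\abs{\hat g}=1$ is incorporated into the PDE system before invoking elliptic regularity (the paper's remark after the proof, pairing $C_{abc}(\hat g)=0$ with $\Delta_{\hat g}\widehat\nabla_a\log\abs{\hat g}=0$, is exactly that repair). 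So your argument is correct but would need this one extra sentence to be usable downstream.
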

\begin{proof}
 By Proposition~\ref{prop_scalar_zero} and by the equation~\eqref{Ricci_formula} we have that
 \begin{align}\label{R_and_Ricci}
  R(\hat{g})&=T_1(\hat{g}) \nonumber \\
  R_{ab}(\hat{g})&=-\frac{1}{2}\Delta_{\hat{g}}\,\hat{g}_{ab}+\frac{1}{2}(\p_a\Gamma_b(\hat{g})+\p_b\Gamma_a(\hat{g}))+T_1(\hat{g}) .
 \end{align}
 We ease the notation for the rest of the proof by regarding all quantities depending on a Riemannian metric to be calculated with respect to $\hat{g}$ without further notice. By using the definition of the Cotton tensor, see~\eqref{Cotton_tensor}, and the equations ~\eqref{R_and_Ricci} we have that 
 \begin{align}\label{cotton_manipulate}
 (n-2)C_{abc}&=\nabla_a(R_{bc}-\frac{1}{2(n-1)}R\,\hat{g}_{bc})-\nabla_b(R_{ac}-\frac{1}{2(n-1)}R\,\hat{g}_{ac}) \nonumber\\
 &=\nabla_a R_{bc}-\nabla_b R_{ac} +T_2(\hat{g}) \\ 
 &=-\frac{1}{2}\nabla_a(\Delta\,\hat{g}_{bc}-(\p_b\Gamma_c+\p_c\Gamma_b))\nonumber \\
 &\quad\quad\quad+\frac{1}{2}\nabla_b(\Delta\,\hat{g}_{ac}-(\p_a\Gamma_c+\p_c\Gamma_a)) +T_2(\hat{g})\nonumber \\
 &=-\frac{1}{2}\Delta(\nabla_a\,\hat{g}_{bc}-\nabla_b\,\hat{g}_{ac})+ \frac{1}{2}\p_c(\p_a\Gamma_b-\p_b\Gamma_a)+T_2(\hat{g}). 
 \end{align}
 We next use the coordinate condition of conformal harmonic coordinates~\eqref{conformal_f} 
 \[
  \Gamma_a=2\p_a\log(f_{\hat{g}}) 
 \]
  for the conformal metric $\hat{g}$ to conclude be commuting partial derivatives that
 \[
   \p_a\Gamma_b-\p_b\Gamma_a=0.
 \]
Thus the equation~\ref{cotton_manipulate} reads
\begin{equation}\label{cotton_manipulate2}
 (n-2)C_{abc}=-\frac{1}{2}\Delta(\nabla_a\hat{g}_{bc}-\nabla_b\hat{g}_{ac})+T_2(\hat{g}).
\end{equation}

 To show that the Cotton tensor is an elliptic operator for the determinant normalized metric in conformal harmonic coordinates, we continue to manipulate the formula~\eqref{cotton_manipulate2} for the Cotton tensor. We write the leading order part of~\eqref{cotton_manipulate2} by adding a vanishing term $\log{\abs{\hat{g}}}=0$  as
 \begin{equation}\label{adhoc_gauge}
 -\frac{1}{2}\Delta\left(\nabla_a\hat{g}_{bc}-\frac{1}{n}\nabla_a(\log{\abs{\hat{g}}})\hat{g}_{bc}-\nabla_b\hat{g}_{ac}\right).
 \end{equation}
 We show that the operator 
 \begin{equation}\label{in_brackets}
  \hat{g}_{ab}\mapsto \nabla_a\hat{g}_{bc}-\frac{1}{n}\nabla_a(\log{\abs{\hat{g}}})\hat{g}_{bc}-\nabla_b\hat{g}_{ac}
 \end{equation}
 is a quasilinear overdetermined elliptic operator acting on $\hat{g}$.
 For this, we need to show that the principal symbol
 \begin{equation}\label{princip_symb}
  h_{ab}\mapsto \xi_ah_{bc}-\frac{1}{n}\xi_a h\hat{g}_{bc} -\xi_bh_{ac}
 \end{equation}
 of the linearization of the operator~\eqref{in_brackets} is injective. Here $h_{ab}$ is a symmetric matrix and $\xi\in \R^n\setminus \{0\}$. We have also denoted $h=\hat{g}^{ab}h_{ab}$ and used 
 \[
  \nabla_a(\log \abs{\hat{g}})=\hat{g}^{bc}\nabla_a\hat{g}_{bc}.
 \]
To show the injectivity, we set the symbol in~\eqref{princip_symb} to zero: 
 \begin{equation}\label{cotton_symbol}
 \xi_ah_{bc}-\frac{1}{n}\xi_a h\hat{g}_{bc} =\xi_bh_{ac}.
 \end{equation}
 Contracting the equation~\eqref{cotton_symbol} with $\hat{g}^{bc}$ and $\xi^a$ and $\hat{g}^{ac}\xi^b$ sequentially yield the equations
 \begin{align*}
 0&=h(\xi)_a, \quad a=1,\ldots,n \\
 \abs{\xi}^2(h_{bc}-\frac{1}{n}h\hat{g}_{bc})&=\xi_bh(\xi)_c, \quad b,c=1,\ldots,n \\
 h(\xi,\xi)-\frac{1}{n}\abs{\xi}^2h&=\abs{\xi}^2h.
 \end{align*}
 Here we have denoted $h(\xi)_a=h_{ab}\xi^b$, $h(\xi,\xi)=h_{ab}\xi^a\xi^b$ and $\xi^a=\hat{g}^{ab}\xi_b$.
 Together the first and the last of these equation shows that $h=0$. By substituting $h=0$ to the middle equation and by using the first equation again yields
 \[
 h_{ab}=0, \quad a,b=1,\ldots,n.
 \]
 Thus the principal symbol is injective, and we can regard the Cotton tensor as an overdetermined elliptic for the determinant normalized metric in conformal harmonic coordinates.
\end{proof}
In the proof of Proposition~\ref{prop:Cotton} above, we added the term $\frac{1}{n}\nabla_a(\log{\abs{\hat{g}}})\hat{g}_{bc}$ in~\eqref{adhoc_gauge} in a seemingly arbitrary way. One way to interpret this step is by considering an example where the Cotton tensor satisfies an equation $C(\hat{g})=0$ in conformal harmonic coordinates. In this case $\hat{g}$ is a symmetric positive definite matrix field, which satisfies the overdetermined system
 \begin{align*}
  0&=C_{abc}(\hat{g})=-\frac{1}{2}\Delta_{\hat{g}}(\widehat{\nabla}_a\hat{g}_{bc}-\widehat{\nabla}_b\hat{g}_{ac})+T_2(\hat{g}) \\
  0&=\Delta_{\hat{g}}\widehat{\nabla}_a\log(\hat{g})=\hat{g}^{bc}\Delta_{\hat{g}}\widehat{\nabla}_a\hat{g}_{bc}+T_2(\hat{g}).
 \end{align*}
 This system for $\hat{g}$ can be seen to be overdetermined elliptic by calculations similar to those done in the proof of Proposition~\ref{prop:Cotton}.

 
 
We will next prove regularity results for conformal curvature tensors. We have shown that Bach, Fefferman-Graham obstruction and Cotton tensors are elliptic in conformal harmonic coordinates for the determinant normalized metric. As discussed before in this section, the situation for the Weyl tensor is more complicated. 
To prove a regularity result for the Weyl tensor, we recall the formula 
\[
 B_{ab}=\nabla^c\nabla^dW_{acbc} +\frac{1}{2}R^{cd}W_{acbd}.
\]
This formula together with Proposition~\ref{symbol_for_Bach} imply that the principal symbol of the Weyl tensor is injective for a determinant normalized metric in conformal harmonic coordinates: If $\sigma(W_{acbd}(\hat{g}))h=0$, then $\xi^c\xi^d\sigma(W_{acbd}(\hat{g}))h=\sigma(\nabla^c\nabla^d W_{acbd}(\hat{g}))h=\sigma(B_{ab}(\hat{g}))h=0$ and thus $h_{ab}=0$. Therefore, we are able to consider the Weyl tensor as an overdetermined elliptic operator. 
\begin{Theorem}\label{regularity_thm_interior}
Let $(M,g)$ be an $n$-dimensional Riemannian manifold without boundary, and let $g \in C^r$, in some system of local coordinates. 
\begin{enumerate}
\item[(a)]
If $n \geq 4$, $r>2$, and if $W_{abc\phantom{d}}^{\phantom{abc}d}$ is in $C^{s}$, $C^\infty$ or $C^\omega$, for some $s> r-2$, $s\notin \Z$, in conformal harmonic coordinates, then $\abs{g}^{-1/n} g_{ab}$ is in $C^{s+2}$, $C^\infty$ or $C^\omega$ respectively in these coordinates. 

\item[(b)]
If $n = 3$, $r>2$, and if $C_{abc}$ is in $C^{s}$, $C^\infty$ or $C^\omega$, for some $s > r-3$, $s\notin \Z$, in conformal harmonic coordinates, then $\abs{g}^{-1/n} g_{ab}$ is in $C^{s+3}$, $C^\infty$ or $C^\omega$ respectively in these coordinates.

\item[(c)]
If $n \geq 4$ is even, $r > n-1$, and if $\abs{g}^{\frac{n-2}{2n}}\mathcal{O}_{ab}$ is in $C^s$, $C^\infty$ or $C^\omega$, for some $s > r-n$, $s\notin \Z$, in conformal harmonic coordinates, then $\abs{g}^{-1/n} g_{ab}$ is in $C^{s+n}$, $C^\infty$ or $C^\omega$ respectively in these coordinates. Here $\mathcal{O}=\mathcal{O}_{(n)}$.

\end{enumerate}

Especially, in the situations above, if a tensor $W$, $C$ or $\mathcal{O}$ vanishes, then $\abs{g}^{-1/n}g$ is real analytic $C^\omega$ in conformal harmonic coordinates.

\end{Theorem}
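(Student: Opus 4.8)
The plan is to read each of (a), (b), (c) as a quasilinear elliptic regularity statement for the determinant normalized metric $\hat g=\abs{g}^{-1/n}g$, regarded as an unknown symmetric positive-definite matrix field subject to the pointwise constraint $\det\hat g=1$. The conformal weights in the hypotheses are chosen precisely so that $W_{abc}{}^{d}$, $C_{abc}$ and $\abs{g}^{\frac{n-2}{2n}}\mathcal O_{ab}$ are nothing but the Weyl, Cotton and obstruction tensors \emph{of} $\hat g$ (using $W(cg)_{abc}{}^{d}=W(g)_{abc}{}^{d}$, $C(cg)=C(g)$ for $n=3$, and $\mathcal O(cg)=c^{-\frac{n-2}{2}}\mathcal O(g)$); thus each hypothesis says that the corresponding curvature tensor of $\hat g$ lies in $C^{s}$ in conformal harmonic coordinates. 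The three facts that turn this into a genuine PDE for $\hat g$ are already in hand: by Proposition~\ref{symbol_for_Bach} the obstruction tensor is the \emph{determined} $n$-th order operator $\mathcal O(\hat g)_{ab}=-\tfrac{1}{2(n-3)}\Delta_{\hat g}^{n/2}\hat g_{ab}+T_{n-1}(\hat g)$ with scalar principal symbol $-\tfrac{1}{2(n-3)}\abs{\xi}^{n}$; by Proposition~\ref{prop:Cotton} the Cotton tensor is overdetermined elliptic of order three; and by the Bach--Weyl identity~\eqref{Bach_via_Weyl} together with Proposition~\ref{symbol_for_Bach} the Weyl tensor has injective principal symbol, hence is overdetermined elliptic of order two. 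In every case the order $m\in\{2,3,n\}$ of the operator equals the claimed derivative gain.

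Since these operators are elliptic — determined for the obstruction, overdetermined (injective symbol) for Weyl and Cotton — elliptic regularity gives the sharp gain of $m$ derivatives, $\hat g\in C^{s+m}$ when the tensor of $\hat g$ is in $C^{s}$. For the Weyl tensor I would make this concrete by applying $\widehat\nabla^{c}\widehat\nabla^{d}$ and using~\eqref{Bach_via_Weyl}, which converts the hypothesis $W\in C^{s}$ into the determined fourth-order equation $-\tfrac12\Delta_{\hat g}^{2}\hat g_{ab}+T_{3}(\hat g)=\widehat\nabla^{c}\widehat\nabla^{d}W_{acbd}-\tfrac12 R^{cd}W_{acbd}$, whose right-hand side costs two derivatives of $W$ and hence lies in $C^{s-2}$, yielding $\hat g\in C^{s+2}$; for the Cotton tensor one argues analogously, either through the auxiliary determined system indicated after Proposition~\ref{prop:Cotton} or through the standard left-parametrix for its injective symbol. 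A point needing care is the non-local function $f=f_{\hat g}$ in the coordinate condition $\Gamma_{a}(\hat g)=2\p_{a}\log f_{\hat g}$: the computations of Propositions~\ref{prop_scalar_zero} and~\ref{symbol_for_Bach} already eliminated $f$ from the principal part in favour of $R(\hat g)=T_{1}(\hat g)$, so the non-locality is confined to subprincipal terms and does not affect ellipticity, while the regularity of $f$ is itself controlled by that of $\hat g$ through the second-order elliptic equation $L_{\hat g}f=0$.

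The bootstrapping is the technical core. Starting from $\hat g\in C^{r}$ and the curvature datum in $C^{s}$, I would run interior Schauder estimates for the (reduced) determined elliptic operator (\cite[Ch.~6]{GT} and their higher-order analogues) as a quasilinear iteration: the principal coefficients are algebraic in $\hat g^{ab}$, hence as regular as $\hat g$, while the terms of type $T_{\bullet}(\hat g)$ and the $f$-dependent terms are subprincipal, so if $\hat g\in C^{t}$ at some stage they are controlled one order below the top, and a single Schauder step improves $\hat g$ by one Hölder increment. Iterating terminates at the asserted regularity $C^{s+m}$ — the hypothesis $s\notin\Z$ keeping the Hölder exponents non-integral — and the $C^{\infty}$ statement follows by applying this for every finite $s$. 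The main obstacle I anticipate is precisely this quasilinear bookkeeping: verifying at each stage that the nonlinear coefficients and the non-local $f$ are already regular enough for the next step, so that no circularity enters, rather than any single hard estimate.

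Finally, for the $C^{\omega}$ conclusions and the closing assertion that a vanishing tensor forces $\abs{g}^{-1/n}g$ to be real analytic, I would first use the preceding argument to reach $\hat g\in C^{\infty}$ and then invoke analytic elliptic regularity (\cite[Appendix~J]{Besse}). The reduced operators are elliptic with coefficients that are analytic — indeed polynomial in $\hat g$, $\hat g^{-1}$ and the derivatives of $\hat g$ — so a smooth solution of an equation with analytic right-hand side is real analytic; when the curvature tensor vanishes the right-hand side is the purely nonlinear $T_{\bullet}(\hat g)$, giving a homogeneous analytic elliptic equation whose smooth solution $\hat g$ is therefore $C^{\omega}$. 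Throughout, the constraint $\det\hat g=1$ is preserved and, as in Proposition~\ref{prop:Cotton}, is exactly what renders the symbols injective in the overdetermined Weyl and Cotton cases.
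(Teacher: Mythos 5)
Your proposal is correct and follows essentially the same route as the paper: conformal invariance identifies the hypotheses with $W(\hat g)$, $C(\hat g)$, $\mathcal{O}(\hat g)\in C^s$ for the determinant normalized metric, ellipticity comes from Propositions~\ref{symbol_for_Bach} and~\ref{prop:Cotton} together with the symbol-level use of~\eqref{Bach_via_Weyl} for the Weyl tensor, and the conclusion follows by elliptic bootstrap plus analytic elliptic regularity for the $C^\omega$ case. The paper simply delegates the quasilinear bookkeeping you describe to the proof of \cite[Theorem 1.2]{LS2} and to \cite[Theorem 41]{Besse}; note only that your concrete reduction of the Weyl case to a fourth-order determined equation by applying $\widehat\nabla^c\widehat\nabla^d$ would need distributional (negative H\"older) spaces when $s<2$, whereas treating $W$ directly as an overdetermined second-order elliptic operator, as you also indicate and as the paper does, avoids this.
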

\begin{proof}
We argue in conformal harmonic coordinates. We have that $g$ is in $C^r$ in these coordinates. 
By using the conformal invariance of conformal curvature tensors, we have that $W_{abc\phantom{d}}^{\phantom{abc}d}(\hat{g})$, $C_{abc}(\hat{g})$ and $\mathcal{O}_{ab}(\hat{g})$ are in $C^s$. 
These tensors can be considered as quasilinear elliptic operators for the determinant normalized metric by Propositions~\ref{symbol_for_Bach} and~\ref{prop:Cotton}. The ellipticity of Weyl tensor was discussed above. The claims of the theorem follow then by applying elliptic regularity results for linear elliptic systems together with a bootstrap argument.  Details for the proof for the cases where $s\in \R$ can be found from the proof of~\cite[Theorem 1.2]{LS2}. If the determinant normalized metric $\hat{g}_{ab}=\abs{g}^{-1/n} g_{ab}$ has enough regularity initially, the claims also follow from~\cite[Theorem 41]{Besse}. By first proving that $\hat{g}_{ab}\in C^k$, where $k\in \mathbb{N}$ is large enough, the case $s=\omega$ follow from~\cite[Theorem 41]{Besse}. See also~\cite[Theorem 6.7.6 and 6.8.1]{Mo66} and~\cite{ADN64}.
\end{proof}

Boundary regularity results for conformal curvature tensors involve defining proper boundary conditions for higher order nonlinear elliptic systems. Discussing these boundary conditions is outside the scope of this work. We expect that natural boundary regularity results for conformal curvature tensors can be proven by using conformal harmonic coordinates. Our expectation is based on the facts that the conformal harmonic coordinates are constructed by solving well-behaved linear elliptic equations and that the conformal invariance is fixed by explicit normalization of the determinant of the metric. We refer to 
~\cite{Gr08, Mo66} for discussions regarding boundary conditions for higher order elliptic systems. We also mention that boundary regularity for the Ricci tensor in harmonic coordinates has been studied before, see e.g.~\cite{AKKLT04}.

 Next we give a couple of applications of the real analytic regularity results in Theorem~\ref{regularity_thm_interior}. 
 The first result says that if on a Bach or obstruction flat manifold a point has a neighborhood conformal to the Euclidean space, then the same property holds for all points of the manifold. Recall that if the Weyl tensor vanishes on a neighborhood of a point in dimension $\geq 4$, there exists isothermal coordinates on neighborhood of the point, where $g_{ab}=c(x)\delta_{ab}$.
 \begin{Theorem}\label{ucp_for_Bach_flat}
  Let $(M,g)$ be an obstruction flat, $\mathcal{O}_{(n)}\equiv 0$, connected Riemannian manifold without boundary of even dimension $n\geq 4$ with $g\in C^r$, $r>n-1$. Assume that $W(g)=0$ on an open set of $M$. Then $(M,g)$ is locally conformally flat i.e.
  \[
   W(g)\equiv 0 \text{ on } M.
  \]
 \end{Theorem}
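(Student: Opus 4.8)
The plan is to exploit the real analyticity provided by Theorem~\ref{regularity_thm_interior}. Since $(M,g)$ is obstruction flat, $\mathcal{O}_{(n)}\equiv 0$, part (c) of that theorem tells us that in conformal harmonic coordinates around \emph{any} point, the determinant normalized metric $\hat{g}=\abs{g}^{-1/n}g$ is real analytic $C^\omega$. Thus every point of $M$ has a neighborhood on which, after passing to conformal harmonic coordinates and normalizing the determinant, the metric representative is real analytic. The key point is that the Weyl tensor $W$ is a conformal invariant (up to a scalar factor, $W(cg)=cW(g)$), so $W(g)$ vanishes on an open set if and only if $W(\hat{g})$ vanishes on that open set; and $W(\hat{g})$ is now a real analytic tensor field expressed in terms of the real analytic metric $\hat{g}$ and its derivatives.

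First I would fix the open set $U\subset M$ on which $W(g)=0$, and let $V\subset M$ be the (nonempty, open) set of points having a neighborhood on which $W(g)$ vanishes. The goal is to show $V=M$, which gives $W\equiv 0$ on $M$ by connectedness. Since $V$ is open by definition and $M$ is connected, it suffices to show $V$ is closed. So let $p\in \partial V$ (the boundary of $V$ in $M$). I would choose conformal harmonic coordinates on a connected neighborhood $\Omega$ of $p$; by Theorem~\ref{regularity_thm_interior}(c), $\hat{g}$ is $C^\omega$ on $\Omega$, and hence the component functions of $W(\hat{g})$ are real analytic on $\Omega$. Because $p\in\partial V$, the neighborhood $\Omega$ meets $V$, so $W(\hat{g})$ vanishes on some nonempty open subset of $\Omega$.

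The crux of the argument is then the identity theorem for real analytic functions: a real analytic function on a connected open set that vanishes on a nonempty open subset vanishes identically. Applying this to each component of $W(\hat{g})$ on the connected set $\Omega$, I conclude $W(\hat{g})\equiv 0$ on all of $\Omega$, hence $W(g)\equiv 0$ on $\Omega$ by conformal invariance. This shows $p$ has a neighborhood on which $W$ vanishes, so $p\in V$, proving $V$ is closed. Combined with openness and connectedness, $V=M$ and therefore $W(g)\equiv 0$ on $M$.

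The main obstacle, and the step requiring genuine care, is the real analytic regularity from Theorem~\ref{regularity_thm_interior}(c): this is precisely what upgrades the obstruction flatness into a statement that $\hat{g}$ (and hence $W(\hat{g})$) is real analytic, which in turn makes the identity theorem applicable. Without $C^\omega$ regularity the unique continuation of the vanishing of $W$ from an open set to the whole manifold would fail in general. A minor technical point is to ensure the conformal harmonic coordinate neighborhood $\Omega$ can be taken connected and to track that $W$ transforms by a nonvanishing conformal factor so that its vanishing is genuinely a conformally invariant condition; both are routine given the results already established in the excerpt.
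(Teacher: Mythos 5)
Your proposal is correct and follows essentially the same route as the paper's proof: both arguments take the maximal open set where $W$ vanishes, pick conformal harmonic coordinates around a boundary point, invoke Theorem~\ref{regularity_thm_interior}(c) to get $\hat g\in C^\omega$, and conclude $W(\hat g)\equiv 0$ on that neighborhood by real analytic unique continuation together with the conformal invariance of $W$. The only (harmless) cosmetic difference is that you phrase the argument via the boundary of the set $V$ and explicitly note that $\Omega$ should be connected, which the paper leaves implicit.
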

 \begin{proof}
  Let $B\subset M$ be the largest open set where $W=0$. By assumption $B\neq \emptyset$. Let $p\in \p B$ and let $Z$ be conformal harmonic coordinates on a neighborhood $\Omega$ of $p$. Since the obstruction tensor $\mathcal{O}_{(n)}(g)$ vanishes in $M$, we have by Theorem~\ref{regularity_thm_interior} that
  \begin{equation}\label{hatg_real_analytic}
   \hat{g}_{ab}\in C^\omega(\Omega).
  \end{equation}
  Since the Weyl tensor is conformally invariant, we have that 
  \[
   W(\hat{g})=0 \text{ on } B\cap \Omega. 
  \]
  Since the Weyl tensor is a polynomial of the components of the metric, its inverse and its derivatives up to second order, we also have by Equation~\ref{hatg_real_analytic} that
  \[
   W(\hat{g})\in C^\omega(\Omega).
  \]
  Thus $W(\hat{g})=0$ in $\Omega$ by real analyticity. Since $W$ is conformally invariant, it follows that $W(g)=0$ in $\Omega$ and thus $ \Omega\subset B$. Especially we have that $p\in B$, which shows that $B$ is closed. We have proven that $B$ is open, closed and nonempty, and thus $B=M$ since $M$ is connected. 
\end{proof}

The Cartan-Janet theorem~\cite{Ja26, Ca27} states that a real analytic Riemannian manifold can be locally isometrically embedded into $\R^{n(n+1)/2}$. Since an obstruction flat metric is real analytic up to a conformal factor, it is more natural to consider local conformal embeddings into a Euclidean space. In this case the dimension of the Euclidean space can also be lowered by one:
\begin{Theorem}\label{conformal_Cartan_Janet}
 An obstruction flat Riemannian manifold $(M,g)$, $\mathcal{O}_{(n)}\equiv 0$, of even dimension $\geq 4$ with $g\in C^{r}$, $r>n-1$, can be locally embedded in $\R^N$, $N=n(n+1)/2-1$, by a conformal mapping.
\end{Theorem}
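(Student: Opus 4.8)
The plan is to reduce the statement to the classical Cartan--Janet theorem by first upgrading the regularity of a metric in the conformal class and then exploiting the extra degree of freedom that a conformal (as opposed to isometric) embedding provides.

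First I would fix a point $p\in M$ and choose conformal harmonic coordinates on a neighborhood of $p$, which exist by Theorem~\ref{Z-coord}. Since $\mathcal{O}_{(n)}\equiv 0$, the tensor $\abs{g}^{\frac{n-2}{2n}}\mathcal{O}_{ab}$ vanishes identically, so part (c) of Theorem~\ref{regularity_thm_interior} (here $n\geq 4$ is even and $r>n-1$) gives that the determinant normalized metric $\hat{g}=\abs{g}^{-1/n}g$ is real analytic $C^\omega$ in these coordinates. Because $g=\abs{g}^{1/n}\hat{g}$ is a positive conformal multiple of $\hat{g}$, any local conformal embedding of $(M,\hat{g})$ into Euclidean space is simultaneously a local conformal embedding of $(M,g)$. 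Hence it suffices to conformally embed the real analytic metric $\hat{g}$ into $\R^N$ with $N=n(n+1)/2-1$.

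Next I would reformulate the conformal embedding problem as a determined analytic system. A local conformal embedding is a pair $(u,\lambda)$, where $u=(u^1,\ldots,u^N)\colon M\to\R^N$ is an immersion and $\lambda>0$ is a function, such that $u^\ast e=\lambda\,\hat{g}$ for the Euclidean metric $e$, i.e.
\[
 \sum_{k=1}^{N}\p_a u^k\,\p_b u^k=\lambda\,\hat{g}_{ab},\qquad a,b=1,\ldots,n.
\]
Equivalently, this is an isometric embedding into $\R^N$ of \emph{some} conformal representative $\lambda\,\hat{g}$, with $\lambda$ left free. It is a system of $n(n+1)/2$ equations in the $N+1=n(n+1)/2$ unknown functions $u^1,\ldots,u^N,\lambda$. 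The single additional unknown $\lambda$, which is absent in the isometric problem, is exactly what allows the embedding dimension to be one lower than in the Cartan--Janet theorem~\cite{Ja26,Ca27}.

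Finally I would solve this system using the analytic machinery underlying Cartan--Janet. Since $\hat{g}$ is $C^\omega$, the associated exterior differential system has real analytic coefficients; one verifies that it is involutive with Cartan characters matching the count above, so that the Cartan--K\"ahler theorem furnishes a local $C^\omega$ solution $(u,\lambda)$ with $\lambda>0$ and $du^1,\ldots,du^N$ pointwise independent. Shrinking the neighborhood of $p$ then makes $u$ an embedding, and by construction it is conformal for $\hat{g}$, hence for $g$. The hard part is precisely this involutivity verification: one must carry out the Cartan--K\"ahler analysis for the conformal embedding system, which runs parallel to the classical isometric argument but with the free conformal factor $\lambda$ absorbing one unit of codimension. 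Alternatively, one may simply invoke the conformal analogue of the Cartan--Janet theorem, which asserts that every real analytic conformal $n$-manifold embeds locally and conformally into $\R^{n(n+1)/2-1}$.
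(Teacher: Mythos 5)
Your proposal is correct and follows essentially the same route as the paper: use conformal harmonic coordinates and Theorem~\ref{regularity_thm_interior}(c) to make $\hat{g}=\abs{g}^{-1/n}g$ real analytic, then apply the conformal Cartan--Janet theorem (the paper cites Jacobowitz--Moore~\cite[Theorem 2]{JM73}, which is precisely the ``conformal analogue'' you invoke in your last sentence) and compose with the coordinate chart. Your sketch of re-deriving that embedding theorem via Cartan--K\"ahler is extra work the paper does not attempt, and as you note yourself the involutivity verification is the nontrivial content there, so citing~\cite{JM73} is the cleaner finish.
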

\begin{proof}
Let $p\in M$ and let $Z$ be conformal harmonic coordinates on a neighborhood $\Omega$ of $p$. Since $(M,g)$ is obstruction flat, we have that $\hat{g}\in C^\omega$ by Theorem~\ref{regularity_thm_interior}. By the Cartan-Janet theorem for conformal embeddings~\cite[Theorem 2]{JM73} there exists a mapping $I:(B,\hat{g})\to \R^N$, where $B$ is an open subset $Z(\Omega)\subset \R^n$, such that $I^*e=c\,\hat{g}$ where $c$ is a positive function and $N=n(n+1)/2-1$. Here $e$ is the Euclidean metric. Then $I\circ Z$ is a local conformal embedding of $(M,g)$ to $(\R^N,e)$ with $\overline{c}:=(c\,\abs{g}^{-1/n})|_Z$ as the conformal factor:
\[
 (I\circ Z)^*e=Z^*I^*e=Z^*(c\,\hat{g})=(c\,\abs{g}^{-1/n})|_Z\, g=\overline{c}\,g. \qedhere
\]
\end{proof}

\section{Conformal wave coordinates in Lorentzian geometry}\label{sec_lorentzian_manifolds}
In this final section we briefly discuss conformal wave coordinates on Lorentzian manifolds. We define these coordinates in analogy to the conformal harmonic coordinates in the Riemannian setting. We assume for simplicity that the manifolds and the Lorentzian metrics considered in this section are $C^\infty$ smooth. We also assume that the considered manifolds have no boundary and the dimensions of the manifolds are at least $3$. We keep the exposition short and refer to~\cite{Ri09} for standard definitions in Lorentzian geometry.

The conformal Laplacian has a direct analogue on a Lorentzian manifold called the \emph{conformal wave operator}
\[
 \mathcal{L}_g = \square_g + \frac{n-2}{4(n-1)}R(g),
\]
where $\square_gu=\nabla^a\nabla_au=-|g|^{-1/2}\p_a\left(|g|^{1/2}g^{ab}\p_bu\right)$ and $\abs{g}=-\det(g)$.
The conformal wave operator has the same conformal invariance properties as its Riemannian counterpart, see e.g.~\cite{CG18}. 
\begin{Definition}(Conformal wave coordinates)\label{def:Z_coords_Lorentzian}
 Let $(M,g)$ be a Lorentzian manifold of dimension $n\geq 3$. Local coordinates $(Z^1,\ldots,Z^n)$ on an open set $\Omega$ are called conformal wave coordinates if the components functions $Z^k$, $k=1,\ldots,n$, are functions on $\Omega$ of the form
\begin{equation}
Z^k=\frac{f^k}{f},
\end{equation}
where $f^k$ and $f>0$ satisfy $\mathcal{L}_gf^k=\mL_gf=0$.
\end{Definition}
We prove the existence of conformal wave coordinates.
\begin{Proposition}\label{Z-coord_lorentz}
Let $(M,g)$ be Lorentzian manifold, $\dim(M)=n\geq 3$. Let $p\in M$. There exists a $C^\infty$ smooth local coordinate system $Z=(Z^1,\ldots,Z^n)$ on a neighborhood of $p$ whose coordinate functions $Z^k$, $k=1,\ldots,n$, are of the form
\begin{align}\label{Z_form}
Z^k&=\frac{f^k}{f}, \\
\mL_gf^k&=\mL_gf=0, \quad f>0. \nonumber
\end{align}
\end{Proposition}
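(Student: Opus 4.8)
The plan is to mirror the proof of Theorem~\ref{Z-coord}, replacing the elliptic solvability of Lemma~\ref{prescribed_sols} by local solvability of the Cauchy problem for the conformal wave operator. The operator $\mathcal{L}_g=\square_g+\frac{n-2}{4(n-1)}R(g)$ is normally hyperbolic: its principal symbol is $g^{ab}\xi_a\xi_b$, so the Cauchy problem with initial data on a spacelike hypersurface is locally well posed, and with smooth coefficients and smooth data the solution is $C^\infty$ on a two-sided neighborhood of the hypersurface. All that is then needed is to arrange, at the single point $p$, the same pointwise data for the solutions as in the Riemannian construction, and to invoke the inverse function theorem.

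First I would fix a spacelike $C^\infty$ hypersurface $S$ through $p$ and local coordinates $(x^1,\ldots,x^n)$ near $p$ with $x(p)=0$ and $S=\{x^n=0\}$; since $S$ is spacelike it is non-characteristic for $\mathcal{L}_g$, so the corresponding Cauchy problem is well posed. The goal is to produce solutions of $\mathcal{L}_g f^k=0$ and $\mathcal{L}_g f=0$ realizing $f^k(p)=0$, $df^k(p)=dx^k$ and $f(p)=1$, the very data used in the Riemannian argument. I would prescribe Cauchy data on $S$ by setting, for $k=1,\ldots,n-1$, $f^k|_S=x^k$ and $\partial_n f^k|_S=0$; setting $f^n|_S=0$ and $\partial_n f^n|_S=1$; and setting $f|_S=1$, $\partial_n f|_S=0$. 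Solving these Cauchy problems yields $C^\infty$ functions $f^1,\ldots,f^n,f$ on a neighborhood of $p$.

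With $F=(f^1,\ldots,f^n)$ and $Z^k=f^k/f$, I would conclude exactly as in Theorem~\ref{Z-coord}. Since $f(p)=1>0$, continuity gives $f>0$ near $p$. By construction $F(p)=0$ and $DF(p)=I_{n\times n}$, so the formula~\eqref{differential_of_Z} gives
\[
DZ(p)=\frac{1}{f(p)}DF(p)-\frac{1}{f(p)^2}F(p)\otimes df(p)=I_{n\times n},
\]
and the inverse function theorem shows that $Z$ is a local coordinate system on a neighborhood of $p$, of the required form~\eqref{Z_form}.

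The point requiring care is realizing the prescribed first-order data $df^k(p)=dx^k$ through Cauchy data on $S$: the restriction $f^k|_S$ controls only the derivatives tangent to $S$, while the transverse derivative $\partial_n f^k|_S$ is the free Cauchy datum, so one must check that the two together produce the full differentials $dx^k$ at $p$. With the choices above this is immediate, since the tangential data $f^k|_S=x^k$ give $\partial_j f^k(p)=\delta^k_j$ for $j<n$ while the transverse data fix $\partial_n f^k(p)$. The only other subtlety is ensuring existence on a full two-sided neighborhood rather than a one-sided domain of dependence; for a linear normally hyperbolic equation this follows by solving towards both the future and the past of $S$. Standard Lorentzian background for these facts can be found in~\cite{Ri09}.
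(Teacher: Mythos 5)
Your proposal is correct and follows essentially the same route as the paper: both replace the elliptic solvability of Lemma~\ref{prescribed_sols} by local solvability of the Cauchy problem for the normally hyperbolic operator $\mathcal{L}_g$ on a spacelike hypersurface through $p$, prescribe Cauchy data so that $f(p)=1$, $f^k(p)=0$, $df^k(p)=dx^k$, and conclude via~\eqref{differential_of_Z} and the inverse function theorem. The only (immaterial) differences are notational — the paper takes the time coordinate as $t=x^1$-direction data and justifies solvability by passing to a locally globally hyperbolic neighborhood with a reference to~\cite{Mi19} and~\cite{BGP07}, whereas you invoke the same standard well-posedness directly.
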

\begin{proof}
In the Lorentzian setting finding solutions $f^k$ and $f$ such that $Z=(f^1/f,\ldots, f^n/f)$ is a coordinate chart is more straightforward than in the Riemannian setting. This is because the conformal wave operator is hyperbolic and we may thus prescribe Cauchy data of solutions of the conformal wave operator on a spacelike  Cauchy hypersurface. 

Let $p\in M$. We first recall that all Lorentzian manifolds can be regarded locally as globally hyperbolic manifolds~\cite[Theorem 2.7]{Mi19}. Thus there is a neighborhood $\Omega$ of $p$ with coordinates $(t,x')$ and a local $C^\infty$ smooth spacelike Cauchy surface $S=\{t=0\}\subset \Omega$ such that we may find a $C^\infty$ smooth solution $u$ to the equation
\begin{align}\label{conf_wave_equation}
 \mL_g u(t,x')&=0, \quad (t,x')\in \Omega \\
 u(0,x')&=u_0(x') \text{ and } \p_tu(0,x')=u_1(x'), \quad x'\in S
\end{align}
where $u_0$ and $u_1$ are any $C^\infty$ smooth functions defined on $S=\{t=0\}$, see e.g.~\cite[Theorem 3.2.11]{BGP07}. Let us construct the functions $f^k$ and $f$ as follows. We let $f^1$ solve~\eqref{conf_wave_equation} with $f_1(0,x')\equiv 0$ and $\p_tf^1(0,x')\equiv 1$. Denote $x'=(x'_2,\ldots,x'_n)$. We let $f^k$, $k=2,\ldots,n$, solve~\eqref{conf_wave_equation} with $f^k(0,x')=x'_k$ and $\p_tf^k(0,x')\equiv 0$ and we let $f$ solve~\eqref{conf_wave_equation} with $f^k(0,x')\equiv 1$ and $\p_tf^1(0,x')\equiv 0$. Then we have $DZ|_S=I_{n\times n}$ by the calculation in~\eqref{differential_of_Z}. Thus $Z$ is invertible on a neighborhood of $(t,x')=(0,0)$ by the inverse function theorem.
\end{proof}

The behavior of conformal wave coordinates under conformal scalings and mappings is the same as for conformal harmonic coordinates. Especially statements of Propositions~\ref{equivalence} and~\ref{gauge_cond_of_Z} converted for conformal wave coordinates remain true. We also have
\[
 \Gamma(g)_a=2\p_a\log f,
\]
 where  $\mathcal{L}_gf=0$ in conformal wave coordinates. 
 In analogy to Corollary~\ref{Z_coords_conformally_invariant}, conformal wave coordinates for a Lorentzian metric $g$ are conformal wave coordinates for any conformal metric $cg$. 
We have the same formulas for curvature tensors in conformal wave coordinates for the determinant normalized metric $\hat{g}$:
 \begin{align*}
  R(\hat{g})&=T_1(\hat{g}) \\
  \mathcal{O}(\hat{g})_{ab}&=-\frac{1}{2(n-3)}\Box_{\hat{g}}^{n/2}\hat{g}+T_{n-1}(\hat{g}) \text{ if }n\geq 4 \\
  C(\hat{g})_{abc}&=-\frac{1}{2}\Box_{\hat{g}}(\widehat{\nabla}_a\hat{g}_{bc}-\widehat{\nabla}_b\hat{g}_{ac}) +T_2(\hat{g}) \text{ if }n\geq 3,
 \end{align*}
 where $a,b,c=1,\ldots,n$. The equation  
 \[
 \Delta_{\hat{g}}\Gamma_a(\hat{g})=T_2(\hat{g})
 \]
 is also satisfied in conformal wave coordinates. See Section~\ref{Bach_formulas} for the formulas.

A major difference of the Lorentzian setting to the Riemannian case is that the smoothness of conformal wave coordinates can vary. Not all conformal wave coordinates are $C^\infty$ smooth even if $g$ is $C^\infty$ smooth. This has consequences. To give an example, the argument we used to prove a regularity result for conformal mappings in the Riemannian setting fails in the Lorentzian setting; 
Using a conformal mapping to pull back smooth conformal wave coordinates does not produce conformal wave coordinates that are automatically smooth. We mention that regularity of conformal mappings in the Lorentzian setting was proven by Hawking~\cite{HKM76}.

We finish with a unique continuation result of conformal mappings in Lorentzian spacetimes.  
We defined in Section~\ref{reg_ucp_for_conf} before Theorem~\ref{ucp_for_conformal} what it means that two mappings on a manifold agree to second order on a subset of the manifold. 
We refer to~\cite{Ri09} on the standard definitions appearing in the result and in its proof.

\begin{Theorem}\label{ucp_for_conformal_Lorentzian}
 Let $(M,g)$ be a connected, oriented, time oriented, globally hyperbolic Lorentz manifold without boundary of dimension $n\geq 3$, and let $(N,h)$ be an $n$-dimensional Lorentz manifold without boundary. Let $S$ be a $C^\infty$ smooth spacelike Cauchy hypersurface in $M$. 
 Let $\Omega$ be a subset of $S$.
 
 Assume that $F_1$ and $F_2$ are two $C^\infty$ smooth locally diffeomorphic conformal mappings $M \to N$, 
 $F_1^*h=c_1 g$ and $F_2^*h=c_2 g$. Assume also that $F_1$ and $F_2$ agree to second order on $\Omega$. 
Then, for $p\in \mathcal{D}^+(\Omega)\cup \mathcal{D}^-(\Omega)$ holds 
 \[
  F_1(p)=F_2(p). 
 \]
 Here $\mathcal{D}^+(\Omega)$ and $\mathcal{D}^-(\Omega)$ are the future and past Cauchy developments of $\Omega$.
%
Especially if $\Omega=S$, then $F_1=F_2$ in $M$. 
\end{Theorem}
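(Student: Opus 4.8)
The plan is to transport the proof of Theorem~\ref{ucp_for_conformal} to the Lorentzian setting, replacing the elliptic unique continuation used there by uniqueness for the Cauchy problem (finite propagation speed) of the conformal wave operator $\mathcal{L}_g$, which is normally hyperbolic. First I would translate the hypothesis into matching Cauchy data. Since $c_j=g^{-1}F_j^*h$ is an algebraic expression in $F_j$ and its first derivatives, while $\nabla c_j$ involves derivatives of $F_j$ up to second order, the assumption that $F_1$ and $F_2$ agree to second order on $\Omega$ yields $c_1=c_2$ and $\nabla c_1=\nabla c_2$ on $\Omega$.

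Next comes the local construction. Fix $p_0\in\Omega$ and set $q_0=F_1(p_0)=F_2(p_0)$. By Proposition~\ref{Z-coord_lorentz} choose conformal wave coordinates $Z=(f^1/f,\ldots,f^n/f)$ near $q_0$ in $N$, so that $\mathcal{L}_hf^k=\mathcal{L}_hf=0$ with $f>0$. Writing $F_j^*(f^k/f)=(c_j^{(n-2)/4}F_j^*f^k)/(c_j^{(n-2)/4}F_j^*f)$ and repeating the computation~\eqref{scaled_pull_back_calc} with the conformal invariance of the conformal wave operator gives $\mathcal{L}_g(c_j^{(n-2)/4}F_j^*f^k)=\mathcal{L}_g(c_j^{(n-2)/4}F_j^*f)=0$. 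Denoting $w_j^k=c_j^{(n-2)/4}F_j^*f^k$ and $w_j=c_j^{(n-2)/4}F_j^*f$, these are solutions of the linear wave equation $\mathcal{L}_g\,\cdot=0$ near $p_0$, and by the first step $w_1^k,w_1$ agree with $w_2^k,w_2$ together with all their first derivatives on $\Omega$; in particular their Cauchy data on the spacelike hypersurface $S$ coincide on $\Omega$.

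Then I would propagate. By uniqueness for the Cauchy problem of $\mathcal{L}_g$, the differences $w_1^k-w_2^k$ and $w_1-w_2$, having vanishing Cauchy data on $\Omega$, vanish on the local Cauchy development; hence $F_1^*Z^k=w_1^k/w_1=w_2^k/w_2=F_2^*Z^k$, and since $Z$ is a local diffeomorphism, $F_1=Z^{-1}\circ F_1^*Z=Z^{-1}\circ F_2^*Z=F_2$ on an open neighborhood of the relative interior of $\Omega$. To reach every point of $\mathcal{D}^+(\Omega)\cup\mathcal{D}^-(\Omega)$ I would run a connectedness argument: the set of points admitting a neighborhood on which $F_1=F_2$ is open, and it is closed in the Cauchy development because at a boundary point $p$ with $q=F_1(p)=F_2(p)$ one sets up fresh conformal wave coordinates near $q$, selects a small spacelike hypersurface $\Sigma$ lying inside the already-established agreement region with $p$ in its local domain of dependence, and repeats the local uniqueness step. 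Taking $\Omega=S$ gives $\mathcal{D}^+(S)\cup\mathcal{D}^-(S)=M$, so $F_1=F_2$ on $M$.

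The \emph{main obstacle} is precisely this globalization. In the Riemannian case unique continuation is automatically global on connected sets, but here the pulled-back solutions $w_j^k$ are only defined where both $F_1$ and $F_2$ map into the domain of a single chart $Z$, so one cannot apply one global wave-uniqueness statement to a fixed function; instead the coordinate-free identity $F_1=F_2$ must be advanced step by step. The delicate point is to verify, at each limit point $p\in\mathcal{D}^{\pm}(\Omega)$, the existence of a spacelike surface contained in the current agreement region whose local Cauchy development contains $p$. This is secured by the definition of the Cauchy development, together with the fact that the agreement region is open and contains a neighborhood of $\Omega$, which permits pushing the initial surface slightly off $S$ into that region.
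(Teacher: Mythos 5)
Your proposal follows essentially the same route as the paper: pull back conformal wave coordinates through $F_1,F_2$, use the conformal invariance of $\mathcal{L}_g$ to turn the rescaled pullbacks $c_j^{(n-2)/4}F_j^*f^k$ into solutions of the wave equation with matching Cauchy data on $\Omega$, apply local uniqueness for the Cauchy problem, and advance the identity $F_1=F_2$ through the Cauchy development slice by slice. The globalization step you flag as the main obstacle is exactly what the paper carries out in detail, working inside the compact set $J^-(P)\cap\mathcal{D}^+(\Omega)$, slicing by a temporal function, covering each slice $R_{t_0}$ by finitely many charts, and invoking \cite[Lemma 12.10]{Ri09} — the continuity argument you sketch.
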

\begin{proof}
The proof is similar to that of Theorem~\ref{ucp_for_conformal}. However, the simple topological argument used there to show that the set where the claim holds is open and closed, and thus the whole manifold, is replaced here by an argument that uses that $M$ is globally hyperbolic. For the latter, we argue similarly as in the proof of~\cite[Corollary 12.12]{Ri09}. 
We show that $F_1=F_2$ on $\mathcal{D}^+(\Omega)$. The proof for $\mathcal{D}^-(\Omega)$ follows from an analogous argument.

It is sufficient to proof that $F_1=F_2$ holds in the interior of $\mathcal{D}^+(\Omega)$. This is because $\mathcal{D}(\Omega)^+\setminus \Omega\subset \overline{\text{Int}(\mathcal{D}(\Omega)^+)}$ by~\cite[Proof of Corollary 12.12]{Ri09}. Thus, if $F_1=F_2$ on $\text{Int}(\mathcal{D}^+(\Omega))$, then by the continuity of $F_1$ and $F_2$, it follows that $F_1=F_2$ on $\mathcal{D}(\Omega)^+\setminus \Omega$. By assumption $F_1|_\Omega=F_2|_\Omega$. 

Let $P\in \text{Int}(\mathcal{D}^+(\Omega))$. Then, by~\cite[Lemma 40, p.\,423]{On83}, the set 
\[
 K:=J^-(P)\cap \mathcal{D}^+(\Omega)
\]
is compact. Let $t$ be a temporal function such that all its level sets are spacelike Cauchy
hypersurfaces and $t^{-1}(0)=S$, see~\cite[Theorem 11.27]{Ri09}. 
Given an interval $I\subset \R$ and $t_0\in \R$, we set
\[
R_I=t^{-1}(I)\cap K, \quad S_{t_0}=t^{-1}(t_0) 
\]
and 
\[
      R_{t_0}=S_{t_0}\cap K.
\]
Define $T=t(P)>0$. We will prove that the set $I_B$ of $s\in [0,T)$, such that $F_1$ and $F_2$ agree to second order on $R_{[0,s]}$, is the set $[0,T)$. 
Let $t_0\in [0,T)$ and assume that $F_1=F_2$ agree to second order on $R_{t_0}$. 
Let $q\in R_{t_0}$ and let $Z$ be conformal wave coordinates on a neighborhood $\Omega_N$ of $F_1(q)=F_2(q)$. Let $\Omega_M=F_1^{-1}(\Omega_N)\cap F_2^{-1}(\Omega_N)$. 
We choose an open neighborhood $U_q$ of $q$ contained in $\Omega_M$,
\[
 q\in U_q\subset\Omega_M,
\]
such that if $z\in U_q$, which lies to the future of $S_{t_0}$, there are geodesic normal coordinates on $V$ centered at $z$ such that $J^{-}(z)\cap J^+(S_{t_0})$ is compact and contained in $V$. Such an open set $U_q$ can be found by modifying the argument used to prove of~\cite[Lemma 12.7]{Ri09} to apply for the Lorentz manifold $\Omega_M$. (The proof of~\cite[Lemma 12.7]{Ri09} only uses that a causal curve can intersect a Cauchy hypersurface only once. Causal curves in $\Omega_M$ can intersect $S_{t_0}$ only once.) 
The reason $U_q$ is chosen this way, is that we will apply~\cite[Lemma 12.10]{Ri09} later. 
 
 Since $R_{t_0}$ is compact, we find a finite open covering $\{U_{q_r}\}_{r=1}^L$ of $R_{t_0}$ with the following properties: For each $r\in \{1,\ldots,L\}$, we have that there are $q_r\in R_{t_0}$, an open neighborhood $U_{q_r}$ of $q_r$ and conformal wave coordinates on a neighborhood $\Omega_{N,r}\subset N$ of the point $F_1(q_r)=F_2(q_r)$ such that $q_r\in U_{q_r}\subset F_1^{-1}(\Omega_{N,r})\cap F_2^{-1}(\Omega_{N,r})$. 
Let us denote $U=\cup_{r=1}^L U_{q_r}$. We have that there is $\eps>0$ such that $R_{[t_0,t_0+\eps]} \subset U$, see~\cite[Proof of Corollary 12.12]{Ri09}, if $R_{t_0}\subset U$ and $R_{t_0}$ non-empty.


Let $z\in R_s$ for some $s\in [t_0,t_0+\eps]$. By the construction of $U$, we have that $z\in U_{q_r}$ for some $r\in \{1,\ldots,L\}$. 
Let $Z$ be the conformal wave coordinates on $\Omega_{N,r}$ as described above. 
Thus $Z=(Z^1,\ldots,Z^n)=(f^1/f,\ldots,f^n/f)$ and we have on $U_{q_r}$ that 
%
\[
  F_j^*\left(\frac{f^k}{f}\right)=\frac{F_j^*f^k}{F_j^*f}=\frac{c_j^{(n-2)/4}F_j^*f^k}{c_j^{(n-2)/4}F_j^*f}.
\]
As in the proof of Theorem~\ref{ucp_for_conformal}, we have on $U_{q_r}$ that
\[
\mathcal{L}_g\big(c_j^{(n-2)/4}F_j^*f^k\big)=\mathcal{L}_g\big(c_j^{(n-2)/4}F_j^*f)=0,
 \]
for $j=1,2$ and $k=1,\ldots,n$. 
 %
 Since $F_1$ and $F_2$ agree to second order on $R_{t_0}$, we have that
\[
 c_1|_{R_{t_0}}=c_2|_{R_{t_0}} \text{ and } \nabla c_1|_{R_{t_0}}=\nabla c_2|_{R_{t_0}},
\]
since $c_j=g^{-1}F_j^*h$ for $j=1,2$. It follows that the values and gradients of the functions $c_1^{(n-2)/4}F_1^*f^k$ and $c_2^{(n-2)/4}F_2^*f^k$ agree on $R_{t_0}$. Note that $J^{-}(z)\cap S_{t_0}\subset R_{t_0}$: Let $w\in J^-(z)\cap S_{t_0}$. Since $w\in J^{-}(z)$ and $z\in \mathcal{D}^+(\Omega)$, we have that $w\in \mathcal{D}^+(\Omega)$.  
Since $w\in J^{-}(z)$ and $z\in J^{-}(P)$, we have that $w\in J^{-}(P)$. Thus $w\in S_{t_0}\cap J^-(P)\cap \mathcal{D}^+(\Omega)=R_{t_0}$.
 
 By the above and by~\cite[Lemma 12.10]{Ri09} we have that 
 \[
  c_1^{(n-2)/4}F_1^*f^k=c_2^{(n-2)/4}F_2^*f^k \text{ on } J^{-}(z)\cap J^+(S_{t_0}).
 \]
We have similarly $c_1^{(n-2)/4}F_1^*f=c_2^{(n-2)/4}F_2^*f$ on $J^{-}(z)\cap J^+(S_{t_0})$. 
By denoting $W_j=F_j^*Z$, $j=1,2$, we have that
\[
 F_1=Z^{-1}\circ W_1=Z^{-1}\circ W_2=F_2 
\]
on $J^{-}(z)\cap J^+(S_{t_0})$. 
Thus $F_1$ and $F_2$ agree to second order at $z$. Since $s\in [t_0,t_0+\eps]$ and $z\in R_s$ were arbitrary, we have that $F_1$ and $F_2$ agree to second order on $R_{[t_0,t_0+\eps]}$. 

The set $I_B$ of $s\in [0,T)$ such that $F_1$ and $F_2$ agree to second order on $R_{[0,s]}$ is non-empty by assumption, closed by $C^2$ smoothness of $F_1$ and $F_2$ and open by the argument above. 
Thus $I_B=[0,T)$ and we have that 
\[
 F_1(P)=F_2(P).
\]
Thus $F_1=F_2$ on $\mathcal{D}^+(\Omega)$.  If $\Omega=S$, then $\mathcal{D}(\Omega)=M$. \qedhere

\end{proof}

\appendix
\section{List of formulas in Riemannian geometry}\label{list_of_formulas}

We list formulas in Riemannian geometry.
The Christoffel symbols are given by 
\[
\Gamma_{ab}^c = \frac{1}{2} g^{cd}(\partial_a g_{bd} + \partial_b g_{ad} - \partial_d g_{ab}).
\]
Contracted Christoffel symbols are defined as 
\[
\Gamma^a = g^{bc} \Gamma_{bc}^a, \qquad \Gamma_a = g_{ab} \Gamma^b.
\]
Noting the identity $g^{bc} \partial_a g_{bc} = \partial_a(\log\abs{g})$, we see that 
\begin{equation}\label{stnd_contracted}
\Gamma^a = -\partial_b g^{ab} - \frac{1}{2} g^{ab} \partial_b(\log\abs{g}), \qquad 
\Gamma_a = g^{bc} \partial_b g_{ac} - \frac{1}{2} \partial_a(\log\abs{g}). 
\end{equation}
This also implies that 
\begin{equation}\label{gamma_contracted_log}
\Gamma_{ba}^b = \frac{1}{2} \partial_a(\log\abs{g}).
\end{equation}
The Ricci tensor, $R_{bc} = R_{abc}^{\phantom{abc}a}$, where $R_{abc}^{\phantom{abc}d}$ is the Riemann curvature tensor, is given by 
\begin{align}\label{coordinate_formula_for_Ricci}
R_{ab} &= \partial_c \Gamma_{ab}^c - \partial_a \Gamma_{cb}^c + \Gamma_{ab}^c \Gamma_{dc}^d - \Gamma_{cb}^d \Gamma_{ad}^c \nonumber\\
 &= \partial_c \Gamma_{ab}^c - \frac{1}{2} \partial_{ab}(\log\abs{g}) - \Gamma_{cb}^d \Gamma_{ad}^c  + \frac{1}{2} \Gamma_{ab}^c \partial_c(\log\abs{g}). 
\end{align}
The latter equality follows from~\eqref{gamma_contracted_log}. The Ricci curvature can also be written as
\begin{equation}\label{Ricci_deturck_form}
 R_{ab}=-\frac{1}{2}\Delta g_{ab}+\frac{1}{2}\left(\p_a\Gamma_b+\p_b\Gamma_a\right) + \text{ lower order terms.}
\end{equation}
See e.g.~\cite[Lemma 4.1]{DK}.


The conformal Laplacian transforms under a conformal scaling of the metric $g$ by a positive function $c$ as
\begin{equation}\label{conformal_invariance_of_L2}
 L_{cg} u = c^{-\frac{n+2}{4}} L_g (c^{\frac{n-2}{4}} u).
\end{equation}
By defining
\[
p=\frac{2n}{n-2},
\]
then the conformal invariance can also be written as
\begin{equation}\label{conformal_invariance_of_L}
L_{c^{p-2}g}(c^{-1}u)=c^{1-p}L_gu.
\end{equation}
The scalar curvature transforms under a conformal scaling by $c$ as
\begin{equation}\label{scalar_curvature_scaled2}
 R(c^{p-2}g)=c^{1-p}\left(4\frac{n-1}{n-2}\Delta_gc+R(g)c\right)=4\frac{n-1}{n-2}c^{1-p}L_gc=0.
 \end{equation}


\section{Construction of local solutions for the conformal Laplace equation}\label{local_solutions}
We construct local solutions to $L_gu=0$ with some prescribed properties at a given point. The first part of the lemma below can also be found from~\cite[p. 228, Theorem 1]{BJS}. The proofs of both parts of the lemma are based on the fact that the maximum principle holds for $L_g$ in small enough domains and on a relatively standard scaling argument. 

\begin{Lemma}\label{prescribed_sols}
Let $\Omega\subset\R^n$ be a neighborhood of the origin.

{\bf (1)} Let $g\in C^r(\ol{\Omega})$, $r>2$, be a Riemannian metric on $\ol{\Omega}\subset \R^n$. Let $\sigma\in \R^n$. Then there exists $\eps>0$ 
and a function $f^\sigma\in C^{r}(\overline{B}(0,\eps))$ that solves
\[
 L_{g}f^\sigma=0 \mbox{ on } \overline{B}(0,\eps),
\]
with
\[
 df^\sigma(0)=\sigma \mbox{ and } f^\sigma(0)=0.
\]
Also, there is a function $f\in C^r(\overline{B}(0,\eps))$ such that $L_gf=0$ and $f>0$ on $\overline{B}(0,\eps)$ with $f(0)=1$.

{\bf (2)}  Let $g \in C^r$, $r>2$, be a Riemannian metric on the set $\ol{\Omega}\cap\{x\in \R^n: x^n\geq 0\}$. Let $D=\overline{B}(0,1)\cap \{x\in \R^n: x^n\geq 0\}\subset \R^n$ and $\Gamma=D\cap \{x\in \R^n: x^n=0\}$, and set $D_\eps=\{\eps x: x\in D\}$ and $\Gamma_\eps=D_\eps\cap \{x\in \R^n: x^n=0 \}$. There exists $\eps>0$ and a function $f^n\in C^{r}(\overline{D}_\eps)$ that solves
\begin{align*}
 L_{g}f^n&=0, \mbox{ in } D_\eps, \\
 f^n &= 0, \mbox{ on } \Gamma_\eps,
\end{align*}
with
\[
 df^n(0)=dx^n.
\]
Also, there is a function $f\in C^r(\overline{D}_\eps)$ such that $L_gf=0$ and $f>0$ on $\overline{D}_\eps$ and $f=1$ on $\Gamma_\eps$. 
\end{Lemma}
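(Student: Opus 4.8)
The plan is to build all the required solutions by solving Dirichlet problems for $L_g$ on a small ball (resp. half-ball) and then correcting the $1$-jet at the origin by a finite-dimensional linear-algebra argument. The two ingredients are unique solvability of the Dirichlet problem on a small domain, and a scaling argument identifying the blow-up limit with the frozen constant-coefficient Laplacian $\Delta_{g(0)}$, for which affine functions are exact solutions. For the first ingredient, note that on $B(0,\eps)$ with $\eps$ small the operator $L_g=\Delta_g+\frac{n-2}{4(n-1)}R(g)$ still satisfies the maximum principle even though the zeroth-order coefficient $\frac{n-2}{4(n-1)}R(g)$ may be positive: the bottom of the Dirichlet spectrum of $\Delta_g$ on $B(0,\eps)$ grows like $\eps^{-2}$ and hence dominates the bounded zeroth-order term, so the homogeneous problem has only the trivial solution and, by the Fredholm alternative, $L_gu=0$ in $B(0,\eps)$ with $u=\phi$ on $\p B(0,\eps)$ is uniquely solvable. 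Schauder theory then gives $u\in C^r(\ol{B}(0,\eps))$ for smooth $\phi$, the regularity being limited by the $C^{r-2}$ zeroth-order coefficient and thus yielding exactly $C^r$.

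For part (1), I would introduce, for $(b_0,b)\in\R\times\R^n$, the solution $u^{b_0,b}$ with affine boundary data $\phi_{b_0,b}(x)=b_0+b\cdot x$, together with the linear jet-map $J_\eps(b_0,b)=\big(u^{b_0,b}(0),\nabla u^{b_0,b}(0)\big)\in\R^{n+1}$. Rescaling $w(y)=u^{b_0,b}(\eps y)$ on $B(0,1)$, the rescaled operator converges to $\Delta_{g(0)}$ since the $R(g)$ term acquires a factor $\eps^2$; writing $w=b_0+\eps v$, the function $v$ solves an equation with an $O(\eps)$ right-hand side and boundary data $b\cdot y$, so $v\to b\cdot y$ in $C^1_{\mathrm{loc}}$. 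Consequently $u^{b_0,b}(0)=b_0+\eps v(0)\to b_0$ and $\nabla u^{b_0,b}(0)=\nabla_y v(0)\to b$, so $J_\eps\to\mathrm{Id}$ uniformly in $(b_0,b)$ on the unit sphere, hence $J_\eps$ is invertible for small $\eps$. Solving $J_\eps(b_0^*,b^*)=(0,\sigma)$ produces $f^\sigma:=u^{b_0^*,b^*}$ with $f^\sigma(0)=0$ and $df^\sigma(0)=\sigma$. For the positive function, I would solve $L_gf=0$ with $f=1$ on $\p B(0,\eps)$; the same rescaling shows the solution tends to the constant $1$, so $f>0$ on $\ol{B}(0,\eps)$ and $f(0)>0$ for small $\eps$, and replacing $f$ by $f/f(0)$ normalizes $f(0)=1$ while keeping positivity.

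For part (2) the half-ball $D_\eps$ replaces the ball. Solving $L_gu=0$ in $D_\eps$ with data $t\,x^n$ on $\p D_\eps$ (which automatically vanishes on the flat face $\Gamma_\eps$) yields $f^n:=u$ with $f^n=0$ on $\Gamma_\eps$; because $f^n$ vanishes along $\Gamma_\eps$, a neighborhood of the origin in $\{x^n=0\}$, the tangential derivatives $\p_i f^n(0)$, $i<n$, vanish automatically, so only the scalar $\p_n f^n(0)$ must be adjusted. This quantity is linear in $t$, and the same rescaling (affine $y^n$ being $\Delta_{g(0)}$-harmonic) gives $\p_n u(0)\to t$; inverting this $1\times1$ map for small $\eps$ fixes $\p_n f^n(0)=1$, i.e. $df^n(0)=dx^n$. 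The positive function is obtained by solving with data $1$ on $\p D_\eps$, which gives $f=1$ on $\Gamma_\eps$ and $f>0$ for small $\eps$.

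The main obstacle is the solvability step: establishing the maximum principle, and hence unique solvability, for $L_g$ on small domains when $R(g)$ can be positive; once this is secured the scaling limit $J_\eps\to\mathrm{Id}$ and the exact prescription of the jet are routine. A secondary technical point in part (2) is regularity up to the corner of $D_\eps$, which is handled by the compatibility of the boundary data (it vanishes both on $\Gamma_\eps$ and at the corner) together with boundary Schauder estimates along the flat face $\Gamma_\eps$, shrinking $\eps$ if necessary.
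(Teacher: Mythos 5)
Your proposal is correct, and its skeleton (unique solvability of the Dirichlet problem on small domains, a blow-up/scaling argument showing the solution with affine data has the affine $1$-jet at the origin up to an $O(\eps)$ error, and a finite-dimensional perturbation-of-the-identity step to hit the prescribed jet exactly) is the same as the paper's. The one genuine methodological difference is how the zeroth-order term of $L_g$ is handled. The paper first produces the positive solution $f$ and conformally rescales the metric to $\tilde g=f^{p-2}g$, so that $L_{\tilde g}=\Delta_{\tilde g}$ has no zeroth-order term; this makes the Dirichlet problem solvable on every subdomain where $f>0$ by \cite[Theorem 8.9]{GT}, lets one normalize the value at the origin simply by adding a constant to $u_\eps$, and reduces the jet-matching to an $n$-dimensional linear-algebra step on the differentials $df^k(0)$. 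You instead work directly with $L_g$, justify unique solvability on $B(0,\eps)$ by the $\eps^{-2}$ growth of the principal Dirichlet eigenvalue (equivalent in substance to the paper's appeal to the maximum principle of \cite[Proposition 1.1]{BNV94} plus the Fredholm alternative), and correctly compensate for the fact that constants are no longer solutions by enlarging the jet map to the $(n+1)\times(n+1)$ map $J_\eps(b_0,b)=(u^{b_0,b}(0),\nabla u^{b_0,b}(0))$ and inverting it for small $\eps$. Both routes are sound; the paper's rescaling is slightly cleaner here and is reused throughout the paper, while yours is more self-contained. In part (2) the only delicate point is regularity at the edge of the half-ball: the paper sidesteps it by solving on a convex $C^{1,\alpha}$ smoothing $D'$ of $D$ and then restricting to a strictly smaller half-ball whose edge lies in the interior of the flat face, where boundary Schauder estimates apply; your ``shrink $\eps$'' remark achieves the same thing provided you make explicit that the final half-ball is compactly contained (relative to the flat face) in the domain on which you solved, so that no estimate is ever needed at the edge of the solution domain itself.
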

\begin{proof}
{\bf(1)} 
 The conformal Laplace equation for a function $u\in C^2$ reads
 \begin{equation}\label{cLaplace_in_coords}
 -\frac{1}{|g|^{1/2}}\p_a\left(|g|^{1/2}g^{ab}\p_bu\right)+R(g)u=0,
 \end{equation}
 where $R(g)$ is a polynomial of the components of $g$ and $g^{-1}$ and derivatives of the components of $g$ up to order $2$. Since $r>2$, the equation~\ref{cLaplace_in_coords} is an elliptic equation for $u$ whose coefficients are in the H\"older class $C^{r-2}$ with $r-2>0$. 

We first find a function $f$ that satisfies 
 \[
  L_gf=0, \text{ with } f(0)=1
 \]
 on a neighborhood $\Omega\subset \R^n$ of the origin. Such a function $f$ can be found by first considering a small enough neighborhood of the origin so that the maximum principle holds for $L_g$, see e.g.~\cite[Proposition 1.1]{BNV94}. After that one may apply the Fredholm alternative, see e.g.~\cite[Theorem 6.15]{GT}, to solve a Dirichlet problem with a positive boundary value. By scaling one may fix $f(0)=1$. (Alternatively, see~\cite[p.228, Thm. 1]{BJS} for the existence.) The solution $f$ is $C^{r}$ regular. Let $\delta>0$ so that $f>0$ on $\overline{B}(0,\delta)\subset \Omega$.  We scale the metric as
 \[
  \tilde{g}=f^{p-2}g\in C^r.
 \]
 Then we have that
 \[
  L_{\tilde{g}}=\Delta_{\tilde{g}}
 \]
 since $R(\tilde{g})=0$ by the equation 
  \begin{equation*}
 R(\tilde{g})=f^{1-p}\left(4\frac{n-1}{n-2}\Delta_gf+R(g)f\right)=4\frac{n-1}{(n-2)}f^{1-p}L_gf=0.
 \end{equation*}

 Let $k\in \{1,\ldots,n\}$ and let $x^k$ be the corresponding coordinate function. Since the operator $L_{\tilde{g}}$, which is defined for functions on $\overline{B}(0,\delta)$, has no zeroth order term, the Dirichlet problem for $L_{\tilde{g}}$ on any $\overline{B}(0,\eps)$ is uniquely solvable for any $C^1$ boundary value function for any $\eps\in (0,\delta]$. We refer to~\cite[Theorem 8.9]{GT} for this standard result. 
 For $\eps\in (0,\delta]$, we solve the Dirichlet problem
\begin{align}
 L_{\tg}u_\eps&=0, \mbox{ on } B(0,\eps), \\
 u_\eps &=x^k \mbox{ on } \p B(0,\eps) \nonumber.
\end{align}

Define a family of functions $\tilde{u}_\eps$, $\eps\in (0,\delta]$ on the domain $\overline{B}(0,1)$ by
\[
 \tilde{u}_\eps(x)=\frac{1}{\eps}u_\eps(\eps x).
\]
We have that $\tilde{u}_\eps$ solves the equation
\begin{align}\label{scaled_solution}
 - \tg^{jk}&(\eps x)\p_{jk}\tilde{u}_\eps(x)+\eps \tilde{\Gamma}^l(\eps x)\p_l\tilde{u}_\eps(x) \\
 &=-\frac{\eps^2}{\eps} (\tg^{jk}\p_j\p_k u_\eps)(\eps x)+\frac{\eps^2}{\eps} (\tilde{\Gamma}^l\p_l u_\eps)(\eps x)=\eps (\Delta_{\tg}u_\eps)(\eps x)=0 \nonumber
\end{align}
on the fixed domain $\overline{B}(0,1)$. Here $\tilde{\Gamma}^l=\tg^{ab}\tilde{\Gamma}_{ab}^l$, where $\tilde{\Gamma}_{ab}^l$ are the Christoffel symbols with respect to the scaled metric $\tg\in C^r$.

Define a family $L^\eps$, $\eps\in (0,\delta]$, of operators acting on functions on $\overline{B}(0,1)$ by
\[
 L^\eps v:=-\tg^{jk}(\eps x)\p_{jk}v(x)
 +\eps \tilde{\Gamma}^l(\eps x)\p_lv.
\]
Let us also write
\[
 \tilde{u}_\eps(x)=x^k+w_\eps(x).
\]
If $x\in \p B(0,1)$, we have that $\tilde{u}_\eps(x)=u_\eps(\eps x)/\eps=(\eps x)^k/\eps=x^k$. Using this and the equation~\ref{scaled_solution} we have that $w_\eps$ solves
\begin{align*}
 L^\eps w_\eps(x)&=-L^\eps x^k=-\eps \tilde{\Gamma}^k(\eps x), \quad x\in B(0,1) \\
 w_\eps(x)&=0, \quad \quad\quad\quad\quad\quad\quad\quad\quad x\in \p B(0,1).
\end{align*}

Let us denote
\[
 r_\eps(x)=-\eps \tilde{\Gamma}^k(\eps x)
\]
Since the operator $L^\eps$ has no zeroth order term we have by ~\cite[Theorem 8.16]{GT} (by taking $\lambda=1$, $\nu^2=\eps \sum_l\abs{\tilde{\Gamma^l}}^2_\infty$, $g=r_\eps$) that
\[
 \sup_{\overline{B}(0,1)}\abs{w_\eps}\leq \sup_{\p B(0,1)}\abs{w_\eps}+C_1\norm{r_\eps}_{C^{0}(\overline{B}(0,1))}=C_1\norm{r_\eps}_{C^{0}(\overline{B}(0,1))}.
\]
Elliptic estimates~\cite[Theorem 8.33]{GT} give for $\alpha\in (0,1)$ that 
\[
 \norm{w_\eps}_{C^{1,\alpha}(\overline{B}(0,1))}\leq C_2(\norm{w_\eps}_{C^{0}(\overline{B}(0,1))}+\norm{r_\eps}_{C^{0}(\overline{B}(0,1))}).
 \]
 The constants $C_1$ and $C_2$ are independent of $\eps$.
Consequently we have that
\begin{equation}\label{correction_small}
 \norm{w_\eps}_{C^{1,\alpha}(\overline{B}(0,1))}\leq C_2(C_1+1) \norm{r_\eps}_{C^{0}(\overline{B}(0,1))}\leq C_3\eps,
\end{equation}
where $C_3$ is independent of $\eps$.

We have
\[
 du_\eps(x)=d(\eps \tilde{u}_{\eps}(x/\eps))=(d\tilde{u}_\eps)(x/\eps)
\]
Combining this with~\eqref{correction_small}, and since $dx^k$ is constant, we have the estimate 
\begin{equation}\label{arbitrary_close}
 \norm{dx^k-du_\eps}_{C^\alpha{(\overline{B}(0,\eps))}}=\norm{dx^k-d\tilde{u}_\eps}_{C^\alpha{(\overline{B}(0,1))}}<C_3\eps.
\end{equation}
By adding a constant to $u_\eps$, we may redefine $u_\eps$ so that $u_\eps(0)=0$ while still having $L_{\tilde{g}}u_\eps=0$ and the estimate~\eqref{arbitrary_close}. 

Let us define 
\[
 f_\eps^k:=fu_\eps.
\]
Then we have on $\overline{B}(0,\eps)$ that 
\[
 0=L_{\tilde{g}}u_\eps=f^{1-p}L_g(fu_\eps)=f^{1-p}L_gf_\eps^k
\]
and
\[
 df_\eps^k(0)=df(0)u_\eps(0)+f(0)du_\eps(0)=du_\eps(0)=dx^k+\mathcal{O}_{C^\alpha}(\eps).
\]
We also have $f_\eps^k(0)=0$. 
Since $k$ and $\eps$ were arbitrary it follows that on a small enough ball $B(0,\eps)$, $\eps>0$, we may approximate any coordinate function $x^k$ by a solution to the equation
\[
 L_gf^k=0
\]
to a small error in $C^1(\overline{B}(0,\eps))$. Let $\sigma\in \R^n$. It follows by the linearity of $L_g$ that we may find a small neighborhood of the origin so that
 \begin{equation}
 L_gf^\sigma=0, \text{ with } f^\sigma(0)=0, \quad df^\sigma(0)=\sigma.
 \end{equation}
We have proven the first part of the lemma.

{\bf (2)} The proof of the second part of the lemma is almost identical to the proof of the first part. We only point out the differences. 

Let $D=\overline{B}(1,0)\cap \{x\in \R^n: x^n\geq 0\}\subset \R^n$. It will be convenient to work with $C^{1,\alpha}$ smooth domains. For this, let $D'\subset D$ be a convex $C^{1,\alpha}$ smooth closed domain, $\alpha\in (0,1)$, such that $D'$ contains the half-ball $B(\gamma,0)\cap \{x\in \R^n: x^n\geq 0\}$, for some $\gamma>0$. (Consider $D'$ to be $D$ with suitably rounded edges.) 
Let us denote 
$D_\eps'=\{\eps x: x\in D'\}\subset \R^n$ and $\Gamma_\eps':=D_\eps'\cap \{x^n=0\}$ for $\eps>0$ small. 

There is $\gamma>\delta>0$ such that there exists a positive function $f$ that solves 
\begin{align*}
 L_gf&=0 \text{ on } D_\delta' \\
 f&=1 \text{ on } \Gamma_\delta'
\end{align*}
The existence of $f$ follows from the fact that the maximum principle holds for $L_g$ on small enough domains~\cite[Proposition 1.1]{BNV94} and by the Fredholm alternative~\cite[Theorem 6.15]{GT}. Define $\tilde{g}=f^{p-2}g$. Since $L_{\tilde{g}}$ has no zeroth order term on $D_\delta$, the family of Dirichlet problems
\begin{align}
 L_{\tg}u_\eps&=0, \mbox{ on } D_\eps', \\
 u_\eps &=x^n \mbox{ on } \p D_\eps' \nonumber.
\end{align}
has a unique solution for $\eps\in (0,\delta]$. The scaling argument of part (1) can be used also here since $D'$ is convex and thus star-shaped with respect to origin.  The scaling argument together with~\cite[Theorem 8.33]{GT} (here we use that $D_\eps'$ is $C^{1,\alpha}$) shows that 
\[
\norm{dx^n-du_\eps}_{C^\alpha{(\overline{D}'_\eps)}}<C\eps.
\]

The functions   
\[
 f_\eps^n:=fu_\eps
\]
parameterized by $\eps$ defined on $\ol{D}_{\eps}'$ satisfy
\[
 L_gf_\eps^n=0 \text{ and } f_\eps^n|_{\Gamma_\eps'}=0.
\]
Since $f^n_\eps$ is zero on the part $\Gamma_\eps'$ of the boundary of $D_\eps'$, we have that $df_\eps^n(0)=\sum_a (\p_af_\eps^n(0))dx^a=(\p_nf_\eps^n(0)) dx^n$ (no summation over $n$ here). It follows that
\begin{align*}
 df_\eps^n(0)&=(\p_nf_\eps^n(0)) dx^n=(f(0)\p_nu_\eps(0)+u_\eps(0)\p_nf(0))dx^n\\
 &=\p_nu_\eps(0)dx^n=(1+\mathcal{O}_{C^\a}(\eps))dx^n.
\end{align*}
Thus for $\eps$ small enough we have by scaling by a constant that there is a function such that $L_gf^n=0$ on $D_\eps'$, with $f^n=0$ on $\Gamma_\eps'$, and which satisfies 
\[
 df^n(0)=dx^n. \qedhere
\]

Since $B(\gamma,0)\cap \{x\in \R^n: x^n\geq 0\}\subset D'$ and because $D'$ is convex, we have that $D_{\gamma\eps}=B(\eps\gamma,0)\cap \{x\in \R^n: x^n\geq 0\}\subset D'_\eps$. Redefining $\eps$ as $\gamma\eps$ proves the claim. 
\end{proof}

\bibliographystyle{alpha}

\end{document}